\documentclass[12pt]{amsart}
\usepackage{amsmath,amsfonts,amssymb,amsthm}
\usepackage{graphicx,color}
\usepackage{hyperref}

\usepackage{cleveref}

\DeclareMathRadical{\sqrtsign}{symbols}{"70}{largesymbols}{"70}

\newlength{\figboxwidth}             
\newtheorem{theorem}{Theorem}[section]

\newtheorem{lemma}[theorem]{Lemma}

\theoremstyle{definition}

\theoremstyle{remark}

\theoremstyle{plain}

\theoremstyle{plain}
\newtheorem{corollary}{Corollary}

\numberwithin{equation}{section}

\newcommand{\abs}[1]{\left\lvert#1\right\rvert}

\def\Diff{\operatorname{Diff}}

\newcommand{\N}{\mathbb{N}}
\newcommand{\Z}{\mathbb{Z}}
\newcommand{\R}{\mathbb{R}}
\newcommand{\Q}{\mathbb{Q}}

\begin{document}
	
	\title[Second Lagrange spectra]
	{On the geometry of the second Lagrange spectra}

	\author{Hao Cheng}
	\address{IMPA, Estrada Dona Castorina 110, 22460-320, Rio de Janeiro, Brazil}
	\email{hao.cheng@impa.br}
	\thanks{The first author was partially supported by CNPq.}

	\author{Harold Erazo}
	\address{IMPA, Estrada Dona Castorina 110, 22460-320, Rio de Janeiro, Brazil}
	\email{harold.eraz@gmail.com}
	\thanks{The second author was partially supported by CAPES and FAPERJ}

	\author{Carlos Gustavo Moreira}
	\address{Carlos Gustavo Moreira: SUSTech International Center for Mathematics, Shenzhen, Guangdong, People’s Republic of China and IMPA, Estrada Dona Castorina 110, 22460-320, Rio de Janeiro, Brazil
	}
	\email{gugu@impa.br}
	\thanks{The third author was partially supported by CNPq and FAPERJ}
	
	\author{Thiago  Vasconcelos}
	\address{IMPA, Estrada Dona Castorina 110, 22460-320, Rio de Janeiro, Brazil}
	\email{thiago.greco@impa.br}
	\thanks{The fourth author was partially supported by CNPq}

	\subjclass[2020]{11J70, 11J70, 28A78, 37D05}  

	\date{\today}

	\keywords{Fractal geometry, Dynamical Markov and Lagrange spectra, Regular Cantor sets, Hyperbolic Dynamics, Diophantine Approximation}
	
	\begin{abstract}
		The Lagrange spectrum $L$ is the set of finite values of the best approximation constants $k(\alpha)=\limsup_{|p|,|q|\to \infty}|q(q\alpha-p)|^{-1}$, where $\alpha\in \mathbb{R}\setminus \mathbb{Q}$. It is a classical result that the pairs $(p,q)$ attaining these approximation constants arise from the convergents $(p_n,q_n)$ of the continued fraction of $\alpha$. Consequently, $k(\alpha)=\limsup_{n\to\infty}|q_n(q_n\alpha-p_n)|^{-1}$. Moreira proved that the function $d(t)=HD(L\cap(-\infty,t))$ where $HD$ denotes Hausdorff dimension, is continuous. 
		
		Second Lagrange spectra are defined analogously to the classical Lagrange spectrum, but are associated with the problem of approximating an irrational number $\alpha$ by rational numbers $\frac{p}{q}$ that are not convergents of its continued fraction expansion. Two natural definitions arise depending on whether rational multiples $(p,q)=(kp_n,kq_n),k\geq 2$ which represent the same rational numbers as convergents, are allowed or excluded. Based on this distinction, Moshchevitin introduced two second Lagrange spectra, denoted $L_2$ and $L_2^*$. 
		
		We prove that the function $d_2(t)=HD(L_2\cap (-\infty,t))$ is continuous, whereas $d_2^*(t)=HD(L_2^*\cap (-\infty,t))$ is discontinuous and assumes only the values 0 and 1.
	\end{abstract}

	\maketitle
	
	\tableofcontents
	
	\section{Introduction and statement of the results}
	
	\subsection{Classical spectra}
	
	The classical Lagrange and Markov spectra are closed subsets of the real line related to Diophantine approximations. They arise naturally in the study of rational approximations of irrational numbers and of indefinite binary quadratic forms, respectively. More precisely, given an irrational number $\alpha$, let 
	$$k(\alpha):=\limsup_{\substack{|p|, |q|\to\infty \\ p, q\in\mathbb{Z}}}(q|q\alpha-p|)^{-1}$$
	be its best constant of Diophantine approximation. The set 
	$$L:=\{k(\alpha):\alpha\in\mathbb{R}-\mathbb{Q}, k(\alpha)<\infty\}$$ 
	consisting of all finite best constants of Diophantine approximations is the so-called \emph{Lagrange spectrum}. 
	
	The study of the Lagrange spectrum is closely related to the theory of continued fractions. This is because of the classical fact that the best approximations of an irrational number $\alpha$ must come from the convergents of its continued fraction. In other words, if $\frac{p_n}{q_n}$ are the convergents of the continued fraction of $\alpha$, then $$k(\alpha)=\limsup_{n\to \infty}(q_n|q_n\alpha-p_n|)^{-1}$$ Moreover, if $\alpha=[a_0;a_1,a_2\cdots]=a_0+\frac{1}{a_1+\frac{1}{a_2+\frac{1}{\ddots}}}$, is the continued fraction expansion of $\alpha$, then we have the more explicit formula $$k(\alpha)=\limsup_{n\to \infty}(\alpha_{n}+\beta_{n})$$
	
	where $\alpha_n=[a_n;a_{n+1},\cdots]$ and $\beta_n=[0;a_{n-1},a_{n-2}\cdots,a_1]$.

	Similarly, the \emph{Markov spectrum} 
	$$M:=\left\{\left(\inf\limits_{(x,y)\in\mathbb{Z}^2\setminus\{(0,0)\}} |q(x,y)|\right)^{-1} < \infty: q(x,y)=ax^2+bxy+cy^2, b^2-4ac=1\right\} $$
	consists of the reciprocal of the minimal values over non-trivial integer vectors $(x,y)\in\mathbb{Z}^2\setminus\{(0,0)\}$ of indefinite binary quadratic forms $q(x,y)$ with discriminant one. 
	
	Hurwitz showed that the minimum of $L$ is $\sqrt{5}.$ Markov \cite{Markoff1879, Markoff1880} improved the result by showing that 
	$$L\cap (-\infty, 3)=M\cap (-\infty, 3)=\left\{k_1=\sqrt{5}<k_2=2\sqrt{2}<k_3=\frac{\sqrt{221}}{5}<\dots\right\},$$
	where $k^2_n\in \mathbb{Q}$ for every $n\in \mathbb{N}$ and $k_n\to 3$ when $n\to \infty$. Hall \cite{Hall} showed that $L$ contains a half-line $[c,+\infty)$ for some $c>4$, and Freiman \cite{Freiman} determined the biggest such a half-line. We refer the reader to \cite{CF89, CDMLS} for more information regarding these classical spectra.
	
	Moreira in \cite{M3} proved several results on the geometry of the Markov and Lagrange spectra, for example, that the map $d:\mathbb{R} \rightarrow [0,1]$, given by
	$$
	d(t)=HD(L\cap(-\infty,t))= HD(M\cap(-\infty,t))
	$$
	is continuous, surjective and such that $d(3)=0$ and $d(\sqrt{12})=1$. Moreover, that
	$$d(t)=\min \{1,2D(t)\}$$
	where $D(t)=HD(k^{-1}(-\infty,t))=HD(k^{-1}(-\infty,t])$ is also a continuous function from $\mathbb{R}$ to $[0,1).$ Even more, he showed the limit
	$$\lim_{t\rightarrow \infty}HD(k^{-1}(t))=1.$$
	
	A crucial tool used in the proof of this theorem is the following formula for the Hausdorff dimension of the arithmetic sum of regular Cantor sets which was proved by Moreira in \cite{M2}:
	
	\begin{theorem}[Moreira]
		Let $K,K'$ be $C^2$ regular Cantor sets and suppose that $K$ is non essentially affine. Then, $HD(K+K')=\min\{1,HD(K)+HD(K')\}$.
	\end{theorem}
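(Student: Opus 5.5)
The upper bound is the easy half. $K+K'$ is the image of $K\times K'$ under the Lipschitz map $(x,y)\mapsto x+y$, and $K+K'\subset\R$. So $HD(K+K')\le\min\{1,HD(K\times K')\}$. For $C^2$ regular Cantor sets the Hausdorff and box dimensions agree; this follows from bounded distortion, which gives uniform comparability of the Markov pieces at every scale. Hence $HD(K\times K')=HD(K)+HD(K')$ and the upper bound follows.

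For the lower bound I will reduce to a Marstrand-type statement and then use the non-essential affinity of $K$ to remove the ``almost every'' in Marstrand's theorem. The first step is to pass to subsets.
\begin{itemize}
\item By removing Markov pieces, replace $K,K'$ by regular sub-Cantor sets of dimensions $d-\varepsilon, d'-\varepsilon$. This loses only $\varepsilon$.
\item Using bounded distortion and $C^2$ regularity, any composition $f$ of inverse branches of length $n$ satisfies
$$f(K)\approx \text{affine image of a limit geometry } k_\theta(K),$$
with error exponentially small relative to $|Df|$. The limit geometries $k_\theta$ are $C^2$ embeddings indexed by the limit of the itinerary.
\item Therefore, for inverse branches $f$ of $K$ and $g$ of $K'$, the set $K+K'$ contains $f(K)+g(K')$. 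Up to an affine change of coordinates, this is close in the $C^1$ sense to $k_\theta(K)+\lambda\,k'_{\theta'}(K')$, where $\lambda\approx Dg/Df$.
\end{itemize}

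Next I prove a uniform Marstrand statement for the family $\pi_\lambda(x,y)=k_\theta(x)+\lambda k'_{\theta'}(y)$ on $K\times K'$. Take the natural (Gibbs) measure $\mu\times\mu'$ on $K\times K'$, of dimension $d+d'$. The transversality estimate
$$\mathrm{Leb}\{\lambda\in I:\ |\pi_\lambda(p)-\pi_\lambda(q)|<r\}\lesssim r/|p-q|$$
holds uniformly over $\theta,\theta'$ in the compact space of limit geometries, because these are $C^1$-close families of embeddings. The energy argument then shows that for most $\lambda$ in any fixed interval $I$, either $\pi_\lambda(\mu\times\mu')$ has finite $s$-energy for $s<\min\{1,d+d'\}$, or (when $d+d'>1$) its image has positive measure.

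The essential point is to make this robust. I would show that the good set of parameters $(\lambda,\theta,\theta')$ has positive measure in a way that survives $C^1$-small perturbations, using a discretized, scale-$r$ version of the energy bound in place of an almost-everywhere statement.

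The main obstacle is showing that the ratios $Dg/Df$, together with the pairs of limit geometries, actually hit this good set. Here non-essential affinity is used. If $K$ is not essentially affine, the limit geometries $k_\theta$ are not all affine images of one another. Composing with branches of both $K$ and $K'$ then lets me realize ratios $\lambda$ and geometries $(\theta,\theta')$ that are dense in a set of positive measure in parameter space, in a ``recurrent'' way at every scale; this is Moreira's scale-recurrence mechanism. Combining this recurrence with the robust positive-measure good set gives some realized configuration $f(K)+g(K')$ whose dimension is at least $\min\{1,d+d'\}-\varepsilon$. Letting $\varepsilon\to0$ gives the lower bound. I expect that turning the Marstrand ``almost every $\lambda$'' into a statement stable under the specific countable set of ratios produced by the dynamics, via the recurrence and perturbation argument, is the delicate core of the proof.
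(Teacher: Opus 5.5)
The paper does not prove this theorem: it is quoted from Moreira \cite{M2} and used as a black box, so there is no in-paper argument to compare with. Your upper bound is complete and correct. The ingredients you name for the lower bound (limit geometries, a Marstrand-type projection estimate, recurrence coming from non-essential affinity) are the right ones, but the lower bound as written has a genuine gap.

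Your target conclusion, that some realized $f(K)+g(K')$ has dimension at least $\min\{1,d+d'\}-\varepsilon$, is not a reduction. $K+K'$ is a finite union of such sums of pieces, so this statement is equivalent to the lower bound you are trying to prove. Approximating $f(K)+g(K')$ by an affine copy of $k_\theta(K)+\lambda k'_{\theta'}(K')$ is only useful if it is exploited at every scale. At a single scale it gives nothing, because the Marstrand good set (finite $s$-energy of $\pi_\lambda(\mu\times\mu')$ for the exact limit geometries) is not stable under the approximation error. A $C^1$ error of relative size $\delta$ can move $|\pi(p)-\pi(q)|$ by $\delta|p-q|$, which is exactly what is needed to create the near-collisions that the energy controls. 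Your discretized scale-$r$ bound does survive errors $\ll r$, but that only controls a bounded window of scales below the one at which you renormalized.

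What is missing is the multi-scale mechanism that actually carries the proof. It has three parts.

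(i) A single-scale, hence perturbation-robust, counting lemma. For most configurations $(\theta,\theta',t)$ in a compact set $L$, the renormalized images of the $\approx\rho^{-(d+d')}$ pairs of pieces of size $\approx\rho$ should have few collisions, so that they occupy $\gtrsim\rho^{-(\min\{1,d+d'\}-\varepsilon)}$ essentially disjoint intervals.

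(ii) A recurrence statement valid for \emph{every} configuration in $L$: a definite proportion of these pairs renormalize back into the good part of $L$, with a margin that absorbs the distortion errors. In Moreira's argument this is where non-essential affinity enters, via the scale recurrence lemma of Moreira and Yoccoz. Your ``realized ratios dense in a set of positive measure'' is far too weak, since it says nothing about how many children return to the good set at each step.

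(iii) An induction over the scales $\rho,\rho^2,\rho^3,\dots$. Using a collision count (Cauchy--Schwarz) on the good children, it keeps at each node many pairs with disjoint images. This produces a Cantor subset of $K+K'$ whose dimension is read off from the branching number against the contraction $\rho$.

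You flag (i) and (ii) as the ``delicate core'' without stating or proving them, and (iii) does not appear, so the lower bound is not established.
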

	In this work, we will investigate the behaviour of the analogue of the dimension function $d(t)$ for variations of the Lagrange spectrum related to approximations of an irrational number $\alpha$ by rational numbers $\frac{p}{q}$ which are not convergents of the continued fraction of $\alpha$. They will be defined later.
	
	\subsection{Dynamical spectra}
	
	The Lagrange and Markov spectra have a dynamical characterizaton, which was first obtained by Perron.
	
	Denote by $[a_0,a_1,\dots]$ the continued fraction $a_0+\frac{1}{a_1+\frac{1}{\ddots}}$. Let $\Sigma=(\mathbb{N}_{>0})^{\mathbb{Z}}$ the space of bi-infinite sequences of positive integers, $\sigma:\Sigma\to\Sigma$ be the left-shift map $\sigma((a_n)_{n\in\mathbb{Z}}) = (a_{n+1})_{n\in\mathbb{Z}}$, and let $f:\Sigma\to\mathbb{R}$ be the function
	$$f((a_n)_{n\in\mathbb{Z}}) = \alpha_0+\beta_0^*=[a_0, a_1,\dots] + [0, a_{-1}, a_{-2},\dots].$$
	Then, 
	$$L=\left\{\limsup_{n\to\infty}f(\sigma^n(\underline{\theta})):\underline{\theta}\in\Sigma\right\} \quad \textrm{and} \quad M= \left\{\sup_{n\to\infty}f(\sigma^n(\underline{\theta})):\underline{\theta}\in\Sigma\right\}.$$
	It follows from these characterizations that $M$ and $L$ are closed subsets of $\mathbb R$ and that $L\subset M$.  
	
	This dynamical description of the spectra motivate the general definitions of dynamical Markov and Lagrange spectra. Let $\varphi:X\to X$ be a continuous map defined on a metric space $X$ and $f:X\to \mathbb{R}$ be a continuous function. Following the above characterization of the classical spectra, we define the functions
	\begin{eqnarray*}
		\ell_{\varphi,f}: X &\rightarrow& \mathbb{R} \\
		x &\mapsto& \ell_{\varphi,f}(x)=\limsup_{n\to \infty}f(\varphi^n(x)),
	\end{eqnarray*}
	\begin{eqnarray*}
		m_{\varphi,f}:X &\rightarrow& \mathbb{R} \\
		x &\mapsto& m_{\varphi,f}(x)=\sup_{n\to \infty}f(\varphi^n(x)),
	\end{eqnarray*} and the sets
	
	$$L_{\varphi,f,X}=\{\ell_{\varphi,f}(x):x\in X\}$$ and $$M_{\varphi,f,X}=\{m_{\varphi,f}(x):x\in X\}$$
	If $HD(X)$ denotes the Hausdorff dimension of $X$, we define
	\begin{equation}\label{f1}
		L(t)=L(\varphi,f,X)(t)=HD(L_{\varphi,f,X}\cap (-\infty,t))
	\end{equation}
	and
	\begin{equation}
		M(t)=M(\varphi,f,X)(t)=HD(M_{\varphi,f,X}\cap (-\infty,t)).
	\end{equation}
	
	The dynamical characterization of the classical Markov and Lagrange spectra can be also be done in a smooth setting via an invertible extension of the Gauss map first considered by Adler and Flatto \cite{AdlerFlatto}. 
	
	Define the map $$T(x,y)=\left(\frac{1}{x}-\left\lfloor \frac{1}{x}\right\rfloor, \frac{1}{\left\lfloor \frac{1}{x}\right\rfloor +y }\right)$$ in $(\mathbb{R}\setminus \mathbb{Q})^2$ and considering the height function $f(x,y)=\frac{1}{x}+y$, then the dynamical Markov and Lagrange spectra of $f$ associated to $T$ coincide with the classical spectra. Nakada, Ito and Tanaka showed in \cite{NIT} map $T$ admits the invariant density $d\mu=\frac{1}{(1+xy)^2}dxdy$.
	
	Defining the Cantor sets $C(k)=\{x=[0;a_1,a_2,\cdots]:a_n\leq k\}$ for $k\in \mathbb{N}_{>0}$, $C(k)\times C(k)$ is a conservative horseshoe for $T$. Moreira and Lima \cite{ML} proved that the dynamical Markov and Lagrange of $T$ with respect to this horseshoe coincide with $M\cap (-\infty,\sqrt{k^2+4k}]$ and $L\cap (-\infty,\sqrt{k^2+4k}]$, respectively, for $k\geq 4$. 
	
	Because of the existence of Hall's ray, this means that the nontrivial parts of the Markov and Lagrange spectra coincide with dynamical Markov and Lagrange spectra associated to conservative horseshoes. Having this in mind, Cerqueira, Matheus, Moreira \cite{CerqueiraMatheusGugu} and Lima, Moreira, Villamil \cite{LMV} proved an analogue of Moreira's theorem about the continuity of the dimension function $d(t)$ for dynamical Markov and Lagrange spectra associated to two-dimensional conservative horseshoes.
	
	Let $\Diff^{2}_{\omega}(S)$ denote the set of conservative diffeomorphisms of a surface $S$ with respect to a volume form $\omega$. 
	They study the Hausdorff dimension of the sets $L_{\varphi,f,\Lambda}\cap (-\infty,t)$ and $M_{\varphi,f,\Lambda}\cap (-\infty,t)$ by studying the dimension of the subsets, where $\varphi$ is a diffeomorphism on $S$, $f$ is a differentiable real function on $S$ and $\Lambda$ is a horseshoe associated to the diffeomorphism. Define the one parameter family of maximal invariant hyperbolic sets $$\Lambda_t:=\bigcap\limits_{n\in\mathbb{Z}}\varphi^{-n}(\{y\in\Lambda: f(y)\leq t\}) = \{x\in\Lambda: m_{\varphi, f}(x)=\sup\limits_{n\in\mathbb{Z}}f(\varphi^n(x))\leq t\}.$$ Now define the stable and unstable Cantor sets via the projections along unstable and stable manifolds respectively:
	$$K^u_t=\bigcup_{a\in \mathcal{A}} \pi^s_a(\Lambda_t\cap R_a) \ \mbox{and} \ K^s_t=\bigcup_{a\in \mathcal{A}}\pi^u_a(\Lambda_t\cap R_a).$$ 
	Their result is the following:
	
	\begin{theorem}[Lima-Moreira-Villamil]\label{main}
		Let $\varphi_0\in \Diff^{2}_{\omega}(S)$ with a horseshoe $\Lambda_0$ and $\mathcal{U}$ a $C^{2}$-sufficiently small neighbourhood of $\varphi_0$ in $\Diff^{2}_{\omega}(S)$ such that $\Lambda_0$ admits a continuation $\Lambda (= \Lambda(\varphi))$ for every $\varphi \in \mathcal{U}$. There exists a residual set $\tilde{\mathcal{U}}\subset \mathcal{U}$ such that for every $\varphi\in \tilde{\mathcal{U}}$ and $r\ge 2$ there exists a $C^r$-residual set $\tilde{\mathcal{R}}_{\varphi,\Lambda}\subset C^r(S,\mathbb{R})$ such that for $f\in \tilde{\mathcal{R}}_{\varphi,\Lambda}$  the functions:
		$$t\mapsto d_u(t):= HD(K^u_t) \ \mbox{and} \ t\mapsto d_s(t):=HD(K^s_t)$$
		are continuous and in fact, they are equal with
		$$HD(\Lambda_t)=d_u(t)+d_s(t)=2d_u(t)$$ and 
		$$\min\{1,HD(\Lambda_t)\}=L(t)=M(t).$$
	\end{theorem}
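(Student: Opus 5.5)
The plan is to split the statement into two parts: the continuity and symmetry of $d_u,d_s$, and the identity $\min\{1,HD(\Lambda_t)\}=L(t)=M(t)$. The residual set $\tilde{\mathcal U}$ will be chosen so that every $\varphi\in\tilde{\mathcal U}$ has subhorseshoes whose stable and unstable Cantor sets are non essentially affine. This is a Moreira--Yoccoz-type genericity condition, which fails only on a meagre set of $C^2$ conservative diffeomorphisms. It is what makes Moreira's formula $HD(K+K')=\min\{1,HD(K)+HD(K')\}$ applicable to every subhorseshoe.

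\textbf{Symmetry.} Since $\varphi$ preserves $\omega$, the stable and unstable contraction rates on any invariant hyperbolic set are related by the Jacobian being $1$. By Bowen--Manning--McCluskey, $HD(K^u)$ and $HD(K^s)$ of a horseshoe are the zeros of the pressures of $-\log\|D\varphi|_{E^u}\|$ and $\log\|D\varphi|_{E^s}\|$. These pressures coincide by conservativity, so $d_u(t)=d_s(t)$ for each $t$. Because $\Lambda_t$ is locally a product of its stable and unstable Cantor sets (via $C^{1}$ holonomies), we get $HD(\Lambda_t)=d_u(t)+d_s(t)=2d_u(t)$.

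\textbf{Continuity of $d_u$ (main obstacle).} Since $\Lambda_t$ is decreasing in $t$ and $d_u$ is monotone, upper semicontinuity from the right is easy: $\Lambda_{t}=\bigcap_{s>t}\Lambda_s$ and one uses uniform covers at a fixed Markov depth. The difficult part is lower semicontinuity, i.e.\ $d_u(t-\delta)$ close to $d_u(t)$ for small $\delta$. Here I would use the residual set $\tilde{\mathcal R}_{\varphi,\Lambda}$, defined by requiring that $\nabla f$ is not parallel to $E^s$ or $E^u$ at points of $\Lambda$, together with finitely many transversality conditions on periodic orbits. Given $\eta>0$, the aim is to find a subhorseshoe $\tilde\Lambda\subset\Lambda_t$ with $HD(\tilde\Lambda)>HD(\Lambda_t)-\eta$ and $\max_{\tilde\Lambda}f<t$.

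The approach for this is combinatorial. Code $\Lambda_t$ by long words. Points where $f$ is close to $t$ are controlled: by the transversality of $\nabla f$, the set of long words that come within $\delta$ of level $t$ carries a small proportion of the entropy. Removing those words costs little dimension, and one then re-glues the remaining words with fixed connecting words to obtain a genuine horseshoe. I would first try to make this work through a counting argument on the number of admissible words of length $n$. This is the core difficulty, and it is where both the genericity of $f$ and the conservativity of $\varphi$ are used.

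\textbf{Spectra.} For the upper bound, $L_{\varphi,f,\Lambda}\cap(-\infty,t)\subset M_{\varphi,f,\Lambda}\cap(-\infty,t)\subset f(\Lambda_{t})$. The set $f(\Lambda_t)$ is a Lipschitz image of $\Lambda_t$, so $M(t)\le\min\{1,HD(\Lambda_t)\}$.

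For the lower bound, take the subhorseshoe $\tilde\Lambda\subset\Lambda_{t-\delta}$ from the continuity step, and choose a point of $\tilde\Lambda$ where $f|_{\tilde\Lambda}$ is maximal; genericity makes this point unique and interior. Near that point, in local coordinates, $f$ is approximately a nondegenerate linear function of the two projections. Hence the values $\ell_{\varphi,f}(x)$, for $x$ whose orbits accumulate on this maximum in prescribed ways, contain a $C^1$ image of $K^s(\tilde\Lambda)+\lambda K^u(\tilde\Lambda)$. By Moreira's theorem this set has dimension $\min\{1,2HD(K^u(\tilde\Lambda))\}$. Letting $\eta,\delta\to0$ and using continuity gives $L(t)\ge\min\{1,HD(\Lambda_t)\}$, and therefore $\min\{1,HD(\Lambda_t)\}=L(t)=M(t)$.
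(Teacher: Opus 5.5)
Your outline has the same architecture as the argument the paper attributes to Lima--Moreira--Villamil. Conservativity gives $d_u=d_s$. The inclusion $M\cap(-\infty,t)\subset f(\Lambda_t)$ gives the upper bound. A Moreira--Romana construction around a maximum point of $f$, on a subhorseshoe of almost full dimension inside $\Lambda_{t-\delta}$, gives the lower bound. However, the step on which both the left-continuity of $d_u$ and the lower bound $L(t)\ge\min\{1,HD(\Lambda_t)\}$ rest is not proved, and the mechanism you propose for it does not work.

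That step is exactly Proposition~1 of Lima--Moreira--Villamil, quoted in Section~2 of the paper. For conservative $\varphi$ and $\nabla f\neq 0$ on $\Lambda$, given $t$ and $\epsilon>0$, there are $\delta>0$ and a subhorseshoe $\Lambda'\subset\Lambda_{t-\delta}$ with $HD(\Lambda')>(1-\epsilon)HD(\Lambda_t)$.

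Your heuristic that the long words coming within $\delta$ of level $t$ ``carry a small proportion of the entropy'' is either false or circular.
\begin{itemize}
\item Read as a counting statement, it fails for fixed $\delta$. Take e.g.\ $t=\max_\Lambda f$, so that $\Lambda_t=\Lambda$ and $\{f>t-\delta\}$ contains a cylinder of the mixing horseshoe. Then the proportion of long admissible words avoiding $\{f>t-\delta\}$ tends to zero.
\item Read as saying that the words avoiding $\{f>t-\delta\}$ still have almost full exponential growth, it is just a restatement of $HD(\Lambda_{t-\delta})\to HD(\Lambda_t)$, which is what has to be proved.
\end{itemize}

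What the gradient condition actually gives is that few cylinders of a fixed depth meet a strip $\{t-\delta\le f\le t\}$ of width comparable to their size. Turning this into a horseshoe inside $\Lambda_{t-\delta}$ of almost full dimension is the real content, and ``re-gluing with connecting words'' does not achieve it, for three reasons.
\begin{itemize}
\item $\Lambda_t$ is not of finite type and its language is not closed under concatenation, so junctions can create orbit points in $\{f>t\}$.
\item $\Lambda_t$ need not be transitive, so connecting words inside it may not exist.
\item A compact invariant subset of a horseshoe can have positive dimension while containing no subhorseshoe at all, e.g.\ a minimal subshift of positive entropy.
\end{itemize}
So the inner approximation of $\Lambda_t$ by horseshoes below level $t$ must exploit the sublevel-set structure of $\Lambda_t$ together with the hypotheses on $f$ and $\varphi$. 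Your proposal leaves exactly this open.

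There are also two smaller inaccuracies.
\begin{itemize}
\item $\Lambda_t$ is not locally a product of $K^s_t$ and $K^u_t$: the bracket of two points of $\Lambda_t$ can be a point where $f>t$. It is only contained in such a product. This gives $HD(\Lambda_t)\le d_s(t)+d_u(t)$ once the box and Hausdorff dimensions of $K^u_t$ are known to agree. The reverse inequality needs a separate argument, e.g.\ ergodic measures on $\Lambda_t$ of nearly maximal dimension together with Young's formula, or the subhorseshoe approximation above.
\item In the lower bound, the values you produce near the maximum form a set $g(K\times K')$, where $g$ is a $C^1$ map with nonvanishing partial derivatives. They do not form a $C^1$ image of $K^s+\lambda K^u$, and approximate linearity at one point does not yield such a containment. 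What is needed is the version of Moreira's dimension formula for $C^1$ images of products of regular Cantor sets, and this is where the genericity of $\varphi\in\tilde{\mathcal U}$ enters.
\end{itemize}
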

	
	It is not hard to verify that the generic conditions of the theorem hold for the case of horseshoes of the form $C(k)\times C(k)$ for the map $T$ and function $f$. Indeed, $\Lambda$ will check the generic conditions in $\tilde{\mathcal{U}}$ because the map $T$ is the natural extension of the Gauss map and that Gauss-Cantor sets are non-essentially affine \cite[Proposition 1]{M3}, and that condition is a generalization of this property. The function $f$ will satisfy the necessary conditions because its gradient is not perpendicular to the stable or to the unstable direction. These facts will be used later in this paper to verify the validity of the generic hypotheses for the second Lagrange spectra (see \Cref{cor:continuity}).
	
	\subsection{Second Lagrange spectra}
	
	Moshchevitin considered in \cite{Mos} the problem of approximating an irrational number $\alpha$ by rational numbers that are not convergents of the continued fraction of $\alpha$. More precisely, if $(\frac{p_n}{q_n})_{n\geq 0}$ are the convergents of the continued fraction of $\alpha$, he defined the second constant of approximation 
	$$k_2(\alpha):=\underset{(p,q)\neq(p_{n},q_{n}),\forall n\in \mathbb{N}}{\underset{|p|,|q|\rightarrow \infty}{\limsup}}|q(q\alpha-p)|^{-1}$$
	and the second Lagrange spectrum $$L_2:=\{k_2(\alpha):\alpha\in \mathbb{R}\setminus \mathbb{Q},k_2(\alpha)<\infty\}.$$ He proved that the smallest element of $L_2$ is $\lambda_1=\frac{\sqrt{5}}{4}=k_2(\frac{1+\sqrt{5}}{2})$ and the second smallest element is $\lambda_2=\frac{\sqrt{17}}{4}=k_2(\frac{1+\sqrt{17}}{2})$. Also, if $k_2(\alpha)=\lambda_1$ or $k_2(\alpha)=\lambda_2$, then $\alpha$ is equivalent to $\frac{1+\sqrt{5}}{2}=[1,\overline{1}]$ or $\frac{1+\sqrt{17}}{4}=[2;\overline{1,1,3}]$, respectively.  To prove this, Moshchevitin proved that 
	
	\begin{lemma}[Moshchevitin]
		For $\alpha\in\R\setminus\Q$, we have $k_2(\alpha)=\limsup_{k\to \infty}\delta(k)$, where 
		
		\begin{equation*}
			\delta(k):=
			\begin{cases}
				\max\left\{\frac{\alpha_{k+1}+\beta_{k+1}}{4},\frac{\alpha_{k+1}+\beta_{k+1}}{(\alpha_{k+1}-1)(1+\beta_{k+1})},\frac{\alpha_{k+1}+\beta_{k+1}}{(\alpha_{k+1}-a_{k+1}+1)(a_{k+1}-1+\beta_{k+1})}\right\}, & \text{if } a_{k+1}\geq2\\
				\frac{\alpha_{k+1}+\beta_{k+1}}{4}, &\text{if } a_{k+1}=1
			\end{cases}
		\end{equation*}
	\end{lemma}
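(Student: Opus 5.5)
We would prove Moshchevitin's lemma by reducing to a classification of the lattice points that give good non-convergent approximations. The tool is Legendre-type information on which fractions $p/q$ can satisfy $|q(q\alpha-p)|<c$.

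For $\alpha$ fixed, any rational $p/q$ with $q\ge1$ lies between consecutive convergents in the Farey/Stern--Brocot sense. Write $(p,q)=u(p_k,q_k)+v(p_{k+1},q_{k+1})$ with $u,v\in\Z$. This is possible because $\{(p_k,q_k),(p_{k+1},q_{k+1})\}$ is a basis of $\Z^2$ with determinant $\pm1$. Using the standard identities
\[
q_k\alpha-p_k=\frac{(-1)^k}{q_k\alpha_{k+1}+q_{k-1}}\cdot(\text{sign adjustments}),\qquad \frac{q_{k-1}}{q_k}=\beta_k,
\]
we would express $q(q\alpha-p)$ as an explicit binary quadratic form in $(u,v)$. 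After normalizing by $q_{k+1}|q_{k+1}\alpha-p_{k+1}|$, its coefficients depend only on $\alpha_{k+1}$ and $\beta_{k+1}$. Concretely,
\[
|q(q\alpha-p)|^{-1}=\frac{\alpha_{k+1}+\beta_{k+1}}{|(u\beta_{k+1}+v)(u-v\,\alpha_{k+1}^{-1}\cdots)|}
\]
up to the correct normalization. We would check this against the classical case $(u,v)=(0,1)$, which must return $\alpha_{k+1}+\beta_{k+1}$.

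Next we choose the index $k$ for a given good approximation. Every $(p,q)$ with $|q(q\alpha-p)|$ bounded is either a multiple of a convergent or an intermediate fraction $(p_{k-1}+jp_k,\ q_{k-1}+jq_k)$ with $1\le j\le a_{k+1}-1$. Non-intermediate points give products of size $\ge 1$ times something bounded below, so they are irrelevant to the limsup. This follows from the usual geometric argument: the quadratic form has no small values off the two strips.

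The candidates are then:
\begin{itemize}
\item[(i)] $2(p_k,q_k)$, which gives value $\frac{\alpha_{k+1}+\beta_{k+1}}{4}$;
\item[(ii)] the intermediate fractions with $j=1$ and $j=a_{k+1}-1$, the extreme ones, giving the other two terms in $\delta(k)$;
\item[(iii)] $j$ strictly in between, and larger multiples, which are dominated.
\end{itemize}
For the extreme intermediate fractions, $|q(q\alpha-p)|$ evaluates to
\[
\frac{(\alpha_{k+1}-1)(1+\beta_{k+1})}{\alpha_{k+1}+\beta_{k+1}}
\quad\text{and}\quad
\frac{(\alpha_{k+1}-a_{k+1}+1)(a_{k+1}-1+\beta_{k+1})}{\alpha_{k+1}+\beta_{k+1}}.
\]
The general intermediate fraction gives $(\alpha_{k+1}-j)(j+\beta_{k+1})/(\alpha_{k+1}+\beta_{k+1})$. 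This is a concave function of $j$, so on $1\le j\le a_{k+1}-1$ it is minimized at an endpoint. Hence only $j=1$ and $j=a_{k+1}-1$ matter. When $a_{k+1}=1$ there are no intermediate fractions, which leaves only case (i).

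Finally we show that larger multiples $m(p_k,q_k)$ with $m\ge3$, and multiples of intermediate fractions, give strictly smaller values. These are factors $m^2$, so the limsup is taken over $\delta(k)$. We would also verify that as $|p|,|q|\to\infty$ the index $k\to\infty$.

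The main obstacle is the exhaustiveness claim in the second step: that no non-intermediate, non-multiple lattice point contributes to the limsup. We would handle it with the $(u,v)$ form. If $uv>0$ in the appropriate sign convention, both factors are at least of order $1$ relative to the scale, giving $|q(q\alpha-p)|\ge$ an absolute constant. That constant must beat the bound relevant for the limsup, and here care is needed, since $\delta(k)\ge\frac{\sqrt5}{4}$ is small. Points with $|v|\ge2$ and $u$ of the opposite sign would be compared directly to the intermediate fraction lying nearest to them.
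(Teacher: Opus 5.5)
\textbf{There is a genuine gap.} Your framework is essentially the paper's: in your coordinates $|v|=|qp_k-pq_k|$, which is exactly the parameter the paper splits on. The problem is the exhaustiveness claim in your second step. It is false, and the case it omits is the one place where a non-local argument is needed.

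Fix the normalization you leave open. Writing $(p,q)=x(p_k,q_k)+y(p_{k-1},q_{k-1})$, one has exactly
\[
\abs{q(q\alpha-p)}^{-1}=\frac{\alpha_{k+1}+\beta_{k+1}}{\abs{(x+y\beta_{k+1})(x-y\alpha_{k+1})}} .
\]
Take the point $(x,y)=(2,-1)$, i.e.\ $(p,q)=(2p_k-p_{k-1},2q_k-q_{k-1})$; in your coordinates this is $v=-1$, $u=a_{k+1}+2$. It is neither a multiple of a convergent nor an intermediate fraction: $p/q$ is a Farey neighbour of $p_k/q_k$ lying on the opposite side of $p_k/q_k$ from $\alpha$. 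Its value is
\[
\kappa_3(k)=\frac{\alpha_{k+1}+\beta_{k+1}}{(\alpha_{k+1}+2)(2-\beta_{k+1})},
\]
which tends to $1$ as $\beta_{k+1}\to1$ and $\alpha_{k+1}\to\infty$.

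This breaks your plan in three ways.
\begin{enumerate}
\item The best absolute bound available is $\abs{q(q\alpha-p)}>1$. That does not beat $\limsup\delta$, which can be as small as $\sqrt5/4$.
\item Your case split ($uv>0$, or $|v|\ge 2$ with opposite signs) never reaches these points, since they have $|v|=1$.
\item They cannot be compared with intermediate fractions of the same index. When $a_{k+1}=1$ there are none and $\delta(k)=\kappa_4(k)$. Yet $\alpha_{k+1}\approx 1.1$, $\beta_{k+1}\approx 0.9$ gives $\kappa_3(k)\approx 0.587>\max\{\kappa_4(k),\sqrt5/4\}$.
\end{enumerate}

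\textbf{What is missing} is a proof that $\limsup_k\kappa_3(k)\le\limsup_k\delta(k)$. This suffices for the whole family $(bp_k-p_{k-1},bq_k-q_{k-1})$, $b\ge2$, whose values $\frac{\alpha_{k+1}+\beta_{k+1}}{(\alpha_{k+1}+b)(b-\beta_{k+1})}$ are largest at $b=2$.

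When $a_{k+1}\ge2$ the comparison is pointwise: $(\alpha_{k+1}+2)(2-\beta_{k+1})>4$, so $\kappa_3(k)<\kappa_4(k)$.

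The indices with $a_{k+1}=1$ need a global argument, and the paper splits by the size of the partial quotients.
\begin{enumerate}
\item If $a_{k+1}\ge4$ infinitely often, then $\limsup\kappa_4\ge1>\sup\kappa_3$.
\item If $a_{k+1}\le3$ eventually (and $a_{k+1}\ge 2$ infinitely often), then $\beta_{k+1}<4/5$ and $\alpha_{k+1}<4$ give $\kappa_3(k)\le 2/3$ eventually. Meanwhile $\kappa_1(k)>\frac13+\frac12$ at the infinitely many $k$ with $a_{k+1}\ge2$.
\item If $a_k=1$ eventually, then $\kappa_3\to\sqrt5/5<\sqrt5/4$.
\end{enumerate}

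The remaining non-intermediate points are harmless.
\begin{enumerate}
\item The points $(bp_k+2p_{k-1},bq_k+2q_{k-1})$ with $1\le b<a_{k+1}$ are beaten by $\kappa_1(k)$, since $(2\alpha_{k+1}-b)(b+2\beta_{k+1})>\alpha_{k+1}(1+2\beta_{k+1})$. Your ``compare with the nearest intermediate fraction'' can be made to do this.
\item Every other point has $\abs{q(q\alpha-p)}>2$. For these your absolute-constant idea does work, because $\limsup\kappa_4\ge\sqrt5/4>\frac12$ by Hurwitz.
\end{enumerate}
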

	
	We will give a different proof of Moshchevitin formula in \Cref{chap:second_spectra_formulas}. 
	
	Later, P. Semenyuk \cite{Sem} proved that the third smallest element is $\lambda_3=\frac{13\sqrt{173}}{164}$ and that if $k_2(\alpha)=\lambda_3$, then $\alpha$ is equivalent to $\frac{39+13\sqrt{173}}{82}=[2;\overline{1,1,1,1,3,1,1,3}]$.
	
	Gayfulin \cite{Gay} proved that, similarly to the classical Lagrange spectrum, the beginning of $L_2$ is also discrete. He gave the following description of the discrete part of this spectrum:
	
	\begin{theorem}[Gayfulin]
		
		For $n\geq 3$, define $\xi_n=[0;\overline{1,1,1,1,3,(1,1,3)_{2n-5}}]$ and \\ $$\lambda_n=\frac{[3;\overline{(1,1,3)_{2n-5},1,1,1,1,3}]+[0;\overline{1,1,1,1,(3,1,1)_{2n-5}}]}{4}.$$ Let $$\lambda_{\infty}=\frac{3\sqrt{17}+21}{32}=\lim_{n\to \infty}\lambda_n=\frac{3\sqrt{17}+21}{32}.$$ 
		
		Then, $\lambda_{\infty}$ is the first accumulation point of $L_2$ and $L_2\cap (-\infty,\lambda_{\infty})=\{\lambda_1<\lambda_2<\lambda_3<\cdots \}$. Also, if $k_2(\alpha)=\lambda_n$, then $\alpha$ is equivalent to $\xi_n$.
	\end{theorem}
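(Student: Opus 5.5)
The plan is to follow the strategy of Markov's classical theorem, working throughout with Moshchevitin's formula $k_2(\alpha)=\limsup_k \delta(k)$ from the preceding lemma. I would work with bi-infinite sequences $\underline{\theta}=(a_n)_{n\in\mathbb Z}$ and the three local quantities
$$f_1=\tfrac{\alpha_0+\beta_0}{4},\qquad f_2=\tfrac{\alpha_0+\beta_0}{(\alpha_0-1)(1+\beta_0)},\qquad f_3=\tfrac{\alpha_0+\beta_0}{(\alpha_0-a_0+1)(a_0-1+\beta_0)}.$$
Here $f_2,f_3$ are used only when $a_0\ge 2$, and $\alpha_0,\beta_0$ are read at position $0$. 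A standard compactness argument shows that $k_2(\alpha)<\lambda_\infty$ forces every limit sequence of the shifts to satisfy $\sup_n \delta(\sigma^n\underline{\theta})\le k_2(\alpha)$. So it suffices to classify all bi-infinite sequences whose supremum is $<\lambda_\infty$, or $\le\lambda_\infty$ for the accumulation statement.

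\textbf{Step 1: restrict the alphabet.} First I show that only the digits $1$ and $3$ can occur, and that the $3$'s are isolated.
\begin{itemize}
\item If some $a_0\ge 4$, then $f_1\ge 4/4$ already exceeds $\lambda_\infty\approx 1.04$. This forces all digits to be $\le 3$, after checking the borderline cases via $f_2,f_3$.
\item If $a_0=2$, the term $f_3$ becomes $(\alpha_0+\beta_0)/((\alpha_0-1)(1+\beta_0))$, which is large because $\alpha_0-1<1$. Comparing it with $\lambda_\infty$ excludes the digit $2$ entirely.
\item For $a_0=3$, both $f_2$ and $f_3$ are controlled, since $\alpha_0-1\ge 2$ and $\alpha_0-2\ge 1$.
\item A block $3,3$ or $3,1,3$ makes $f_1$ at a $3$ exceed $\lambda_\infty$.
\end{itemize}
After this step the sequence is a word in the letters $3$ and $1^{m}$, with $m\ge 2$.

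\textbf{Step 2: local forbidden patterns.} Using the monotonicity of continued fractions (comparing tails lexicographically with alternating sign), I derive the remaining restrictions.
\begin{itemize}
\item Runs of $1$'s between consecutive $3$'s must have even length, equal to $2$ or $4$.
\item A run of length $\ge 6$ lowers $\alpha_0+\beta_0$ at one neighbouring $3$ but raises $f_2$ or $f_3$ at the other.
\end{itemize}
So the sequence is a concatenation of the blocks $A=(1,1,3)$ and $B=(1,1,1,1,3)$. The pure periodic word $\overline{A}$ gives the value $\lambda_2$, and $\overline{B}$-type words give the value $\lambda_1$ once the $3$'s disappear. The real combinatorics is in which mixtures of $A$ and $B$ are allowed.

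\textbf{Step 3: structure of the allowed mixtures, the main obstacle.} I expect this step to be the hardest. I would prove two claims by a sequence of lexicographic comparison lemmas.
\begin{itemize}
\item Two $B$'s cannot be adjacent unless the word is $\overline{B}$-like, and the blocks of $A$'s between consecutive $B$'s must all have the same odd length $2n-5$. The odd parity comes from a symmetry requirement: the maximum of $f_1$ at the central $3$ of an $A$-run is compared against its mirror image, and an asymmetric configuration loses to the reflected one.
\item Unequal lengths $\ell<\ell'$ of $A$-runs push $\sup\delta$ above the value of the periodic word built from the shorter length, and in fact to at least $\lambda_\infty$ in the limit.
\end{itemize}
The delicate part is the quantitative inequalities between continued fractions agreeing on long prefixes. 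I would first try to handle these by explicit estimates of $|[0;w,x]-[0;w,y]|\asymp q(w)^{-2}$ with the Fibonacci-like growth of $q$ for the blocks $(1,1,3)$.

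\textbf{Step 4: compute and conclude.} Steps 1–3 leave only the periodic words $\overline{B\,A^{2n-5}}$. For these I compute the supremum of $\delta$, which is attained by $f_1$ at the middle $3$ of the $A$-run; this gives exactly the stated $\lambda_n$. Then:
\begin{itemize}
\item The $\lambda_n$ increase in $n$, and $\lambda_n\to\lambda_\infty$ by letting the $A$-run grow, which gives the formula $(3\sqrt{17}+21)/32$.
\item Equivalence to $\xi_n$ follows since $\alpha$ has such a tail.
\item Every value above all $\lambda_n$ is $\ge\lambda_\infty$, so $L_2\cap(-\infty,\lambda_\infty)=\{\lambda_1<\lambda_2<\lambda_3<\cdots\}$.
\end{itemize}
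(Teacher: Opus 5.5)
The paper only quotes this theorem from Gayfulin and gives no proof, so your outline has to stand on its own. The structure you aim for is correct: digits in $\{1,3\}$, blocks $A=(1,1,3)$ and $B=(1,1,1,1,3)$, and runs of $A$'s of one common odd length $2n-5$. But Steps 3--4, which carry the whole theorem, rest on a wrong picture of where $\delta$ is large.

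At a $3$ with $1,1,3$ on both sides, $\alpha_0+\beta_0\approx 3+0.56+0.56\approx 4.12$. The large values sit at the two $3$'s flanking each block $1,1,1,1$, where $\alpha_0+\beta_0=3+[0;1,1,1,1,\dots]+[0;1,1,3,\dots]\approx 4.17$. The stated $\lambda_n$ is exactly $f_1$ at the $3$ right after $1,1,1,1$. (Its second period should read $1,1,1,1,3,(1,1,3)_{2n-5}$; as printed it would contain a run of six $1$'s.) With your choice of the middle $3$ of an $A$-run, letting the run grow gives $\sqrt{17}/4=\lambda_2$, not $\lambda_\infty$. In fact $\lambda_\infty$ is $f_1$ at a flanking $3$ of $\overline{A}\,B\,\overline{A}$, namely $\frac14\bigl(\frac{3+\sqrt{17}}{2}+\frac{9-\sqrt{17}}{8}\bigr)$.

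Likewise, the odd parity is not a mirror-symmetry effect at a central $3$. Take the $3$ right after a $B$ that is preceded by $m'$ and followed by $m$ copies of $A$. There you must compare $[0;(1,1,3)_m,1,1,1,1,3,\dots]$ with $[0;\overline{1,1,3}]$; the first difference is at index $3m+3$, so it is larger iff $m$ is even. You must also compare $[0;1,1,1,1,(3,1,1)_{m'},3,1,1,1,1,\dots]$ with $[0;1,1,1,1,\overline{3,1,1}]$; the first difference is at index $3m'+8$, so it is larger iff $m'$ is odd. The same comparison applies symmetrically at the $3$ just before $B$. An even run, or two adjacent odd runs of different lengths, pushes $\alpha_0+\beta_0$ at some flanking $3$ above $4\lambda_\infty$. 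Equal odd runs leave a net deficit, because the discrepancy at the smaller index dominates. That comparison is the real content of Step 3, and your plan does not contain it.

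Several supporting claims are also false, though each can be repaired:
\begin{enumerate}
\item $\overline{B}$ has $f_1=\sqrt{445}/20\approx 1.055>\lambda_\infty$. So adjacent $B$'s are always forbidden, and $\lambda_1$ comes from the all-ones tail, not from $\overline{B}$.
\item In Step 1, $f_1>1$ does not beat $\lambda_\infty\approx 1.043$. Exclude $a_0=4$ via $4+[0;a_1,\dots]+[0;a_{-1},\dots]>4.4$ unless a neighbour is $\ge 5$.
\item For $a_0=2$ one has $\alpha_0-1\in(1,2)$, not $\alpha_0-1<1$. The digit is excluded because $f_2=[0;1,a_1,\dots]+[0;1,a_{-1},\dots]>\frac{10}{9}$.
\item The pair $3,3$ is excluded by $f_3=\frac{1}{\alpha_0-2}+\frac{1}{2+\beta_0}>\frac34+\frac13$, not by $f_1$; for instance $\overline{3}$ has $f_1=\sqrt{13}/4$.
\item Runs of $1$'s of length $3$, $5$ or $\ge 6$ are excluded because they raise $\alpha_0+\beta_0$ at the adjacent $3$; they lower $f_2,f_3$ there, the opposite of what you describe.
\item ``Controlled'' for $f_2,f_3$ needs an actual bound, such as $f_2,f_3<\frac{9}{23}+\frac{9}{14}<\lambda_3$, valid once every $3$ has at least two $1$'s on each side.
\end{enumerate}

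Finally, classifying limit sequences with $\sup\delta\le k_2(\alpha)$ does not by itself give $\alpha\sim\xi_n$. For that you must apply the local inequalities to the tail of $\alpha$ with a uniform margin, which forces the run lengths to be bounded and then eventually constant.
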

	This spectrum can also be represented dynamically. Consider again the shift $\sigma:\Sigma\to \Sigma$ defined over the space $\Sigma=(\mathbb{N}_{>0})^{\mathbb{Z}}$ and the function $f:\Sigma\to \mathbb{R}$ defined by 
	$$f((a_n)_{n\in \mathbb{Z}}):=
	\begin{cases}
		\max\left\{\frac{\alpha_{0}+\beta_{0}^*}{4},\frac{\alpha_{0}+\beta_{0}^*}{(\alpha_{0}-1)(1+\beta_{0}^*)},\frac{\alpha_{0}+\beta_{0}^*}{(\alpha_{0}-a_{0}+1)(a_{0}-1+\beta_{0}^*)}\right\}, &\text{if }a_{0}\geq2\\
		\frac{\alpha_{0}+\beta_{0}^*}{4}, &\text{if } a_{0}=1,
	\end{cases}
	$$
	
	where $\alpha_0=[a_0,a_1,a_2,\cdots]$ and $\beta_0^*=[0,a_{-1},a_{-2},\cdots]$.

	In analogy to the Lagrange spectrum, the nontrivial parts of this spectrum will also be reduced to dynamical Lagrange spectra associated to horseshoes of the map $T$ defined earlier. In this paper we will prove that the dimension function $d_2(t)=HD((L_2\cap (-\infty,t))$ is continuous.
	
	The main difference between this second Lagrange spectrum and the classical Lagrange spectrum is that in this case the height function is no longer a smooth function, but rather the maximum of a finite set of smooth functions.

	Therefore, in order to prove that $d_2$ is continuous, we will prove a that the theorem of continuity of dimension for dynamical Markov and Lagrange spectra associated to conservative horseshoes also holds when the function $f$ may no longer be a smooth function but rather the maximum of a finite number of smooth functions. This version can be stated as follows:

	\begin{theorem}\label{thm:main}
		
		Let $\varphi_0\in \Diff^{2}_{\omega}(S)$ with a horseshoe $\Lambda_0$ and $\mathcal{U}$ a $C^{2}$-sufficiently small neighbourhood of $\varphi_0$ in $\Diff^{2}_{\omega}(S)$ such that $\Lambda_0$ admits a continuation $\Lambda (= \Lambda(\varphi))$ for every $\varphi \in \mathcal{U}$. There exists a residual set $\tilde{\mathcal{U}}\subset \mathcal{U}$ such that for every $\varphi\in \tilde{\mathcal{U}}$ and $r\ge 2$ there exists a $C^r$-residual set $\tilde{\mathcal{R}}_{\varphi,\Lambda}\subset C^r(S,\mathbb{R})$ such that for $f=\max_{i\leq N}f_i, f_i\in \tilde{\mathcal{R}}_{\varphi,\Lambda}$  the functions:
		$$t\mapsto d_u(t):= HD(K^u_t) \ \mbox{and} \ t\mapsto d_s(t):=HD(K^s_t)$$
		are continuous and in fact, they are equal with
		$$HD(\Lambda_t)=d_u(t)+d_s(t)=2d_u(t)$$ and 
		$$\min\{1,HD(\Lambda_t)\}=L(t)=M(t).$$
	\end{theorem}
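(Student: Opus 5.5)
The plan is to re-run the proof of Theorem~\ref{main} (Lima--Moreira--Villamil, building on Cerqueira--Matheus--Moreira) and check each step where smoothness of $f$ enters. For $f=\max_{i\le N} f_i$ that step must be replaced by a statement that uses only the $f_i$. Take $\tilde{\mathcal U}$ exactly as in Theorem~\ref{main}. Take $\tilde{\mathcal R}_{\varphi,\Lambda}$ to be the same residual set of functions; if needed, intersect it with the (still residual) set of $f_i$ whose gradients are nowhere perpendicular to the stable and unstable directions on $\Lambda$. The conclusion must then hold for every finite family $f_1,\dots,f_N$ chosen from this set. The essential observation is that $\Lambda_t=\bigcap_{i}\Lambda_t^{(i)}$, where $\Lambda_t^{(i)}$ is the set defined with $f_i$. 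So all level sets are still maximal invariant sets of a compact region $\{f\le t\}$, which is a finite intersection of regions with smooth boundary.

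The steps, in order, are as follows.

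\textbf{Step 1: upper bounds.} The inequalities $L(t)\le M(t)\le \min\{1,HD(\Lambda_t)\}$ and $HD(\Lambda_t)=d_u(t)+d_s(t)$ do not use smoothness of $f$. The first follows because $M_{\varphi,f,\Lambda}\cap(-\infty,t)$ is a Lipschitz image of a subset of $\Lambda_t$ (via $f$, which is Lipschitz as a max of $C^1$ functions), composed with a covering argument. The second is the standard dimension formula for locally maximal invariant hyperbolic sets of conservative surface diffeomorphisms. Conservativity also gives $d_u(t)=d_s(t)$.

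\textbf{Step 2: continuity of $d_u$.} I follow Cerqueira--Matheus--Moreira. For $\varepsilon>0$ and $t$ with $d_u(t)>0$, I find a subhorseshoe $\tilde\Lambda\subset\Lambda_{t-\delta}$ with $HD(\tilde\Lambda)\ge HD(\Lambda_t)-\varepsilon$. This is a combinatorial argument on the symbolic coding: cut $\Lambda_t$ by cylinders of large length and discard the few whose $f$-values are close to $t$. It uses only that $f$ is uniformly continuous and that $\Lambda_t$ is defined by $\sup f\le t$, so it transfers verbatim. Upper semicontinuity comes from $\Lambda_t=\bigcap_{s>t}\Lambda_s$ and the usual Markov partition approximations.

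\textbf{Step 3: lower bound $L(t)\ge\min\{1,HD(\Lambda_t)\}$.} This is the main obstacle. Given the subhorseshoe $\tilde\Lambda$, one builds orbits whose $\limsup$ of $f$ realizes a set of values. The set $f(\tilde\Lambda')$ contains, up to a diffeomorphic change of coordinates, the arithmetic-sum-type set $g(K^s\times K^u)$ of a local piece of $\tilde\Lambda$. The generic condition on $f$ guarantees $\nabla f$ is transverse to both stable and unstable directions, and Moreira's theorem (via its dynamical version for generic $\varphi$) then gives $HD = \min\{1,HD(\tilde\Lambda)\}$.

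For $f=\max f_i$, the plan is as follows. Choose a point $x\in\tilde\Lambda$ where the maximum of $f$ over $\tilde\Lambda$ is attained, and shrink the subhorseshoe so that near the relevant orbit piece a single index $i$ realizes the maximum. To do this, pick a Markov rectangle $R$ near $x$ of small size. On $R\cap\tilde\Lambda$ either some $f_i$ strictly dominates the others, or the set where two of them coincide is a $C^1$ curve. In the latter case I restrict to a subrectangle on one side; genericity makes this curve transverse, so it costs arbitrarily small dimension. On that subrectangle $f=f_i$ is smooth with good gradient, and the smooth argument of Lima--Moreira--Villamil applies to $f_i$ restricted there. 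The resulting Lagrange values then lie in $L_{\varphi,f,\Lambda}\cap(-\infty,t)$, since along the constructed orbits $f\le t$ everywhere and $f=f_i$ at the times where the $\limsup$ is realized. The delicate point, where I would spend the effort, is ensuring that the localization to a one-index region is compatible with the ``$\limsup$ attained in a prescribed rectangle'' construction without losing more than $\varepsilon$ of dimension. If this fails near points of coincidence, I would instead handle the stable/unstable projections of each region $\{f_i\ge f_j\ \forall j\}$ separately and take the maximum over $i$.
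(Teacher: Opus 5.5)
Your proposal has a genuine gap at its central step. In Step~2 you claim that producing a subhorseshoe $\tilde\Lambda\subset\Lambda_{t-\delta}$ with $HD(\tilde\Lambda)\geq HD(\Lambda_t)-\varepsilon$ uses only uniform continuity of $f$, so it ``transfers verbatim''. This is essentially Proposition~1 of Lima--Moreira--Villamil, and the claim is false.

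Take $f\equiv c$ on $\Lambda$ and $t=c$. Then $\Lambda_{c-\delta}=\emptyset$ for every $\delta>0$ while $\Lambda_c=\Lambda$, so $d_u$ jumps at $c$. The ``few'' cylinders with $f$-values close to $t$ are in fact all of them. The proposition therefore needs a nondegeneracy hypothesis: Lima--Moreira--Villamil prove it for $f\in\mathcal R_{\varphi,\Lambda}$, i.e.\ $\nabla f\neq 0$ on $\Lambda$. Since $f=\max_i f_i$ is not differentiable where two of the $f_i$ tie, this is exactly where the max case needs a new argument. You would have to open up that proof and check that it survives the corners of $\max_i f_i$, and you have not done so.

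Step~3 inherits the gap. To make the $\limsup$ of $f$ (not just of $f_i$) be realized near a maximum point $x_M$, the construction needs a subhorseshoe of $\tilde\Lambda$ of almost full dimension on which $f\leq f(x_M)-\delta$. For smooth $f$ this comes either from uniqueness of the maximum (Moreira--Romana) or from the same Proposition~1. For $\max_i f_i$ the maximum on a subhorseshoe need not be unique, even when each $f_i$ has a unique one: two $f_i$ may reach the same maximal value at different points, which is allowed because the generic conditions are invariant under adding constants.

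Separately, ``genericity makes the coincidence curve transverse'' is not available. Only each $f_i$ individually lies in $\tilde{\mathcal R}_{\varphi,\Lambda}$, and nothing controls $f_i-f_j$. This localization can be repaired without transversality. The active set $I(z)=\{i: f_i(z)=f(z)\}$ is upper semicontinuous, so it is constant on some relatively open piece of the Cantor set near $x_M$, and there $f=f_i$ for some $i\in I(x_M)$. Such an $f_i$ attains its maximum on $\tilde\Lambda$ at $x_M$, so $Df_i(e^{s,u})\neq 0$ nearby.

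The paper avoids all of this with a Rokhlin tower reduction to a \emph{smooth} height function. Let $\hat S=S\times\{1,\dots,N\}$ and $\hat\Lambda=\Lambda\times\{1,\dots,N\}$. Set $\hat\varphi(x,j)=(x,j+1)$ for $j<N$, $\hat\varphi(x,N)=(\varphi(x),1)$, and $\hat f(x,j)=f_j(x)$. Then $\hat f$ is smooth and equals one of the $f_j$ on each level, so it inherits the uniqueness-free generic conditions. Grouping times into blocks of length $N$ gives $\limsup_n\hat f(\hat\varphi^n(x,j))=\limsup_n f(\varphi^n(x))$ and $\sup_n\hat f(\hat\varphi^n(x,j))=\sup_n f(\varphi^n(x))$. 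Hence $L_{\varphi,f,\Lambda}=L_{\hat\varphi,\hat f,\hat\Lambda}$, $M_{\varphi,f,\Lambda}=M_{\hat\varphi,\hat f,\hat\Lambda}$ and $\hat\Lambda_t=\Lambda_t\times\{1,\dots,N\}$, so the smooth theorem applies directly.

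The only change needed in the smooth theorem is to drop uniqueness of the maximum on subhorseshoes. Concretely, replace $H_{\tilde\Lambda}$ by $G_{\tilde\Lambda}$ and use Proposition~1 of Lima--Moreira--Villamil instead of the Moreira--Romana construction. This is needed precisely because $\hat f$, like $\max_i f_i$, may have several maxima.
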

	
	\begin{corollary}\label{corE}
		Defining $d_2(t):=HD(L_2\cap (-\infty,t))$, then $d_2(t)=\overline{\dim}_{box}(L_2\cap (-\infty,t))$, and this function is continuous on $t\in\mathbb{R}$ and surjective on $[0,1]$.
	\end{corollary}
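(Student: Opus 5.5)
The plan is to deduce the corollary from \Cref{thm:main} applied to the horseshoes $\Lambda(k)=C(k)\times C(k)$ of the Adler--Flatto map $T$. The height function is the piecewise-defined $f$ above, rewritten in the coordinates $(x,y)=([0;a_1,\dots],[0;a_{-1},\dots])$. On each rectangle $\{a_0=j\}$ it is the maximum of at most three real-analytic functions of $(\alpha_0,\beta_0^*)$, and these pieces are smooth and separated from each other.

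\textbf{Step 1: reduce $L_2$ to dynamical spectra.} First, using Moshchevitin's lemma, I write $L_2=\{\limsup_n f(\sigma^n\underline\theta)\}$. Second, I show that for each $t$ there is $k=k(t)$ such that any $\underline\theta$ with $\limsup f(\sigma^n\underline\theta)<t$ has $a_n\le k$ for all large $n$. This holds because $f\ge(\alpha_0+\beta_0^*)/4\ge a_0/4$. Hence $L_2\cap(-\infty,t)=L_{T,f,\Lambda(k)}\cap(-\infty,t)$ for $k\ge 4t+1$, and similarly $d_2(t)=L(t)$ for the dynamical Lagrange spectrum of $(T,f,\Lambda(k))$.

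\textbf{Step 2: verify the genericity hypotheses.} The map $T$ restricted to $\Lambda(k)$ satisfies the conditions defining $\tilde{\mathcal U}$, because Gauss--Cantor sets are non essentially affine \cite[Proposition 1]{M3}; this is as noted after Theorem \ref{main}. For the height function, each piece $f_i$ must satisfy the conditions defining $\tilde{\mathcal R}_{\varphi,\Lambda}$. As in the classical case, this amounts to checking that its gradient is nowhere perpendicular to the stable or unstable directions on $\Lambda(k)$, which are the coordinate directions. For $g=(\alpha+\beta)/4$ this is immediate. For the other two pieces I compute the partial derivatives explicitly. For example, for $h=(\alpha+\beta)/((\alpha-1)(1+\beta))$ we have $\partial_\alpha h=-(1+\beta)^{-1}(1+\beta)/(\alpha-1)^2\ne0$, and by symmetry $\partial_\beta h\neq 0$. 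The third piece, after the substitution $\alpha'=\alpha-a_0+1$, $\beta'=\beta+a_0-1$, has the same form. So the pieces are locally defined smooth functions with non-vanishing partials. One technical point remains: $f$ is only defined piecewise across the rectangles $a_0=j$. I would handle it by extending each piece smoothly to $S$, taking $f=\max f_i$ after multiplying by cutoffs (or by setting a piece to $-\infty$ off its rectangle). The rectangles are disjoint compact pieces of the horseshoe, so this is harmless, and the proof of \Cref{thm:main} only uses local properties.

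\textbf{Step 3: conclude.} By \Cref{thm:main}, $d_2(t)=\min\{1,HD(\Lambda_t)\}=\min\{1,2d_u(t)\}$ is continuous in $t$; continuity at $t$ only requires a fixed $k$ covering a neighborhood. For surjectivity, $d_2(t)=0$ for $t<\lambda_\infty$ since that part of $L_2$ is countable. Also $d_2(t)=1$ for large $t$: if $HD(\Lambda_t)\ge 1$ then $d_2=1$, and $\Lambda_t$ contains $C(m)\times C(m)$ for $t$ large relative to $m$, whose dimension is $2HD(C(m))>1$ once $m\ge2$. The intermediate value theorem then gives every value in $[0,1]$. For the box dimension, the same argument yields $\overline{\dim}_{box}(L_2\cap(-\infty,t))\le\min\{1,2d_u(t)\}$, because $L_2\cap(-\infty,t)$ is contained in the image under the Lipschitz map $f$ of finitely many pieces of $\Lambda_{t}$, and upper box dimension of horseshoes equals Hausdorff dimension. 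Together with $HD\le\overline{\dim}_{box}$ this gives equality.

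\textbf{Main obstacle.} I expect the delicate step to be Step 2: fitting the non-smooth, piecewise-defined $f$ into the exact framework of \Cref{thm:main}, that is, checking that the explicit pieces lie in the residual set $\tilde{\mathcal R}$. Residual sets are not usually verifiable for specific functions, so I would first identify the concrete, checkable conditions actually used in the proof (transversality of $\nabla f_i$ to $E^s,E^u$ and the non-essential-affinity of the Cantor sets) and verify those directly.
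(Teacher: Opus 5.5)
Your strategy matches the paper's. You reduce $L_2$ to the dynamical Lagrange spectrum of the Adler--Flatto map $T$ on a Gauss--Cantor horseshoe. You write the height function as a maximum of three pieces whose partial derivatives in the coordinate (stable/unstable) directions never vanish. You then invoke \Cref{cor:continuity}.

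The one real difference is how you treat large $t$.
\begin{itemize}
\item The paper fixes $C(7)\times C(7)$. It shows that once $k_2(\alpha)\ge 2$ only the term $(\alpha_{k+1}+\beta_{k+1})/4$ matters. From Hall's ray in $L$ it deduces $[\frac{21}{11},\infty)\subset L_2$, which gives both continuity for $t\ge 2$ and the value $1$.
\item You let the horseshoe $C(k)\times C(k)$, $k\ge 4t+1$, grow with $t$ (via $f\ge a_0/4$). You get $d_2(t)=1$ from $\Lambda_t\supset C(2)\times C(2)$, $2HD(C(2))>1$ and $d_2(t)=\min\{1,HD(\Lambda_t)\}$.
\end{itemize}
Your version is valid and avoids the Hall-ray lemma. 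The price is checking the hypotheses for every $k$, though this is the same computation, uniform in $k$. The paper needs only one horseshoe and gets an explicit ray in $L_2$ as a by-product.

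Two steps fail as written, both easily repaired.

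\textbf{The inactive pieces.} Multiplying the second and third pieces by a cutoff makes them identically $0$ on the rectangles $\{a_0=1\}$. Their gradients then vanish at points of $\Lambda$. This violates the hypothesis of \Cref{cor:continuity} that $Df_j(e^{s,u}_z)\neq 0$ for all $z\in\Lambda$, and likewise the condition $\hat f\in\mathcal{R}'_{\hat\varphi,\hat\Lambda}$ used in the tower argument. Setting a piece to $-\infty$ leaves the smooth class altogether. The claim that ``the proof only uses local properties'' is not available without reopening the Lima--Moreira--Villamil argument. The paper's fix is what you need: on rectangles where a piece is inactive, replace it by a smooth function with nonzero partials lying below $(\alpha_0+\beta_0^*)/4$, e.g.\ $-x-y$. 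Relatedly, the coordinates must be $x=[0;a_0,a_1,\dots]$, $y=[0;a_{-1},\dots]$, so that $a_0=\lfloor 1/x\rfloor$ is determined by the point. With your $x=[0;a_1,\dots]$ it is not.

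\textbf{The box-dimension step.} $\Lambda_t$ is only a compact invariant subset of the horseshoe. In general it is not of finite type and has no local product structure. So ``upper box dimension of horseshoes equals Hausdorff dimension'' does not apply to it. One way to close this:
\begin{itemize}
\item In the $(x,y)$ coordinates, $\Lambda_t\subset K^u_t\times K^s_t$.
\item $K^u_t$ and $K^s_t$ are compact and forward-invariant under the Gauss map (by $T^{\pm1}$-invariance of $\Lambda_t$). They inherit bounded distortion and bounded geometry from $C(k)$.
\item Falconer's implicit theorem then gives $\overline{\dim}_{box}K^{u,s}_t=HD(K^{u,s}_t)$.
\item Hence $\overline{\dim}_{box}\Lambda_t\le d_u(t)+d_s(t)=HD(\Lambda_t)$, and your Lipschitz-image argument concludes.
\end{itemize}
Alternatively, approximate $\Lambda_t$ from outside by subhorseshoes of $\Lambda_{t+\eta}$, as in Moreira's work, and use continuity. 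The paper's proof does not address the box-dimension equality at all. You are supplying something it leaves to the cited literature, but your argument needs this extra ingredient.
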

	
	The definition of the approximation constant used to $L_2$ allows the possibility that the second best approximations are given by fractions $\frac{p}{q}$ that are equal to a convergent $\frac{p_n}{q_n}$, with $(p,q)=k(p_n,q_n)$. This lead Moshchevitin to formulate an alternative definition of the second Lagrange spectrum. We define $k_2^*(\alpha):=\limsup_{\frac{p}{q}\neq \frac{p_n}{q_n}}(|q||q\alpha-p|)^{-1}$, $L_2^*:=\{k_2^*(\alpha):\alpha\in \mathbb{R}\setminus \mathbb{Q},k_2^*(\alpha)<+\infty\}$.
	
	He proved that $L_2^*\cap (-\infty,\frac{2}{3})=\{\frac{\sqrt{5}}{5}\}=\{k_2^*(\frac{-1+\sqrt{5}}{2})\}$, and that the first accumulation point is $\frac{2}{3}=k_2^*(e)$ where $e$ is the Euler number. Contrary to the classical Lagrange spectrum $L$ and the second Lagrange spectrum $L_2$, the set $L_2^{*}$ is bounded $L_2^{*}\subset[0,2]$ (because of a classical theorem of Legendre) and instead of containing a Hall's ray, it ends with the interval $[\frac{3}{2},2]\subset L_2^{*}$. The structure of $L_2^{*}\cap(\frac{2}{3},\frac{3}{2})$ is unknown besides from the fact that it is infinite.
	
	Moshchevitin also found a formula for $k_2^*$. He proved that $k_2^*(\alpha)=\limsup_{n\to \infty}\kappa(\alpha)$, where $$\kappa(k):=\begin{cases}
		\frac{\alpha_{k+1}+\beta_{k+1}}{(2\alpha_{k+1}-1)(2\beta_{k+1}+1)}, & \text{if } a_{k+1}=1 \\
		\max\left\{\frac{\alpha_{k+1}+\beta_{k+1}}{(\alpha_{k+1}-1)(1+\beta_{k+1})},\frac{\alpha_{k+1}+\beta_{k+1}}{(\alpha_{k+1}-a_{k+1}+1)(a_{k+1}-1+\beta_{k+1})}\right\}, & \text{if } a_{k+1}\geq 2 
	\end{cases}$$
	
	We have a variation of the formula, again, with a different proof:
	
	\begin{lemma}
		For $\alpha\in \mathbb{R}\setminus \mathbb{Q}$, we have $k_2^*(\alpha)=\limsup_{n\to \infty}\gamma(n)$, where
		$$\gamma(k):=
		\begin{cases}
			\max\left\{\frac{\alpha_{k+1}+\beta_{k+1}}{(2-\beta_{k+1})(\alpha_{k+1}+2)},\frac{\alpha_{k+1}+\beta_{k+1}}{(\alpha_{k+1}-1)(1+\beta_{k+1})},\frac{\alpha_{k+1}+\beta_{k+1}}{(\alpha_{k+1}-a_{k+1}+1)(a_{k+1}-1+\beta_{k+1})}\right\}, &\text{if }a_{k+1}\geq2\\
			\frac{\alpha_{k+1}+\beta_{k+1}}{(2-\beta_{k+1})(\alpha_{k+1}+2)}, & \text{if }a_{k+1}=1.
		\end{cases}
		$$
	\end{lemma}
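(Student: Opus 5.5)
The plan is to reduce $k_2^*(\alpha)$ to a local analysis between consecutive convergents. Set $\|x\|_{p,q}=|q||q\alpha-p|$. First, by Legendre's theorem any reduced fraction $p/q$ with $|q(q\alpha-p)|<1/2$ is a convergent. So only finitely many reduced non-convergent fractions matter, up to large denominators. Second, every reduced fraction $p/q$ with $q$ large and $q|q\alpha-p|<2$ lies in a restricted family. Either it is an intermediate fraction (semiconvergent)
$$\frac{p_{k-1}+jp_k}{q_{k-1}+jq_k},\qquad 1\le j\le a_{k+1}-1,$$
or it is one of the neighbouring combinations $\frac{p_{k+1}\pm p_k}{q_{k+1}\pm q_k}$. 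These are the classical "Farey neighbours" of the convergents. I would prove this by writing $(p,q)=u(p_k,q_k)+v(p_{k+1},q_{k+1})$ with $u,v\in\mathbb{Z}$, for the unique $k$ with $q_k\le q<q_{k+1}$ (or a suitable variant).

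The key identity is
$$q\alpha-p=u(q_k\alpha-p_k)+v(q_{k+1}\alpha-p_{k+1}).$$
Combine it with $q_k\alpha-p_k=(-1)^k/(q_k\alpha_{k+1}+q_{k-1})$ and the ratio $q_{k-1}/q_k=\beta_k$, written with the paper's indexing. This gives
$$q|q\alpha-p|=\frac{|u+v\,\cdot\,(\text{stuff})|\cdot|u\,\cdot\,(\cdot)-v|}{\alpha_{k+1}+\beta_{k+1}}$$
up to normalisation: an indefinite binary quadratic form $Q_k(u,v)$ of discriminant tied to $\alpha_{k+1}+\beta_{k+1}$. Concretely, $q(q\alpha-p)$ equals $\pm\,(u\alpha_{k+1}' + \ldots)(\ldots)/(\alpha_{k+1}+\beta_{k+1})$. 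Hence $|q(q\alpha-p)|^{-1}=(\alpha_{k+1}+\beta_{k+1})/|\text{product of two linear forms}|$.

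Now enumerate the pairs $(u,v)$ that are not convergents and give product less than $2$. Writing $q$ in the basis $(q_{k-1},q_k)$ or $(q_k,q_{k+1})$, these are three candidates:
\begin{enumerate}
\item[(a)] $(p_{k+1}+p_k)/(q_{k+1}+q_k)$, the "mediant after", which gives denominator $(\alpha_{k+1}+2)(2-\beta_{k+1})$ after normalisation. (One has to check that the expression $(p_{k}+p_{k-1})$-type combination with sign produces exactly $2-\beta$ and $\alpha+2$; I would double-check indexing here.)
\item[(b)] The first semiconvergent $j=1$, $(p_{k-1}+p_k)/(q_{k-1}+q_k)$, which gives $(\alpha_{k+1}-1)(1+\beta_{k+1})$.
\item[(c)] The last semiconvergent $j=a_{k+1}-1$, which gives $(\alpha_{k+1}-a_{k+1}+1)(a_{k+1}-1+\beta_{k+1})$.
\end{enumerate}
Cases (b) and (c) are non-convergents only when $a_{k+1}\ge2$; when $a_{k+1}=1$ they coincide with convergents, which explains the case split in the definition of $\gamma$. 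The intermediate $j$ in between give larger products, since the function $j\mapsto(\alpha_{k+1}-j)(j+\beta_{k+1})$ is concave. So they never exceed the max of the endpoints and can be dropped. The other combinations, $|u|,|v|$ larger or the $(p_{k+1}-p_k)$ variant, are checked to give product at least $2$, or to reduce to a case already counted at an adjacent index $k$. Non-reduced multiples are excluded by definition of $k_2^*$, and since reduced fractions correspond to primitive $(u,v)$ nothing else arises.

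Finally, take $\limsup$. Since the set of candidate fractions, indexed by $k$, exhausts all reduced non-convergents with $q|q\alpha-p|<2$, and each candidate's reciprocal quantity is one of the three terms of $\gamma(k)$, we get $k_2^*(\alpha)=\limsup_k\gamma(k)$. Because $L_2^*\subset[0,2]$ forces values at least $1/2$, fractions with product at least $2$ do not affect a limsup that is at least $1/2$; the only exception is a limsup below $1/2$, which is handled directly.

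I expect the main obstacle to be the enumeration step: proving cleanly that no primitive $(u,v)$ other than (a)--(c), and their reindexed copies, yields $|Q_k(u,v)|<2$ uniformly. I would attack it by bounding the two linear factors separately using $\alpha_{k+1}>1$ and $0<\beta_{k+1}<1$, treating sign patterns of $u,v$ one at a time.
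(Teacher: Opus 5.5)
Your candidate (a) is misidentified, and no re-indexing fixes it. Since $q_{k+1}\alpha-p_{k+1}=-(q_k\alpha-p_k)/\alpha_{k+2}$, the fraction $\frac{p_{k+1}+p_k}{q_{k+1}+q_k}$ gives $q|q\alpha-p|=\frac{(\alpha_{k+2}-1)(1+\beta_{k+2})}{\alpha_{k+2}+\beta_{k+2}}$. That is $\kappa_1(k+1)$: the fraction is just the first intermediate fraction of the next block, or the convergent $p_{k+2}/q_{k+2}$ when $a_{k+2}=1$. Likewise $\frac{p_{k+1}-p_k}{q_{k+1}-q_k}$ is your (c), or $p_{k-1}/q_{k-1}$ when $a_{k+1}=1$. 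So your ``restricted family'' is only convergents plus intermediate fractions. That is the classical (Fatou--Grace) list for $q|q\alpha-p|<1$, not for $q|q\alpha-p|<2$.

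The fraction that actually produces $\kappa_3(k)$ is $\frac{2p_k-p_{k-1}}{2q_k-q_{k-1}}$. It is a Farey neighbour of $p_k/q_k$ lying on the side of $p_k/q_k$ \emph{away} from $\alpha$, so it is not an intermediate fraction. Here $q\alpha-p=(\alpha_{k+1}+2)(q_k\alpha-p_k)$ and $q=q_k(2-\beta_{k+1})$, which gives exactly $\frac{(\alpha_{k+1}+2)(2-\beta_{k+1})}{\alpha_{k+1}+\beta_{k+1}}$. In your coordinates this is $(u,v)=(a_{k+1}+2,-1)$, and its product is often well below $2$. So your claim that all remaining primitive $(u,v)$ give product at least $2$ fails precisely here.

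Carried out as written, your argument would output $\limsup_{a_{k+1}\ge 2}\max\{\kappa_1(k),\kappa_2(k)\}$, which is false. For $\alpha=[0;\overline{1,1,1,1,30}]$, at the positions with $a_{k+1}=30$ one has $\kappa_1(k)=\kappa_2(k)\approx 0.658$, while $\kappa_3(k)\approx 0.684$ is realized by a genuine reduced non-convergent in $[q_k,q_{k+1})$. This far-side family also exists when $a_{k+1}=1$. That is the real reason the second case of $\gamma$ is $\kappa_3$ rather than empty, and your explanation of the case split does not capture it. In the eventually-all-ones case, the value $\sqrt{5}/5$ comes entirely from this candidate, so your ``handled directly'' exception needs it too.

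The paper does not enumerate globally. It fixes the window $q_k\le q<q_{k+1}$ and splits according to which side of $p_k/q_k$ the fraction lies on and according to $|qp_k-pq_k|$:
\begin{enumerate}
\item Far side with $|qp_k-pq_k|=1$: one gets $\frac{bp_k-p_{k-1}}{bq_k-q_{k-1}}$, $b\ge 2$, with reciprocal $\frac{\alpha_{k+1}+\beta_{k+1}}{(\alpha_{k+1}+b)(b-\beta_{k+1})}$, maximal at $b=2$.
\item Near side with $|qp_k-pq_k|=1$: these are the intermediate fractions, and your concavity argument is fine there.
\item Near side with $|qp_k-pq_k|=2$: one gets $\frac{bp_k+2p_{k-1}}{bq_k+2q_{k-1}}$. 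These can also have product below $2$, but they are dominated by $\kappa_1(k)$ in the same window.
\item All larger values of $|qp_k-pq_k|$ give product above $2$.
\end{enumerate}
Finally, the lower bound $1/2$ you invoke does not come from $L_2^*\subset[0,2]$, which is an upper bound. It comes from $\kappa_1(k)>\frac{1}{1+\beta_{k+1}}>\frac12$ whenever $a_{k+1}\ge 2$.
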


	We will use both formulas, according to convenience.
	With this formula, the spectrum $L_2^*$ also admits a dynamical characterization as the dynamical Lagrange spectrum for the function $f^*:\Sigma\to \mathbb{R}$, $$f^*((a_n)_{n\in \mathbb{Z}})=\begin{cases}
		\frac{\alpha_{0}+\beta_{0}^*}{(2\alpha_{0}-1)(2\beta_{0}^*+1)}, & \text{if } a_{0}=1 \\
		\max\left\{\frac{\alpha_{0}+\beta_{0}^*}{(\alpha_{0}-1)(1+\beta_{{0}}^*)},\frac{\alpha_{0}+\beta_{0}^*}{(\alpha_{0}-a_{0}+1)(a_{0}-1+\beta_{0}^*)}\right\}, & \text{if } a_{0}\geq 2 
	\end{cases}$$
	for the shift $\sigma:\Sigma\to \Sigma$. 
	
	However, contrary to $L_2$, the dimension function $d_2^*(t)=HD(L_2^*(-\infty,t))$ is not continuous. In fact, we have the following result:
	
	\begin{theorem}\label{thm:discontinuous}
		$d_2^*(\frac{2}{3})=0$, but for every $\epsilon>0$, $d_2^*(\frac{2}{3}+\varepsilon)=1$.
	\end{theorem}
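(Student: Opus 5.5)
The first claim follows from Moshchevitin's result recalled above: $L_2^*\cap(-\infty,\frac23)=\{\frac{\sqrt5}{5}\}$. A single point has Hausdorff dimension $0$, so $d_2^*(\frac23)=0$. All the work is in the second claim. Since $d_2^*\le 1$ trivially, I will show that $d_2^*(\frac23+\varepsilon)\ge 1-\delta$ for every $\delta>0$. The guiding example is $e=[2;1,2,1,1,4,1,1,6,\dots]$, for which $k_2^*(e)=\frac23$: it is made of blocks $(1,1,a)$ with $a$ large.

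First I compute with Moshchevitin's formula for $\kappa$ on sequences made of blocks $(1,1,a_j)$ with all $a_j\ge N$.
\begin{itemize}
\item At a position with partial quotient $a\ge N$, we have $\alpha\approx a+[0;1,1,\dots]$ and $\beta\approx[0;1,1,\dots]\approx\frac12$. Both expressions $\frac{\alpha+\beta}{(\alpha-1)(1+\beta)}$ and $\frac{\alpha+\beta}{(\alpha-a+1)(a-1+\beta)}$ then equal $\frac23+O(1/N)$.
\item At the two positions carrying $1$, we have $(\alpha,\beta)\approx(2,0)$ and $(1,1)$ respectively, up to $O(1/N)$. In both cases $\frac{\alpha+\beta}{(2\alpha-1)(2\beta+1)}=\frac23+O(1/N)$.
\end{itemize}
So for $N=N(\varepsilon)$ large, every $\alpha$ whose tail consists of such blocks has $k_2^*(\alpha)<\frac23+\varepsilon$. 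Equivalently, for the shift with height function $f^*$, all these sequences lie in $\Lambda_t$ with $t=\frac23+\varepsilon$.

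Next, for $M\gg N$ (say $M=N^2$), let $K_{N,M}=\{[0;1,1,a_1,1,1,a_2,\dots]:N\le a_j\le M\}$. This is a Gauss-type regular Cantor set, hence non essentially affine by \cite[Proposition 1]{M3}. By the classical estimates for continued fractions with digits in $[N,N^2]$ (Good's theorem), its dimension tends to $\frac12$ as $N\to\infty$. I then follow Moreira's argument for $d(t)=\min\{1,2D(t)\}$ in \cite{M3}, i.e.\ the lower bound in Theorem \ref{main}. Two-sided sequences whose future and past lie in the subshift defining $K_{N,M}$ form a horseshoe $\Lambda'\subset\Lambda_t$ with $HD(\Lambda')=2\,HD(K_{N,M})\ge 1-\delta$.

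To realise values of $k_2^*$, I fix a marker block at a central position $0$ that strictly dominates all other values of $f^*$ along the sequence. Because the correction term is $c/a+O(1/a^2)$, I will use the smallest or the largest admissible digit as the marker, according to the sign of $c$, and then move the other digits away from it by a fixed margin. Then $\ell(\underline\theta)$ equals $f^*$ evaluated at the marker, and as the future and past vary in $K_{N,M}$ this is $g(x,y)$ for a smooth function $g$ of $(\alpha_0,\beta_0^*)$. Here $f^*$ is a single smooth branch, namely one of the two rational expressions, with nonvanishing partial derivatives in both variables. Moreira's theorem (in the form for images $g(K\times K')$ of products of regular Cantor sets under maps transverse to both factors) then gives
$$HD\bigl(L_2^*\cap(-\infty,\tfrac23+\varepsilon)\bigr)\ge\min\{1,2HD(K_{N,M})\}-\delta' \ge 1-\delta.$$

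The main obstacle is the dominance step. I must check that the marker position really realises the limsup and that no other position reaches it, which needs explicit expansions of the two branches of $f^*$ to second order in $1/a$. The marker must also keep the two sides free enough that the dimension drops by at most $\delta$; if needed, I will restrict the alphabet to a sub-band such as $[2N,M]$ and verify the branch inequalities by direct estimates. I also need to confirm which of the two rational expressions is the maximum near the marker, so that $g$ is genuinely smooth there and the transversality hypothesis applies. As a fallback, if a uniform sign cannot be obtained, I would use Theorem \ref{thm:main} directly on the horseshoe $\Lambda'$ with $f^*$ written as a maximum of smooth functions, after checking its genericity conditions as in \Cref{cor:continuity}.
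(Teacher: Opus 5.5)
Your proposal is correct and is essentially the paper's proof. Your fallback is the paper's main argument: \Cref{cor:continuity} applied to the horseshoe $G^2(X(k))\times X(k)$ of $T^3$ with digits $x_{3n}\in[k^2,k^3]$, which gives values of dimension $\min\{1,2HD(X(k))\}$. Your marker construction is the paper's explicit second argument: the digit $k_0$ at position $0$ is flanked by $2k_1$ and $k_1$, with $k_1=\lfloor k_0^{3/2}\rfloor$, so that the branch $\frac{1}{\alpha_0-1}+\frac{1}{1+\beta_0^*}$ strictly dominates, and then Moreira's sum formula is applied.

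The only real difference is the dimension input. You let $\delta\to 0$ using $HD(K_{N,N^2})\to\frac12$, whereas the paper shows $HD(X(k))>\frac12$ outright from the Palis--Takens bound and $\sum_{k^2\le j\le k^3}(2j+1)^{-1}\sim\frac12\log k$. That pressure estimate, rather than Good's theorem, is what actually supplies your lower bound, because your set carries the inserted digits $1,1$.

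Two small points. First, a single marker at position $0$ produces a Markov value, not a Lagrange value. To get $\ell$ you must let the marker recur inside growing windows, as in the paper's blocks $\tau_n$. Second, your value $\frac23+O(1/N)$ at the second $1$ of each block is correct. The paper's bound $<\frac{5}{12}$ there is a slip, but it is harmless, since that term is still dominated by the adjacent large-digit terms.
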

	
	The first statement of the theorem follows immediately from the fact that $L_2^*\cap (-\infty,\frac{2}{3})=\{\frac{\sqrt{5}}{5}\}$ is discrete. To prove the second statement, we use the remark by Moshchevitin that the particular expression of $e=[2;1,2,1,1,4,1,1,6,1,1,\cdots]$ is not crucial to obtain $k_2^*(e)=\frac{2}{3}$. Any number $\alpha$ equivalent to a number of the form $[0;1,1,x_3,1,1,x_6,\cdots]$ with $\lim_{n\to \infty}x_{3n}=\infty$ will be such that $k_2^*(\alpha)=\frac{2}{3}$. We obtain a converse to this result, which provides a clearer understanding of $L_2^{*}\cap(\frac{2}{3},\frac{2}{3}+\varepsilon)$. 
	
	\begin{lemma}\label{lem:reciprocal_lemma}
		If $\alpha\in\R\setminus\Q$ is such that $k_2^{*}(\alpha)\in[\frac{2}{3},\frac{\sqrt{493}}{33})$. Then $\alpha$ is equivalent to $[0;1,1,x_{3},1,1,x_6,\cdots]$ where $x_{3n}\geq 145$ for all $n\geq 1$. Moreover, these constants are optimal because 
		\begin{equation*}
			k_2^{*}([0;\overline{1,1,1,1,21}])=\frac{\sqrt{493}}{33}=0.67283646\dots
		\end{equation*}
		\begin{equation*}
			k_2^*([0;\overline{1,1,145}])=\frac{\sqrt{21317}}{217}=0.67282684\dots<\frac{\sqrt{493}}{33}.
		\end{equation*}
	\end{lemma}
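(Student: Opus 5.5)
We work with the dynamical formula $k_2^*(\alpha)=\limsup_{n}\kappa(n)$, using either $\kappa$ or $\gamma$ as convenient. Write $\alpha_{k+1}=[a_{k+1};a_{k+2},\dots]$ and $\beta_{k+1}=[0;a_k,\dots,a_1]$. The statement is a forbidden-pattern argument: each local configuration of partial quotients that does not fit the pattern $1,1,x$ with $x\ge 145$ will be shown to force $\kappa\ge \frac{\sqrt{493}}{33}$ (minus an error tending to $0$) whenever it occurs infinitely often. Since $\frac{\sqrt{493}}{33}$ is excluded from the interval, each such configuration can occur only finitely often, and a tail of the expansion has the required form.

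\textbf{Step 1: local estimates.} First we record how the three terms behave.
\begin{itemize}
\item For $a_{k+1}=1$, $\kappa=\frac{\alpha+\beta}{(2\alpha-1)(2\beta+1)}$. This is large when $\alpha$ is close to $1$, i.e. when $a_{k+2}$ is large, and large when $\beta$ is small, i.e. when $a_k$ is large.
\item For $a_{k+1}\ge2$, the term $\frac{\alpha+\beta}{(\alpha-1)(1+\beta)}$ is largest when $a_{k+1}=2$ and $\alpha_{k+2}$ is small. The third term is symmetric to it.
\end{itemize}
A direct check then gives the following consequences.
\begin{itemize}
\item Any $a_{k+1}=2$ gives $\kappa$ near or above $\frac{2}{3}+c$ for a definite $c$.
\item Two consecutive quotients $\ge2$ push $\kappa$ up.
\item Three consecutive $1$'s push $\kappa$ up. For $[\dots,1,1,1,\dots]$ the middle term has $\alpha$ and $\beta$ both near $\frac{1}{\phi}$-type values, and we compute a value above $\frac{\sqrt{493}}{33}$.
\item A single isolated $1$ between two large quotients pushes $\kappa$ up.
\end{itemize}
The plan is to compute, for each finite pattern, the infimum of $\kappa$ over all continuations, using monotonicity of continued fractions in each digit with alternating direction. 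This reduces each bound to evaluating at extremal periodic tails.

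\textbf{Step 2: elimination of patterns.}
\begin{itemize}
\item Exclude $a_n=2$, and more generally small values $a_n\ge 2$, eventually.
\item Exclude $a_n, a_{n+1}$ both $\ge2$.
\item Exclude runs of $1$'s of length $\ne 2$. Length $1$ and length $\ge3$ are ruled out; length $4$ is exactly the case behind the critical example $[0;\overline{1,1,1,1,21}]$.
\end{itemize}
After this, a tail of the expansion alternates between blocks $1,1$ and single quotients $x\ge 3$.

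\textbf{Step 3: sharpening to $x_{3n}\ge145$.} Take the configuration $1,1,x,1,1$ with $x\le 144$ and evaluate $\kappa$ at the two $1$'s adjacent to $x$. Monotonicity in $x$ and in the neighbouring large quotients shows that the worst case is attained by the periodic sequence $\overline{1,1,144}$, with neighbours tending to infinity or chosen as extremal. We verify this value exceeds $\frac{\sqrt{493}}{33}$, while $\overline{1,1,145}$ gives $\frac{\sqrt{21317}}{217}<\frac{\sqrt{493}}{33}$. The thresholds are tight, so this comparison must be done carefully; the difference is about $10^{-5}$.

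\textbf{Step 4: optimality.} We compute $k_2^*$ for the two periodic points by evaluating $\kappa$ at each position of the period with exact quadratic irrationals, which yields $\frac{\sqrt{493}}{33}$ and $\frac{\sqrt{21317}}{217}$.

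\textbf{Main obstacle.} The hardest step is the uniform handling of non-periodic tails in Step 3, and in excluding the run of four $1$'s. There the margins are tiny, so the crude "extremal neighbour" bounds may not suffice. If they fail, we will track two further digits on each side and use the exact difference formula for continued fractions sharing a prefix.
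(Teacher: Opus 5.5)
Your forbidden-pattern framing matches the paper's, but the mechanisms you name would not work, and the inequality that drives the whole argument is missing.

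First, you look for the obstruction in the wrong places. Inside a run of $1$'s, with $\alpha,\beta$ near golden-ratio values as you describe, the term is close to $\frac{1}{\sqrt5}\approx0.447$, not above $\frac{\sqrt{493}}{33}$. In Step 3, the terms at the two $1$'s next to $x$ are $\frac23-\frac1x+\frac1{9x_\pm}$ to first order, where $x_\pm$ are the neighbouring large quotients. Once all quotients $\ge2$ are $\ge21$, these terms are at most $\frac23+\frac1{192}<\frac{\sqrt{493}}{33}$, whatever $x$ is. The excess over $\frac23$ sits at $x$ itself: $\frac{1}{\alpha_n-1}+\frac{1}{1+\beta_n}\approx\frac23+\frac1x-\frac1{9x_-}$. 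Even this stays below the threshold in configurations like $50,1,1,144,1,1,50$. So you need the paper's device: take $x=\liminf$ of the large quotients and evaluate at a far occurrence of $x$, where all neighbours are $\ge x$. Only then is $\overline{1,1,144}$ the worst case.

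Second, minimizing one term at a time over continuations cannot give the lower bound on large quotients. The term $\frac{\alpha_m+\beta_m}{(\alpha_m-1)(1+\beta_m)}$ is small when $\beta_m$ is near $1$, and $\frac{\alpha_m+\beta_m}{(\alpha_m+2)(2-\beta_m)}$ is small when $\beta_m$ is near $0$. For, say, $a_m=15$, each can separately be pushed below the threshold. The missing idea is to eliminate $\beta_m$ between these two terms, which move in opposite directions in $\beta_m$. Writing the bound as $\frac{2}{3-\epsilon}$, both being smaller forces $\epsilon((2\alpha_m+1)^2+9)>54$. At $\frac{\sqrt{493}}{33}$ this reads exactly $\alpha_m>\frac{21+\sqrt{493}}2\approx21.60$. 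This gives $a_m\ge21$, and through the resulting bounds on $\beta_m$ (the second term increases with $\beta_m$) it leaves only runs of two or four $1$'s.

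The run of four $1$'s is where your plan has no argument. Step 2 counts it both as excluded and as the critical case, and tracking more digits with extremal bounds cannot close it. In $\overline{1,1,1,1,21}$ both of the terms above equal $\frac{\sqrt{493}}{33}$ at every $21$, so there is no slack. Bounds that minimize the terms separately fall strictly below the threshold however many digits you fix. Since the interval is open at $\frac{\sqrt{493}}{33}$, the proof must use that this exact value is excluded.

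The paper does this by a rigidity argument:
\begin{enumerate}
\item If a large $a_m$ far out is preceded by four $1$'s, then $\beta_m>\frac35$, and the second term forces $a_m=21$. Indeed, for $a_m\ge22$ one has $\alpha_m>22.5$, giving a value $>0.6734$.
\item The bound $\alpha_m>\frac{21+\sqrt{493}}2$ then rules out a following block $1,1,x$, because $[21;1,1,x,\dots]<[21;1,1,21]<21.52$.
\item So the next run is again four $1$'s ending in $21$. By induction the tail is exactly $\overline{21,1,1,1,1}$, whose $k_2^*$ is the excluded endpoint.
\end{enumerate}
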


	Motivated by these facts, we consider the Gauss-Cantor sets $$X(k)=\{x=[0;1,1,x_3,1,1,x_6,\cdots,]|k^2\leq x_{3n}\leq k^3\}$$ for $k\in \mathbb{N}_{>0}$.

	For every $\varepsilon>0$, if $k$ is large enough, we will have that $k_2^*(x)\leq \frac{2}{3}+\varepsilon$. The set $\Lambda(k)=G^2(X(k))\times X(k)$ is a horseshoe for the map $T^3$, where $G$ denotes the Gauss map. In analogy to the expression for $f$ given above, we will define a smooth by parts function which we also call $\tilde{f}:\Lambda(k)\to \mathbb{R}$ such that $L_{T^3,\tilde{f},\Lambda(k)}\subset L_2^*\cap (-\infty,\frac{2}{3}+\varepsilon)$. We will also prove that for large $k$, $HD(X(k))>\frac{1}{2}$, and therefore $HD(\Lambda(k))>1$. By the third statement in \Cref{thm:main}, we will have that $HD(L_2^*\cap (-\infty,\frac{2}{3}+\varepsilon))=1$.
	
	The main conceptual reason that the continuity of dimension fails for this case is that the analysis of this spectrum cannot be reduced to studying dynamical Lagrange spectra associated to horseshoes. This will happen because the numbers in Cantor sets $X(k)$ have very large digits in the continued fraction expansion, so the interesting parts of this spectrum cannot be reduced to dynamical Lagrange spectra associated to a horseshoe, which is associated to a finite type subshift of $\Sigma$.
	
	\subsection{Structure of the paper}
	
	The paper is organized as follows.
	In Section 2, we extend the result of Lima--Moreira--Villamil to the setting in which the function is given by the maximum of finitely many smooth functions.
	In Section 3.1, we provide a new proof of the formulas for $k_2(\alpha)$ and $k_2^\ast(\alpha)$.
	In Section 3.2, we describe the dynamical characterization of $L_2$ and $L_2^\ast$ as dynamical Lagrange spectra.
	In Section 3.3, we prove the continuity of the dimension function for $L_2$.
	In Section 3.4, we establish the discontinuity stated in \Cref{thm:discontinuous}.
	In Section 3.5, we prove \Cref{lem:reciprocal_lemma}.
	Finally, in Section 3.6, we prove the lower bound $HD(X(k))>\frac{1}{2}$ for the Hausdorff dimension of $X(k)$ for large $k$, which is used in Section 3.4.

	\section{Proof of the continuity of dimension for maxima of smooth functions}
	
	In this section we explain how to extend the result of Lima, Moreira and Villamil for the case where the height function $f$ is of the form $f=\max_{1\leq j\leq N} f_j$, where each $f_j$ satisfies adequate generic condition for that theorem. In their paper, the residual $\tilde{\mathcal{R}}_{\varphi,\Lambda}$ is the set $\tilde{\mathcal{R}}_{\varphi,\Lambda}=\bigcap_{\tilde{\Lambda}\subset \Lambda \text{ subhorseshoe}}H_{\tilde{\Lambda}}\cap \mathcal{R}_{\varphi,\Lambda}$, where $\mathcal{R}_{\varphi,\Lambda}=\{f\in C^r(S,\mathbb{R}):\nabla f(z)\neq 0 \forall z\in \Lambda \}$ and $H_{\tilde{\Lambda}}=\{f\in C^r(S,\mathbb{R}):|M_{f,\tilde{\Lambda}}|=1 \text{ and } Df_z(e_z^{s,u})\neq 0,\forall z\in M_{f,\tilde{\Lambda}}\}$, where $M_{f,\tilde{\Lambda}}$ is the set of maxima of $f$ in $\Lambda$. However, in the definition of $H_{\tilde{\Lambda}}$, the condition that $|M_{f,\tilde{\Lambda}}|=1$ is not necessary. They use it to prove that, for a subhorseshoe $\tilde{\Lambda}$ and $f\in \tilde{\mathcal{R}}_{\varphi,\Lambda}$, $HD(\ell_{\varphi,f}(\tilde{\Lambda}))=HD(m_{\varphi,f}(\tilde{\Lambda}))=\min\{1,HD(\tilde{\Lambda})\}$. To prove this, they use a construction from Moreira-Romana \cite{MR}, where they consider the unique point $x_M\in \tilde{\Lambda}$ and construct a subhorseshoe $\tilde{\Lambda}^{\varepsilon}\subset \tilde{\Lambda}\setminus \{x_M\}$ with $HD(\tilde{\Lambda}^{\varepsilon})>HD(\tilde{\Lambda}) (1-\varepsilon)$. The maximum of $f|_{\tilde{\Lambda}}$ will be $f(x_M)-\delta=\max_{z\in \tilde{\Lambda}} f(z)-\delta$, for some $\delta>0$, so they can construct points in $\tilde{\Lambda}$ whose iterates become close to $x_M$ in a controlled set of positions, and the value of $f$ elsewhere is at most $f(x_M)-\delta/2$, so the Markov and Lagrange value are realized only by these iterates. This construction can also be done if we use instead $G_{\tilde{\Lambda}}=\{f\in C^r(S,\mathbb{R})|\text{ and }Df_z(e^{s,u}_z)\neq 0,\forall z\in M_{f,\tilde{\Lambda}}\}$ in the place of $H_{\tilde{\Lambda}}$. It will instead use proposition 1 from Lima-Moreira-Villamil. In the conservative case it says that, for $f\in \mathcal{R}_{\varphi,\Lambda}$, $t \in \mathbb{R}$, $\epsilon>0$, there exists $\delta>0$ and a subhorseshoe $\Lambda'\subset \Lambda_{t-\delta}$ such that $HD(\Lambda')>HD(\Lambda)(1-\varepsilon)$. Using this proposition for $\tilde{\Lambda}$ instead of $\Lambda$ and $t=\max_{z\in \tilde{\Lambda}}f(z)$, using $\tilde{\Lambda}'$ in the role of $\tilde{\Lambda}^{\varepsilon}$ and choosing any point $x_M\in M_{f,\tilde{\Lambda}}$, the same construction can be done. Therefore, we are going to prove it in the case where the horseshoe $\Lambda$ satisfies the same hypotheses as in Lima-Moreira-Villamil and $f_j\in \mathcal{R}_{\varphi,f}'$ for every $1\leq j\leq N$, where $\mathcal{R}_{\varphi,f}'=\bigcap_{\tilde{\Lambda}\subset \Lambda \text{ subhorseshoe}}G_{\tilde{\Lambda}}\cap \mathcal{R}_{\varphi,\Lambda}$
	
	Define $\hat{S}=S\times \{1,\cdots,N\}$, $\hat{\Lambda}=\Lambda\times \{1,\cdots,N\}$ and for $\varphi:S\to S$, define $\hat{\varphi}:\hat{S}\to \hat{S}$ by \[\hat{\varphi}(x,j)=\begin{cases}
		(x,j+1), \text{ if} & 1\leq j\leq N-1\\
		(\varphi(x),1), \text{ if} & j = N
	\end{cases}\]
	
	This is a Rokhlin tower extension of $\Lambda$. $\hat{\Lambda}$ is also a conservative horseshoe for $\hat{\varphi}$ on the surface $\hat{S}=S\times \{1,\cdots,N\}$ with the same hyperbolic splitting as $\Lambda$.
	
	We then define the function $\hat{f}:\hat{\Lambda}\to \mathbb{R}$ by $\hat{f}(x,j)=f_j(x)$. These data satisfy the corresponding generic hypotheses  $\hat{\Lambda}\in \tilde{\mathcal{U}}$ and $\hat{f}\in \mathcal{R}_{\hat{\varphi},\hat{\Lambda}}'$ for $\hat{\Lambda} \subset \hat{S}$.
	
	Now, we see that for $(x,j)\in \hat{\Lambda}$, we have that $$\limsup
	_{n\to +\infty}\hat{f}(\hat{\varphi}^n(x,j))=\limsup
	_{k\to +\infty}\max_{n\in [kN,(k+1)N-1]}\hat{f}(\hat{\varphi}^n(x,j))=\limsup
	_{k\to +\infty}\max_{1\leq l\leq N}f_l(\varphi^k(x))=\limsup
	_{n\to +\infty}f(\varphi^n(x))$$ and $$\sup
	_{n\in \mathbb{Z}}\hat{f}(\hat{\varphi}^n(x,j))=\sup
	_{k\in \mathbb{Z}}\max_{n\in [kN,(k+1)N-1]}\hat{f}(\hat{\varphi}^n(x,j))=\sup
	_{k\in \mathbb{Z}}\max_{1\leq l\leq N}f_l(\varphi^k(x))=\sup
	_{n\in \mathbb{Z}}f(\varphi^n(x))$$
	
	This implies that $L_{\varphi,f,\Lambda}=L_{\hat{\varphi},\hat{f},\hat{\Lambda}}$ and $M_{\varphi,f,\Lambda}=M_{\hat{\varphi},\hat{f},\hat{\Lambda}}$ and therefore the continuity of the dimension function of the spectra is reduced to the smooth case.

	This argument shows that the theorem of continuity of dimension holds for the generic case where the function is the maximum of a finite number of smooth functions. However, for this paper we are more interested in verifying these conditions for practical applications. 
	
	\begin{corollary}\label{cor:continuity}
		Let $\Lambda$ be a horseshoe for the map $\varphi:\Lambda\to\Lambda$ and $f:\Lambda\to \mathbb{R}$ be a function of the form $f(x)=\max_{1\leq j\leq N}f_j(x)$, where each $f_j$ is a smooth function defined in a neighborhood of $\Lambda$ such that for every $x\in \Lambda$ $Df_j(x)(e^{s,u}_x)\neq 0$. Then \Cref{thm:main} holds for $(\varphi,\Lambda,f)$.
	\end{corollary}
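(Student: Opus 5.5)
The plan is to reduce the corollary to the Rokhlin tower construction just described. We must check that the concrete hypothesis ``$Df_j(x)(e^{s,u}_x)\neq 0$ for every $x\in\Lambda$'' (together with the standing assumptions on $(\varphi,\Lambda)$, namely $\varphi\in\tilde{\mathcal U}$) places each $f_j$ in the residual set $\mathcal{R}'_{\varphi,\Lambda}=\bigcap_{\tilde\Lambda}G_{\tilde\Lambda}\cap\mathcal{R}_{\varphi,\Lambda}$. The argument above then applies verbatim to $\hat\varphi,\hat\Lambda,\hat f$.

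The first step is to verify $f_j\in\mathcal{R}_{\varphi,\Lambda}$. If $\nabla f_j(z)=0$ at some $z\in\Lambda$, then $Df_j(z)$ vanishes on every vector, in particular on $e^s_z$. This contradicts the hypothesis, so $\nabla f_j\neq0$ on $\Lambda$.

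The second step is to verify $f_j\in G_{\tilde\Lambda}$ for every subhorseshoe $\tilde\Lambda\subset\Lambda$. The condition defining $G_{\tilde\Lambda}$ only asks that $Df_j(z)(e^{s,u}_z)\neq0$ at the points $z\in M_{f_j,\tilde\Lambda}$ where $f_j|_{\tilde\Lambda}$ attains its maximum. Since $M_{f_j,\tilde\Lambda}\subset\tilde\Lambda\subset\Lambda$ and the transversality holds at every point of $\Lambda$, this is automatic. Here it matters that, as explained before the statement, the uniqueness-of-maximum condition $|M_{f,\tilde\Lambda}|=1$ from Lima--Moreira--Villamil has been dropped; otherwise it would not follow from a pointwise condition. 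The argument replacing it uses Proposition 1 of Lima--Moreira--Villamil applied to $\tilde\Lambda$ with $t=\max_{\tilde\Lambda}f$, together with an arbitrary choice of $x_M\in M_{f,\tilde\Lambda}$.

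The third step is to pass to the tower. On $\hat\Lambda=\Lambda\times\{1,\dots,N\}$ the stable and unstable directions of $\hat\varphi$ at $(x,j)$ are those of $\varphi$ at $x$, and $\hat f(x,j)=f_j(x)$ is smooth on each sheet. The same two verifications therefore show $\hat f\in\mathcal{R}'_{\hat\varphi,\hat\Lambda}$: every subhorseshoe of $\hat\Lambda$ meets the sheets in subsets of $\Lambda$, and its maxima are points where some $f_j$ is transverse. By the smooth theorem (Lima--Moreira--Villamil with the weakened residual set) applied to $(\hat\varphi,\hat\Lambda,\hat f)$, and by the identities $L_{\varphi,f,\Lambda}=L_{\hat\varphi,\hat f,\hat\Lambda}$ and $M_{\varphi,f,\Lambda}=M_{\hat\varphi,\hat f,\hat\Lambda}$, the conclusions of \Cref{thm:main} follow.

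The one point needing care is the correspondence between the sets $\Lambda_t$, $K^{u}_t$, $K^{s}_t$. We have $\hat\Lambda_t=\{(x,j):\sup_n\hat f(\hat\varphi^n(x,j))\le t\}=\Lambda_t\times\{1,\dots,N\}$, so the Hausdorff dimensions coincide and the dimension statements transfer.

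I expect the main obstacle to be justifying that $\hat\varphi$ itself lies in the residual set $\tilde{\mathcal U}$, since $\hat\varphi$ is not literally a diffeomorphism near $\varphi_0$ in the original family. The conditions defining $\tilde{\mathcal U}$ concern the Cantor sets and the non-essential-affinity or renormalization properties of $\Lambda$. For $\hat\Lambda$ these are the same Cantor sets, with the dynamics $\hat\varphi^N=\varphi\times\mathrm{id}$ on each sheet. I would argue that these properties are intrinsic to the stable and unstable Cantor sets and are inherited. Alternatively, one can note that the proof of Lima--Moreira--Villamil uses only these Cantor-set properties together with the transversality of $\hat f$.
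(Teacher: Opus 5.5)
Your proposal is correct and follows the paper's own (implicit) argument. The pointwise transversality hypothesis puts every $f_j$ in $\mathcal{R}'_{\varphi,\Lambda}$, which works precisely because the uniqueness-of-maximum condition has been dropped, and the Rokhlin tower $(\hat\varphi,\hat\Lambda,\hat f)$ then reduces the statement to the smooth theorem. Your caveat that $\hat\varphi\in\tilde{\mathcal U}$ needs justification is fair, but the paper simply asserts it at the same level of detail, so this is not a gap relative to the paper.
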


	\section{The second Lagrange spectra}
	
	\subsection{Deduction of the formulas for $k_2$ and $k_2^*$}\label{chap:second_spectra_formulas}
	
	In order to give a dynamical characterization for $\mathbb{L}_2$ and $\mathbb{L}_2^*$ we will, for a given $\alpha \in \mathbb{R}\setminus \mathbb{Q}$, deduce formulas for $$k_2(\alpha)=\underset{(p,q)\neq(p_{n},q_{n}),\forall n\in \mathbb{N}}{\underset{|p|,|q|\rightarrow \infty}{\limsup}}|q(q\alpha-p)|^{-1}$$ and $$k_2^*(\alpha)=\underset{\frac{p}{q}\neq\frac{p_{n}}{q_{n}},\forall n\in \mathbb{N}}{\underset{|p|,|q|\rightarrow \infty}{\limsup}}|q(q\alpha-p)|^{-1},$$ where $\frac{p_n}{q_n}$ are the convergents of the continued fraction of $\alpha=[0;a_1,a_2,\cdots]$. The formula for $k_2$ is the same as in Moshchevitin's paper. The expression of the formula for $k_2^*$ given below will be slightly different than the one given by Moshchevitin, but the formulas will be equivalent. The method of proof for both formulas is, however, different.

	\begin{lemma}
		If $a_{n}=1$, then
		\begin{equation*}
			\frac{\alpha_n+\beta_n}{(2\alpha_n-1)(2\beta_n+1)}=\frac{\alpha_{n+1}+\beta_{n+1}}{(2-\beta_{n+1})(\alpha_{n+1}+2)}.
		\end{equation*}
	\end{lemma}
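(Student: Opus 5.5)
The identity is a direct algebraic substitution. I will express $\alpha_n$ and $\beta_n$ in terms of $\alpha_{n+1}$ and $\beta_{n+1}$, using the recursions of the continued fraction together with the hypothesis $a_n=1$.

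\emph{Step 1: the recursions.} Since $\alpha_n=[a_n;a_{n+1},\dots]=a_n+\frac{1}{\alpha_{n+1}}$, we get $\alpha_n=1+\frac{1}{\alpha_{n+1}}$. Also $\beta_{n+1}=[0;a_n,a_{n-1},\dots,a_1]=\frac{1}{a_n+\beta_n}=\frac{1}{1+\beta_n}$, so $\beta_n=\frac{1}{\beta_{n+1}}-1$.

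\emph{Step 2: rewrite each factor of the left-hand side.} Substituting the recursions gives
\begin{align*}
\alpha_n+\beta_n&=\frac{1}{\alpha_{n+1}}+\frac{1}{\beta_{n+1}}=\frac{\alpha_{n+1}+\beta_{n+1}}{\alpha_{n+1}\beta_{n+1}},\\
2\alpha_n-1&=1+\frac{2}{\alpha_{n+1}}=\frac{\alpha_{n+1}+2}{\alpha_{n+1}},\\
2\beta_n+1&=\frac{2}{\beta_{n+1}}-1=\frac{2-\beta_{n+1}}{\beta_{n+1}}.
\end{align*}

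\emph{Step 3: take the quotient.} The common factor $\alpha_{n+1}\beta_{n+1}$ cancels between numerator and denominator, which yields exactly $\frac{\alpha_{n+1}+\beta_{n+1}}{(2-\beta_{n+1})(\alpha_{n+1}+2)}$.

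\emph{Well-definedness.} All quantities are nonzero: $\alpha_{n+1}>1$ and $0<\beta_{n+1}\le 1<2$, so no denominator vanishes.

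There is no real obstacle here. The only point needing care is the correct form of the backward recursion $\beta_{n+1}=1/(a_n+\beta_n)$, which follows from the convention $\beta_k=[0;a_{k-1},\dots,a_1]$ fixed in the introduction. This identity is what reconciles Moshchevitin's $\kappa$ with the $\gamma$ formula stated earlier: their $a=1$ branches are indexed at consecutive positions $k+1$ and $k+2$.
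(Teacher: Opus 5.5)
Your proof is correct and is the same as the paper's: substitute $\alpha_n=1+\frac{1}{\alpha_{n+1}}$ and $\beta_n=\frac{1}{\beta_{n+1}}-1$, then cancel the common factor $\alpha_{n+1}\beta_{n+1}$. Your check that no denominator vanishes is a small addition that the paper leaves implicit.
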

	
	\begin{proof}
		When $a_{n}=1$, it holds that $\alpha_n=1+\frac{1}{\alpha_{n+1}}$ and $\beta_n=\frac{1}{\beta_{n+1}}-1$. Therefore, 
		\begin{align*}
			\frac{\alpha_n+\beta_n}{(2\alpha_n-1)(2\beta_n+1)}&=\frac{(1+\frac{1}{\alpha_{n+1}})+(-1+\frac{1}{\beta_{n+1}})}{(\frac{2}{\beta_{n+1}}-1)(1+\frac{2}{\alpha_{n+1}})} \\
			&=\frac{(\frac{1}{\alpha_{n+1}}+\frac{1}{\beta_{n+1}})}{(\frac{2}{\beta_{n+1}}-1)(1+\frac{2}{\alpha_{n+1}})} \\
			&=\frac{(\frac{\alpha_{n+1}+\beta_{n+1}}{\alpha_{n+1}\beta_{n+1}})}{(\frac{2-\beta_{n+1}}{\beta_{n+1}})(\frac{\alpha_{n+1}+2}{\alpha_{n+1}})} \\
			&=\frac{\alpha_{n+1}+\beta_{n+1}}{(2-\beta_{n+1})(\alpha_{n+1}+2)}.
		\end{align*}
	\end{proof}
	
	\begin{lemma}
		Let $\alpha\in\R\setminus\Q$, such that $\alpha=[a_0;a_1,a_2,\dots]$. Let $p\in \mathbb{Z}$, $q\in \mathbb{N}_{>0}$ such that $(p,q)\neq (p_{n},q_{n})$, $\forall n\in \mathbb{N}$. Let $k\in \mathbb{N}$ such that $q_{k}\leq q < q_{k+1}$ and suppose that 
		\begin{equation*}
			|q(q\alpha-p)|^{-1} = \max\{|s(s\alpha-t)|^{-1},t\in\Z,s\in\N\cap[q_k,q_{k+1})\}.
		\end{equation*}
		Then there is a $k$ such that either
		\begin{enumerate}
			\item
			\begin{equation*}
				(p,q)=(2p_k,2q_k) \quad\text{and}\quad |q(q\alpha-p)|^{-1}= \frac{\alpha_{k+1}+\beta_{k+1}}{4}
			\end{equation*}
			\item
			\begin{equation*}
				(p,q)=(2p_{k-1},2q_{k-1}) \quad\text{and}\quad |q(q\alpha-p)|^{-1}= \frac{\alpha_{k}+\beta_{k}}{4}
			\end{equation*}
			\item
			\begin{equation*}
				\frac{p}{q}=\frac{p_k+p_{k-1}}{q_k+q_{k-1}}, \quad\text{and}\quad \abs{q(q\alpha-p)}^{-1} =\frac{\alpha_{k+1}+\beta_{k+1}}{(1+\beta_{k+1})(\alpha_{k+1}-1)}
			\end{equation*}
			\item 
			\begin{equation*}
				\frac{p}{q}=\frac{2p_{k}-p_{k-1}}{2q_{k}-q_{k-1}}, \quad\text{and}\quad |q(q\alpha-p)|^{-1}=\frac{\alpha_{k+1}+\beta_{k+1}}{(\alpha_{k+1}+2)(2-\beta_{k+1})}.
			\end{equation*}
			\item 
			\begin{equation*}
				\frac{p}{q}=\frac{(a_{k+1}-1)p_{k}+p_{k-1}}{(a_{k+1}-1)q_{k}+q_{k-1}}, \quad\text{and}\quad |q(q\alpha-p)|^{-1}=\frac{\alpha_{k+1}+\beta_{k+1}}{(\alpha_{k+1}-a_{k+1}+1)(a_{k+1}-1+\beta_{k+1})}
			\end{equation*}
		\end{enumerate}
	\end{lemma}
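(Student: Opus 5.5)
We write every candidate in the lattice basis given by consecutive convergents. Since $p_kq_{k-1}-p_{k-1}q_k=\pm1$, every pair $(p,q)$ can be written uniquely as
\[
(p,q)=u(p_k,q_k)+v(p_{k-1},q_{k-1}),\qquad u,v\in\Z .
\]
Put $\delta_j=q_j\alpha-p_j$. We use the standard identities $\delta_{k-1}/\delta_k=-\alpha_{k+1}$ and $q_{k-1}/q_k=\beta_{k+1}$, together with $q_k|\delta_k|=1/(\alpha_{k+1}+\beta_{k+1})$. With these,
\[
|q(q\alpha-p)|=q_k|\delta_k|\,\bigl|u+v\beta_{k+1}\bigr|\,\bigl|u-v\alpha_{k+1}\bigr|,
\]
so that
\[
|q(q\alpha-p)|^{-1}=\frac{\alpha_{k+1}+\beta_{k+1}}{|u+v\beta_{k+1}|\,|u-v\alpha_{k+1}|}.
\]
The whole problem then becomes: minimize $F(u,v)=|u+v\beta|\,|u-v\alpha|$ (write $\alpha=\alpha_{k+1}$, $\beta=\beta_{k+1}$) over integer pairs subject to $q_k\le q<q_{k+1}$ and $(p,q)$ not a convergent.

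Next we cut down the admissible $(u,v)$. The constraint $q_k\le uq_k+vq_{k-1}<q_{k+1}=a_{k+1}q_k+q_{k-1}$ reads $1\le u+v\beta<a_{k+1}+\beta$.

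If $v=0$, then $u\geq 2$ (since $u=1$ gives a convergent) and $F=u^2$. The minimum is $u=2$, which gives case (1) with value $(\alpha+\beta)/4$.

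If $v\neq0$, the factor $|u-v\alpha|$ is small only when $u/v$ is close to $\alpha$. We split by the sign of $v$.

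For $v\ge1$, the constraint forces $1\le u\le a_{k+1}$ roughly. For $v\ge2$, the factor $|u+v\beta|$ is already large, and the distance $|u-v\alpha|$ from an integer is at least that for $v=1$. So we expect the only competitive choices to be $v=1$ with $u=a_{k+1}-1$, giving case (5) (where $F=(a-1+\beta)(\alpha-a+1)$), and $u=1$, giving the mediant $(p_k+p_{k-1},q_k+q_{k-1})$ with $F=(1+\beta)(\alpha-1)$, which is case (3). The choice $u=a_{k+1}$ with $v=1$ is the convergent $p_{k+1}/q_{k+1}$ and is excluded by $q<q_{k+1}$. Intermediate values $1<u<a_{k+1}-1$ have $F$ larger than at one of the endpoints, because $F$ is a concave quadratic in $u$ on that range.

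For $v\le -1$ we need $u\ge 1-v\beta$. The best option is $v=-1$, $u=2$, i.e.\ $(2p_k-p_{k-1},2q_k-q_{k-1})$, with $F=(2-\beta)(\alpha+2)$; this is case (4).

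Finally, pairs that are multiples $(2p_{k-1},2q_{k-1})$ may enter when the convergent indexing shifts. That is, $q$ may lie in $[q_k,q_{k+1})$ while being $2q_{k-1}$ if $a_k=1$, and so on. Writing this in the basis gives $u=0$, $v=2$ relative to index $k$, which we re-express at index $k-1$ as case (2). For the remaining $(u,v)$ with $|v|\ge2$, or with $u$ away from these values, we show $F$ exceeds one of the listed values by elementary monotonicity in $|u|$ and $|v|$.

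The main obstacle is this last comparison step: proving rigorously that every other admissible $(u,v)$ gives a strictly larger $F$ than some listed candidate. This requires careful handling of small $a_{k+1}$ (for example $a_{k+1}=1$ or $2$, where the candidates coincide or degenerate), and I would treat those small cases by hand first.
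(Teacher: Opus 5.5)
Your lattice-basis setup is correct, and it is the paper's argument in different coordinates. One has $qp_k-pq_k=(-1)^{k-1}v$, so $|v|$ is the paper's $|qp_k-pq_k|$, your $u$ is its $b$, and $v>0$ exactly when $p/q$ lies on the same side of $p_k/q_k$ as $\alpha$. Your treatment of $v=0$ and of $v=1$ (concavity of $(u+\beta)(\alpha-u)$ on $1\le u\le a_{k+1}-1$) matches the paper. The gap is the step you flag yourself, and the justification you sketch for it does not work. For $v\ge2$ the factor $|u+v\beta|$ is not large: $u+v\beta=q/q_k\in[1,a_{k+1}+\beta)$ for every admissible pair, whatever $v$ is. Also, $u$ need not lie in $[1,a_{k+1}]$; for example $(u,v)=(0,2)$ and $(-1,3)$ can be admissible. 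Because the range constraint couples $u$ and $v$, monotonicity in $|u|$ and $|v|$ separately excludes nothing. The crude bounds ($|u+v\beta|\ge1$, $|u-v\alpha|>\alpha+1$) give only $F>\alpha+1$, which does not beat the mediant's $(1+\beta)(\alpha-1)$ once $\beta(\alpha-1)>2$. Similarly, for $v\le-1$ you assert that $u=2$, $v=-1$ is best without ever excluding $v\le-2$.

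The missing idea is the paper's uniform estimate combined with a fixed benchmark.
\begin{itemize}
\item Write $s=u+v\beta=q/q_k$. Then $u-v\alpha=s-v(\alpha+\beta)$, so $F=s\,|v(\alpha+\beta)-s|$.
\item For $v\le-1$ this gives $F>|v|(\alpha+\beta)$, i.e.\ $|q(q\alpha-p)|^{-1}<1/|v|$.
\item For $v\ge1$, since $s<a_{k+1}+\beta<\alpha+\beta$, it gives $F>(v-1)(\alpha+\beta)$, i.e.\ $|q(q\alpha-p)|^{-1}<1/(v-1)$. These are the paper's bounds $|\alpha-p/q|>|qp_k-pq_k|/(qq_k)$ and $|\alpha-p/q|>(|qp_k-pq_k|-1)/(qq_k)$.
\item The mediant $(u,v)=(1,1)$ has value $\frac{\alpha+\beta}{(1+\beta)(\alpha-1)}>\frac{1}{1+\beta}>\frac12$. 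So every $v\le-2$ and every $v\ge3$ is discarded at once.
\item For $v=-1$ the value is decreasing in $s$, which forces $u=2$, giving case (4).
\item For $v=2$ one has $u\ge0$. Here $u=0$ is case (2), and $u\ge1$ loses to the mediant because $(u+2\beta)(2\alpha-u)>(1+2\beta)\alpha>(1+\beta)(\alpha-1)$.
\end{itemize}
This benchmark requires $a_{k+1}\ge2$, since otherwise the mediant is $(p_{k+1},q_{k+1})$; the paper's proof relies on it in the same way. Your plan to settle $a_{k+1}=1$ by hand cannot succeed as the statement is written. For $\alpha=[0;1,2,\overline{1}]$ and $k=2$, the only admissible $q$ is $q_2=3$. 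The best non-convergent pair there is $(3,3)=3(p_1,q_1)$, which is not described by any of (1)--(5). That case is harmless for the $\limsup$ formulas, but it has to be excluded rather than proved.
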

	
	\begin{proof}
		
		Let $k\in \mathbb{N}$ such that $q_{k}\leq q < q_{k+1}$. We would like to see what are the possible values for $(p,q)$ that can correspond to the best approximation of $\alpha$ by non convergents with $q$ in this range.
		
		If $k$ is even, we have $\frac{p_{k}}{q_{k}}< \alpha <\frac{p_{k+1}}{q_{k+1}}$, and if $k$ is odd, we have $\frac{p_{k}}{q_{k}}>\alpha>\frac{p_{k+1}}{q_{k+1}}$. We will assume without loss of generality that $k$ is even.
		
		As was seen before, $$\abs{\alpha-\frac{p_{k}}{q_{k}}}=\frac{1}{(\alpha_{k+1}+\beta_{k+1})q_{k}^{2}},$$ where $\alpha_{k+1}=[a_{k+1};a_{k+2},a_{k+3},...]$ and $\beta_{k+1}=[0;a_{k},a_{k-1},...,a_{1}]=\frac{q_{k-1}}{q_{k}}$.
		
		The first family of $(p,q)$ to consider are the ones such that $\frac{p}{q}=\frac{p_{k}}{q_{k}}$. In this case we have $(p,q)=(np_{k},nq_{k})$ with $n\geq 2$, so 
		\begin{equation*}
			|q(q\alpha-p)|^{-1}=\frac{\alpha_{k+1}+\beta_{k+1}}{n}\leq \frac{\alpha_{k+1}+\beta_{k+1}}{4},
		\end{equation*}
		with equality for $(p,q)=(2p_{k},2q_{k})$.
		
		Suppose now that $\frac{p}{q}\neq \frac{p_{k}}{q_{k}}$. Since we are assuming that $k$ is even, we have $\frac{p_{k}}{q_{k}}< \alpha <\frac{p_{k+1}}{q_{k+1}}$, and $|\frac{p_{k}}{q_{k}}-\frac{p}{q}|=\frac{|qp_{k}-pq_{k}|}{qq_{k}}\geq \frac{1}{qq_{k}}>\frac{1}{q_{k+1}q_{k}}=|\frac{p_{k+1}}{q_{k+1}}-\frac{p_{k}}{q_{k}}|$. This implies that $\frac{p}{q}$ is outside the interval $[\frac{p_{k}}{q_{k}},\frac{p_{k+1}}{q_{k+1}}]$.
		
		Let us assume that $a_{k+1}\geq 2$ for infinitely many positive integers $k$. For those values of $k$, consider the approximation $\frac{p}{q}=\frac{p_{k}+p_{k-1}}{q_{k}+q_{k-1}}$ of $\alpha$. We have $q_{k}<q=q_{k}+q_{k-1}<q_{k+1}$.
		
		We have 
		\begin{equation*}
			\begin{split}
				\abs{\alpha-\frac{p}{q}} &=\abs{\alpha-\frac{p_{k}+p_{k-1}}{q_{k}+q_{k-1}}}\\
				&=\abs{\frac{\alpha_{k+1}p_{k}+p_{k-1}}{\alpha_{k+1}q_{k}+q_{k-1}}-\frac{p_{k}+p_{k-1}}{q_{k}+q_{k-1}}}\\
				&=\abs{\frac{(\alpha_{k+1}-1)(p_{k}q_{k-1}-p_{k-1}q_{k})}{(\alpha_{k+1}q_{k}+q_{k-1})(q_{k}+q_{k-1})}}\\
				&=\abs{\frac{(\alpha_{k+1}-1)(-1)^{k-1}}{(\alpha_{k+1}q_{k}+q_{k-1})(q_{k}+q_{k-1})}}\\
				&=\frac{\alpha_{k+1}-1}{(\alpha_{k+1}q_{k}+q_{k-1})(q_{k}+q_{k-1})}
			\end{split}
		\end{equation*}
		
		and
		
		\begin{equation*}
			\begin{split}
				\abs{q(q\alpha-p)}^{-1} &=\abs{q^{2}(\alpha-\frac{p}{q})}^{-1}\\
				&=\abs{(q_{k}+q_{k-1})^{2}(\alpha-\frac{p_{k}+p_{k-1}}{q_{k}+q_{k-1}})}^{-1}\\
				&=\frac{\alpha_{k+1}q_{k}+q_{k-1}}{(q_{k}+q_{k-1})(\alpha_{k+1}-1)}\\
				&=\frac{\alpha_{k+1}+\beta_{k+1}}{(1+\beta_{k+1})(\alpha_{k+1}-1)}
			\end{split}
		\end{equation*}
		
		As $0<\beta_{k+1}<1$, one has $|q(q\alpha-p)|^{-1}>\frac{1}{1+\beta_{k+1}}>\frac{1}{2}$
		
		Since $\frac{p}{q}$ is outside the interval $[\frac{p_{k}}{q_{k}},\frac{p_{k-1}}{q_{k-1}}]$, we have two cases. If $\frac{p}{q}<\frac{p_{k}}{q_{k}}$, then $$\abs{\alpha-\frac{p}{q}}>\abs{\frac{p_{k}}{q_{k}}-\frac{p}{q}}=\frac{|qp_{k}-pq_{k}|}{qq_{k}}.$$
		
		If $\abs{qp_{k}-pq_{k}}\geq2$, we would have $$\abs{\alpha-\frac{p}{q}}>\frac{2}{qq_{k}}>\frac{2}{q^{2}},$$
		
		which implies that $$q\abs{q\alpha-p}=q^{2}\abs{\alpha-\frac{p}{q}}>2,$$
		
		and $$|q(q\alpha-p)|^{-1}< \frac{1}{2},$$
		
		so the approximation $\frac{p}{q}$ is worse than the approximation $\frac{p_{k}+p_{k-1}}{q_{k}+q_{k-1}}$, and we don't need to consider it.
		
		Now we treat the case where $$|qp_{k}-pq_{k}|=1.$$
		
		Since $qp_{k}-pq_{k}=qq_{k}(\frac{p_{k}}{q_{k}}-\frac{p}{q})>0$, we have that $qp_{k}-pq_{k}=1=(-1)^{k}=-(q_{k-1}p_{k}-p_{k-1}q_{k})$. Therefore, $p_{k}(q+q_{k-1})=q_{k}(p+p_{k-1})$ and, since $gcd(p_{k},q_{k})=1$, there is a positive integer $b$ such that $q+q_{k-1}=bq_{k}$ and $p+p_{k-1}=bp_{k}$. Since $q=bq_{k}-q_{k-1}>q_{k}$, we should have $b\geq 2$. Thus we have the error term 
		\begin{equation*}
			\begin{split}
				\abs{\alpha-\frac{p}{q}} &=\abs{\alpha-\frac{bp_{k}-p_{k-1}}{bq_{k}-q_{k-1}}}\\
				&=\abs{\frac{\alpha_{k+1}p_{k}+p_{k-1}}{\alpha_{k+1}q_{k}+q_{k-1}}-\frac{bp_{k}-p_{k-1}}{bq_{k}-q_{k-1}}}\\
				&=\frac{\alpha_{k+1}+b}{(\alpha_{k+1}q_{k}+q_{k-1})(bq_{k}-q_{k-1})}
			\end{split}
		\end{equation*}
		
		and then 
		\begin{align*}
			\abs{q(q\alpha-p)}^{-1}&=\abs{q^{2}(\alpha-\frac{p}{q})}^{-1} \\
			&=\abs{(bq_{k}-q_{k-1})^{2}(\alpha-\frac{p}{q})}^{-1}=\frac{\alpha_{k+1}q_{k}+q_{k-1}}{(\alpha_{k+1}+b)(bq_{k}-q_{k-1})}=\frac{\alpha_{k+1}+\beta_{k+1}}{(\alpha_{k+1}+b)(b-\beta_{k+1})}.
		\end{align*}
		$$$$
		
		As the function $h(b)=(\alpha_{k+1}+b)(b-\beta_{k+1})$ is increasing for $b\geq0$,it has minimum value for $b=2$, so the best approximation of this kind is $$\frac{p}{q}=\frac{2p_{k}-p_{k-1}}{2q_{k}-q_{k-1}},$$ which is such that $|q(q\alpha-p)|^{-1}=\frac{\alpha_{k+1}+\beta_{k+1}}{(\alpha_{k+1}+2)(2-\beta_{k+1})}$. Notice that we have $q_{k}<2q_{k}-q_{k-1}<q_{k+1}$. The other possibility is when $$\frac{p}{q}>\frac{p_{k}}{q_{k}}.$$
		
		In this case, 
		\begin{equation*}
			\begin{split}
				\abs{\alpha-\frac{p}{q}}>\abs{\frac{p_{k+1}}{q_{k+1}}-\frac{p}{q}}=\abs{\frac{p_{k}}{q_{k}}-\frac{p}{q}}-\abs{\frac{p_{k}}{q_{k}}-\frac{p_{k+1}}{q_{k+1}}}=\frac{|qp_{k}-pq_{k}|}{qq_{k}}-\frac{1}{q_{k}q_{k+1}}>\frac{|qp_{k}-pq_{k}|-1}{qq_{k}}.
			\end{split}
		\end{equation*}
		
		If $|qp_{k}-pq_{k}|\geq3$, we have $|\alpha-\frac{p}{q}|>\frac{2}{qq_{k}}$, then
		
		$$|q(q\alpha-p)|=q^{2}|\alpha-\frac{p}{q}|>\frac{2q}{q_{k}}\geq2$$
		
		which implies $|q(q\alpha-p)|^{-1}<\frac{1}{2}$, so the approximation $\frac{p}{q}$ is worse than $\frac{p_{k}+p_{k-1}}{q_{k}+q_{k-1}}$, and we don't need to consider it.
		
		If $|qp_{k}-pq_{k}|=2$, since $qp_{k}-pq_{k}=qq_{k}(\frac{p_{k}}{q_{k}}-\frac{p}{q})<0$, we will have $qp_{k}-pq_{k}=-2=2(q_{k-1}p_{k}-p_{k-1}q_{k}),$ so $p_{k}(q-2q_{k-1})=q_{k}(p-2p_{k-1})$, and thus there is an integer $b$ such that $q-2q_{k-1}=bq_{k}$ and $p-2p_{k-1}=bp_{k}$. Since $q\geq q_{k}$, we should have $b\geq0$. If $b=0$, $(p,q)=(2p_{k-1},2q_{k-1})$, and $\frac{p}{q}=\frac{p_{k-1}}{q_{k-1}}$, a case which we already considered, and is relevant for $\mathbb{L}_{2}$, but not for $\mathbb{L}^{*}_{2}$.
		
		Let us consider now the case $b\geq1$.We have $\frac{p}{q}=\frac{bp_{k}+2p_{k-1}}{bq_{k}+2q_{k-1}}$. Since $q=bq_{k}+2q_{k-1}<q_{k+1}=a_{k+1}q_{k}+q_{k-1}$, we should have 
		\begin{equation*}
			\begin{split}
				\abs{\alpha-\frac{p}{q}}=\abs{\frac{\alpha_{k+1}p_{k}+p_{k-1}}{\alpha_{k+1}q_{k}+q_{k-1}}-\frac{bp_{k}+2p_{k-1}}{bq_{k}+2q_{k-1}}}&=\abs{\frac{(2\alpha_{k+1}-b)(p_{k}q_{k-1}-p_{k-1}q_{k})}{(\alpha_{k+1}q_{k}+q_{k-1})(bq_{k}+2q_{k-1})}}\\
				&=\frac{2\alpha_{k+1}-b}{(\alpha_{k+1}q_{k}+q_{k-1})(bq_{k}+2q_{k-1})},
			\end{split}  
		\end{equation*}
		
		and $$|q(q\alpha-p)|^{-1}=\abs{q^{2}(\alpha-\frac{p}{q})}^{-1}=\frac{\alpha_{k+1}q_{k}+q_{k-1}}{(2\alpha_{k+1}-b)(bq_{k}+2q_{k-1})}=\frac{\alpha_{k+1}+\beta_{k+1}}{(2\alpha_{k+1}-b)(b+2\beta_{k+1})}.$$
		
		Since $1\leq b<a_{k+1}$,$2\alpha_{k+1}-b>\alpha_{k+1}$ and $b+2\beta_{k+1}\geq 1+2\beta_{k+1}$, so $$|q(q\alpha-p)|^{-1}<\frac{\alpha_{k+1}+\beta_{k+1}}{\alpha_{k+1}(1+2\beta_{k+1})}<\frac{\alpha_{k+1}+\beta_{k+1}}{(
			\alpha_{k+1}-1)(1+\beta_{k+1})},$$
		
		which just is that of the approximation $\frac{p}{q}=\frac{p_{k}+p_{k-1}}{q_{k}+q_{k-1}}$.
		
		So we don't need to consider approximation in the form of $\frac{bp_{k}+2p_{k-1}}{bq_{k}+2q_{k-1}}$ (this is also true even if we consider cases when $a_{k+1}=1$, since in this case $bq_{k}+2q_{k-1}\geq q_{k}+2q_{k-1}>q_{k+1}$)
		
		Finally, let us consider the cases when $|qp_{k}-pq_{k}|=1$,since $qp_{k}-pq_{k}<0$,we have $qp_{k}-pq_{k}=-1=q_{k-1}p_{k}-p_{k-1}q_{k}$, so $p_{k}(q-q_{k-1})=q_{k}(p-p_{k-1})$, and there is a positive integer $b$ such that $q-q_{k-1}=bq_{k}$ and $p-p_{k-1}=bp_{k}$, so $\frac{p}{q}=\frac{bp_{k}+p_{k-1}}{bq_{k}+q_{k-1}}$.
		
		Since $q=bq_{k}+q_{k-1}<q_{k+1}=a_{k+1}q_{k}+q_{k-1}$, we should have $b<a_{k+1}$,so $b\leq a_{k+1}-1$. We have:
		\begin{align*}
			\abs{\alpha-\frac{p}{q}}=\abs{\frac{\alpha_{k+1}p_{k}+p_{k-1}}{\alpha_{k+1}q_{k}+q_{k-1}}-\frac{bp_{k}+p_{k-1}}{bq_{k}+q_{k-1}}}&=\abs{\frac{(\alpha_{k+1}-b)(p_{k}q_{k-1}-p_{k-1}q_{k})}{(\alpha_{k+1}q_{k}+q_{k-1})(bq_{k}+q_{k-1})}}\\
			&=\frac{\alpha_{k+1}-b}{(\alpha_{k+1}q_{k}+q_{k-1})(bq_{k}+q_{k-1})}
		\end{align*}
		and 
		\begin{equation*}
			|q(q\alpha-p)|^{-1}=|q^{2}(\alpha-\frac{p}{q})|^{-1}=\frac{\alpha_{k+1}q_{k}+q_{k-1}}{(\alpha_{k+1}-b)(bq_{k}+q_{k-1})}=\frac{\alpha_{k+1}+\beta_{k+1}}{(\alpha_{k+1}-b)(b+\beta_{k+1})}.
		\end{equation*}
		
		Since the function $q(b)=(\alpha_{k+1}-b)(b+\beta_{k+1})$ gets its minimum for $b\in [1,\alpha_{k+1}-1]$ when $b=1$ or $b=a_{k+1}-1$. Specifically, the minimum is attained for $b=1$ if $\alpha_{k+1}-a_{k+1}\geq \beta_{k+1}$. Otherwise,it is attained for $b=a_{k+1}-1$.
		
		If we consider cases when $a_{k+1}=1$, we can not consider these approximations, since $q=bq_{k}+q_{k-1}\geq q_{k}+q_{k-1}$ in these cases. Also, if $a_{k+1}=1$, in the case $\frac{p}{q}<\frac{p_k}{q_k}$ aforementioned, we should have $b=2$ since, if $b\geq 3$,$q=bq_{k}-q_{k-1}\geq 3q_{k}-q_{k-1}>2q_{k}>q_{k}+q_{k-1}=q_{k+1}$.
		
	\end{proof}

	Analogously to Moshchevitin paper, let us denote the functions
	\begin{align*}
		\kappa_1(k)&:=\frac{\alpha_{k+1}+\beta_{k+1}}{(\alpha_{k+1}-1)(1+\beta_{k+1})}, \\
		\kappa_2(k)&:=\frac{\alpha_{k+1}+\beta_{k+1}}{(\alpha_{k+1}-a_{k+1}+1)(a_{k+1}-1+\beta_{k+1})}, \\
		\kappa_3(k)&:=\frac{\alpha_{k+1}+\beta_{k+1}}{(2-\beta_{k+1})(\alpha_{k+1}+2)}, \\
		\kappa_4(k)&:=\frac{\alpha_{k+1}+\beta_{k+1}}{4}.
	\end{align*}

	\subsubsection{The case of $\mathbb{L}_{2}$}

	\begin{lemma}
		For $\alpha\in\R\setminus\Q$, we have $k_2(\alpha)=\limsup_{k\to \infty}\delta(k)$, where 
		
		$$\delta(k):=
		\begin{cases}
			\max\left\{\frac{\alpha_{k+1}+\beta_{k+1}}{4},\frac{\alpha_{k+1}+\beta_{k+1}}{(\alpha_{k+1}-1)(1+\beta_{k+1})},\frac{\alpha_{k+1}+\beta_{k+1}}{(\alpha_{k+1}-a_{k+1}+1)(a_{k+1}-1+\beta_{k+1})}\right\}, & \text{if } a_{k+1}\geq2\\
			\frac{\alpha_{k+1}+\beta_{k+1}}{4}, &\text{if } a_{k+1}=1
		\end{cases}
		$$
	\end{lemma}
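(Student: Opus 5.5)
We derive the formula from the previous lemma, which classifies the best non-convergent approximations with denominator in a window $[q_k,q_{k+1})$. For each $k$ let $M_k:=\max\{|s(s\alpha-t)|^{-1}: t\in\Z,\ s\in\N\cap[q_k,q_{k+1}),\ (t,s)\neq(p_n,q_n)\ \forall n\}$. The maximum exists: only finitely many $s$ are involved, and for each $s$ only the nearest one or two integers $t$ matter. The windows partition all large denominators, and as $q\to\infty$ the index $k$ of the window containing $q$ tends to infinity. Hence $k_2(\alpha)=\limsup_{k\to\infty}M_k$. It therefore suffices to show that $M_k$ is always one of the candidate values listed in the previous lemma, and that $\limsup M_k=\limsup \delta(k)$.

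\textbf{Upper bound.} By the previous lemma, $M_k$ equals one of five quantities: $\kappa_4(k)$, $\kappa_4(k-1)$, $\kappa_1(k)$, $\kappa_3(k)$, or $\kappa_2(k)$. Cases (1), (2), (3) and (5) are bounded by $\delta(k)$ or $\delta(k-1)$ directly. Cases (3) and (5) only occur when $a_{k+1}\ge2$, as the proof of that lemma noted, since otherwise the denominator exceeds $q_{k+1}$. The main obstacle is case (4), the value $\kappa_3(k)=\frac{\alpha_{k+1}+\beta_{k+1}}{(\alpha_{k+1}+2)(2-\beta_{k+1})}$, which does not appear in $\delta$. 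I plan to show $\kappa_3(k)\le\kappa_4(k)$ by checking that $(\alpha_{k+1}+2)(2-\beta_{k+1})\ge4$. This holds because $\alpha_{k+1}>1$ and $\beta_{k+1}<1$, so the product is at least $3(2-\beta_{k+1})>3$. That alone is not quite enough, so I will expand: $(\alpha+2)(2-\beta)-4=2\alpha-\alpha\beta-2\beta=\alpha(2-\beta)-2\beta$. This is positive since $\alpha(2-\beta)>2-\beta>2\beta$ once $\beta<2/3$. In the remaining range $\beta\ge2/3$ I use $\alpha\ge1$ more carefully: $\alpha(2-\beta)-2\beta\ge 2-3\beta$ is not enough, so I will instead use that case (4) requires $q=2q_k-q_{k-1}<q_{k+1}$.

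If $a_{k+1}\ge2$, then $\alpha_{k+1}>2$, and $\alpha(2-\beta)-2\beta>4-4\beta>0$. If $a_{k+1}=1$, then $\alpha_{k+1}=1+1/\alpha_{k+2}$, and the constraint $2q_k-q_{k-1}<q_k+q_{k-1}$ forces $\beta_{k+1}>1/2$. In that case I would compare instead with $\kappa_4(k+1)$ or $\kappa_4(k-1)$, rewriting via $\alpha_{k+1}=1+1/\alpha_{k+2}$ and $\beta_{k+2}=1/(1+\beta_{k+1})$ in the spirit of the earlier lemma relating the $a_n=1$ expressions. This is the delicate inequality, and I expect to check it by an explicit algebraic comparison. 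The conclusion is $M_k\le\max\{\delta(k),\delta(k-1),\delta(k+1)\}$, and hence $k_2(\alpha)\le\limsup\delta(k)$.

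\textbf{Lower bound.} Each term of $\delta(k)$ is realized by an explicit pair $(p,q)$ that is not a convergent pair, with $|p|,|q|\to\infty$. The term $\kappa_4(k)$ comes from $(2p_k,2q_k)$. When $a_{k+1}\ge2$, $\kappa_1(k)$ comes from $(p_k+p_{k-1},q_k+q_{k-1})$ and $\kappa_2(k)$ from $((a_{k+1}-1)p_k+p_{k-1},(a_{k+1}-1)q_k+q_{k-1})$. The second and third are intermediate fractions strictly between $q_k$ and $q_{k+1}$, so they are not convergents. The computations in the previous proof give exactly these values of $|q(q\alpha-p)|^{-1}$. Hence $k_2(\alpha)\ge\limsup\delta(k)$, and combining the two bounds proves the lemma.
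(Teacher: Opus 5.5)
Your lower bound, the reduction $k_2(\alpha)=\limsup_k M_k$, and your treatment of case (4) when $a_{k+1}\ge 2$ (where indeed $\kappa_3(k)<\kappa_4(k)$) are fine. The gap is the subcase $a_{k+1}=1$ of case (4). The bound $M_k\le\max\{\delta(k-1),\delta(k),\delta(k+1)\}$ that you are aiming for is false, so no comparison of $\kappa_3(k)$ with $\kappa_4(k\pm1)$ can close it.

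Take $a_{k-1}=19$, $a_k=a_{k+1}=a_{k+2}=a_{k+3}=1$, $a_{k+4}=4$. Then $\beta_{k+1}=[0;1,19,\dots]\in(0.95,0.953)$ and $\alpha_{k+1}=[1;1,1,4,\dots]\in(1.545,1.556)$. Since $a_k=1$, the pair $(2p_k-p_{k-1},2q_k-q_{k-1})$ lies in the window $[q_k,q_{k+1})$ and gives $\kappa_3(k)>0.67$. By contrast:
\begin{itemize}
\item $\delta(k)=\kappa_4(k)<0.63$;
\item $\delta(k+1)=\kappa_4(k+1)=\frac14\bigl(\frac{1}{\alpha_{k+1}-1}+\frac{1}{1+\beta_{k+1}}\bigr)<0.59$;
\item $\delta(k-1)=\kappa_4(k-1)=\frac14\bigl(\frac{1}{\alpha_{k+1}}+\frac{1}{\beta_{k+1}}\bigr)<0.43$.
\end{itemize}
What dominates $\kappa_3(k)$ here is $\delta(k-2)\ge 19/4$. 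The term $\kappa_3(k)$ can only beat its neighbours when $\beta_{k+1}$ is close to $1$. That is produced by a large partial quotient $a_{k-1}$ two steps back, which your window does not see.

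The missing step can be supplied locally. Put $\alpha=\alpha_{k+1}\in(1,2)$ and $\beta=\beta_{k+1}\in(\frac12,1)$; note that $\beta>\frac12$ forces $a_k=1$. Then:
\begin{itemize}
\item $\kappa_3(k)>\kappa_4(k)$ if and only if $\alpha<\frac{2\beta}{2-\beta}$;
\item $\kappa_3(k)>\kappa_4(k-1)$ if and only if $\alpha>\frac{4-2\beta}{5\beta-2}$.
\end{itemize}
Both together force $2\beta^2+\beta-2>0$, i.e. $\beta>\frac{\sqrt{17}-1}{4}$. Writing $\beta=\frac{y}{1+y}$ with $y=[a_{k-1};a_{k-2},\dots]$, this gives $y>\frac{3+\sqrt{17}}{2}>3.5$, hence $\delta(k-2)\ge\kappa_4(k-2)>\frac{y}{4}>\frac78$. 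Meanwhile $\kappa_3(k)<\frac34$ whenever $a_{k+1}=1$, since $\kappa_3$ is increasing in $\alpha$ and $\beta$. So $M_k\le\max\{\delta(k-2),\delta(k-1),\delta(k)\}$, and the rest of your argument then goes through, giving window-by-window domination.

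The paper argues globally instead. It uses that $\kappa_3<1$ always, and $\kappa_3\le\frac23$ once the partial quotients are eventually $\le 3$. It then compares with $\limsup\kappa_4$:
\begin{itemize}
\item $\limsup\kappa_4\ge1$ if $a_k\ge4$ infinitely often;
\item $\limsup\kappa_4\ge\frac23$ if $a_k\ge2$ infinitely often but eventually $a_k\le3$.
\end{itemize}
The eventually-all-ones case is handled separately, where $\kappa_3\to\sqrt5/5<\sqrt5/4$. That route avoids the delicate neighbour analysis entirely, but it only compares limsups rather than dominating each window.
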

	
	\begin{proof}
		From now on, we fix $\alpha\in \mathbb{R}\setminus \mathbb{Q}$. We suppose that $\alpha=[a_0;a_1,a_2,\cdots]$ is such that infinitely many $a_k$ are larger or equal than $2$. If this does not happen, then $\alpha$ will be equivalent to $[0;\overline{1}]=\frac{-1+\sqrt{5}}{2}$ and clearly in that case
		\begin{equation*}
			\lim_{k\to\infty}\frac{\alpha_{k+1}+\beta_{k+1}}{4}=\frac{[0;\overline{1}]+[1;\overline{1}]}{4}=\frac{\frac{-1+\sqrt{5}}{2}+\frac{1+\sqrt{5}}{2}}{4}=\frac{\sqrt{5}}{4}.
		\end{equation*}
		
		Since $a_{k+1}\geq2$, for infinitely many positive integers $k$, if $a_{k+1}\geq 3$ for  infinitely many positive integers $k$, then $\alpha_{k+1}+\beta_{k+1}>a_{k+1}\geq3$ for infinitely many values of $k$ and 
		
		$$\underset{(p,q)\neq(p_{n},q_{n}),\forall n\in \mathbb{N}}{\underset{|p|,|q|\rightarrow \infty}{\limsup}}|q(q\alpha-p)|^{-1}\geq \limsup_{k\rightarrow \infty} \kappa_4(k)\geq \frac{3}{4}>\frac{2}{3},$$
		
		and if $a_{k+1}\leq2$ for $k$ large enough, if $a_{k+1}=2$ and $k$ is large, we have $$\alpha_{k+1}+\beta_{k+1}=2+[0;a_{k+2},a_{k+3},...]+[0;a_{k},a_{k-1},...,a_{1}]\geq 2+\frac{1}{3}+\frac{1}{3}=\frac{8}{3},$$
		
		so
		
		$$\underset{(p,q)\neq(p_{n},q_{n}),\forall n\in \mathbb{N}}{\underset{|p|,|q|\rightarrow \infty}{\limsup}}|q(q\alpha-p)|^{-1}\geq \limsup_{k\rightarrow \infty}\kappa_4(k)\geq \frac{1}{4}\cdot\frac{8}{3}=\frac{2}{3}.$$
		
		If $a_{k+1}\geq4$ for infinitely many positive integers $k$, then $\alpha_{k+1}+\beta_{k+1}>a_{k+1}\geq4$ for infinitely many values of $k$ and 
		
		$$\underset{(p,q)\neq(p_{n},q_{n}),\forall n\in \mathbb{N}}{\underset{|p|,|q|\rightarrow \infty}{\limsup}}|q(q\alpha-p)|^{-1}\geq \limsup_{k\rightarrow \infty}\kappa_4(k)\geq \frac{4}{4}=1.$$
		
		For the best approximation in case $\frac{p}{q}<\frac{p_k}{q_k}$, we have $\frac{p}{q}=\frac{2p_{k}-p_{k-1}}{2q_{k}-q_{k-1}}$ and 
		\begin{equation*}
			|q(q\alpha-p)|^{-1}=\frac{\alpha_{k+1}+\beta_{k+1}}{(\alpha_{k+1}+2)(2-\beta_{k+1})}<\frac{\alpha_{k+1}+\beta_{k+1}}{\alpha_{k+1}+2}<1,
		\end{equation*}
		so if $a_{k+1}\geq4$ for infinitely many values of $k$, we don't need to consider these approximations.
		
		If $a_{k+1}\leq3$ for every large $k$, we have $\beta_{k+1}=[0;a_{k},a_{k-1},...,a_{1}]<\frac{4}{5}$, so since $\frac{\alpha+\beta}{2-\beta}$ is increasing in $\beta$, 
		\begin{equation*}
			\frac{\alpha_{k+1}+\beta_{k+1}}{2-\beta_{k+1}}\leq \frac{\alpha_{k+1}+\frac{4}{5}}{\frac{6}{5}} \quad\text{and}\quad \frac{\alpha_{k+1}+\beta_{k+1}}{(\alpha_{k+1}+2)(2-\beta_{k+1})}\leq \frac{5}{6}\frac{\alpha_{k+1}+\frac{4}{5}}{\alpha_{k+1}+2}. 
		\end{equation*}

		Since $\alpha_{k+1}=[a_{k+1};a_{k+2},...]<4$ and $\frac{\alpha+\frac{4}{5}}{\alpha+2}$ is increasing in $\alpha$, $\frac{5}{6}\frac{\alpha_{k+1}+\frac{4}{5}}{\alpha_{k+1}+2}\leq \frac{5}{6}\frac{4+\frac{4}{5}}{6}=\frac{2}{3}$ and thus, since $\kappa_{1}(k)\geq \frac{2}{3}$ for infinitely many values of $k$,we don't need to consider the approximations from case $\frac{p}{q}<\frac{p_k}{q_k}$.
		
		This implies that if $a_{k+1}\geq2$ for infinitely many values of $k$, then $$\underset{(p,q)\neq(p_{n},q_{n}),\forall n\in \mathbb{N}}{\underset{|p|,|q|\rightarrow \infty}{\limsup}}|q(q\alpha-p)|^{-1},$$ can be written into the form of
		
		$$\limsup_{\substack{a_{k+1}\geq2 \\ k\rightarrow \infty}}\max\{\kappa_2(k),\kappa_3(k),\kappa_{4}(k)\}.$$
	\end{proof}

	\subsubsection{The case of $\mathbb{L}^{*}_{2}$}
	
	\begin{lemma}
		For $\alpha\in \mathbb{R}\setminus \mathbb{Q}$, we have $k_2^*(\alpha)=\limsup_{n\to \infty}\gamma(k)$, where
		$$\gamma(k):=
		\begin{cases}
			\max\left\{\frac{\alpha_{k+1}+\beta_{k+1}}{(2-\beta_{k+1})(\alpha_{k+1}+2)},\frac{\alpha_{k+1}+\beta_{k+1}}{(\alpha_{k+1}-1)(1+\beta_{k+1})},\frac{\alpha_{k+1}+\beta_{k+1}}{(\alpha_{k+1}-a_{k+1}+1)(a_{k+1}-1+\beta_{k+1})}\right\}, &\text{if }a_{k+1}\geq2\\
			\frac{\alpha_{k+1}+\beta_{k+1}}{(2-\beta_{k+1})(\alpha_{k+1}+2)}, & \text{if }a_{k+1}=1.
		\end{cases}
		$$
	\end{lemma}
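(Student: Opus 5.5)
We will deduce the formula from the classification lemma above. For each $k$ we consider the window $q_k\le q<q_{k+1}$ and the quantity
\begin{equation*}
M_k:=\max\{|s(s\alpha-t)|^{-1}: q_k\le s<q_{k+1},\ \tfrac{t}{s}\neq\tfrac{p_n}{q_n}\ \forall n\}.
\end{equation*}
Since every $q\ge q_0$ lies in exactly one window, and each window contains only finitely many $q$, we have
\begin{equation*}
k_2^*(\alpha)=\limsup_{k\to\infty}M_k .
\end{equation*}
So it suffices to show $\limsup_k M_k=\limsup_k\gamma(k)$.

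We run through the case analysis in the proof of the preceding lemma, but now we discard every pair with $p/q=p_n/q_n$. This removes cases (1) and (2), i.e.\ the multiples $(2p_k,2q_k)$ and $(2p_{k-1},2q_{k-1})$; in case (2) this is exactly the sub-case $b=0$. What remains in the window $[q_k,q_{k+1})$ are the following candidates.
\begin{itemize}
\item[] The fractions $\frac{bp_k-p_{k-1}}{bq_k-q_{k-1}}$ with $b\ge2$. These are optimal at $b=2$ and give $\kappa_3(k)$. The denominator $2q_k-q_{k-1}$ always lies in $(q_k,q_{k+1})$ when $a_{k+1}\ge2$, and also when $a_{k+1}=1$, since then $2q_k-q_{k-1}<q_k+q_{k-1}$ fails only if $q_k\ge 2q_{k-1}$; we treat that point separately below.
\item[] The intermediate fractions $\frac{bp_k+p_{k-1}}{bq_k+q_{k-1}}$ with $1\le b\le a_{k+1}-1$. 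These occur only when $a_{k+1}\ge2$. By concavity of $(\alpha_{k+1}-b)(b+\beta_{k+1})$ in $b$, their best value is attained at $b=1$ or $b=a_{k+1}-1$, giving $\max\{\kappa_1(k),\kappa_2(k)\}$.
\item[] The fractions $\frac{bp_k+2p_{k-1}}{bq_k+2q_{k-1}}$ with $b\ge1$. The lemma shows these are dominated by $\kappa_1(k)$.
\item[] Fractions with $|qp_k-pq_k|$ large. These give values $<\tfrac12$.
\end{itemize}
Hence
\begin{equation*}
M_k\le\max\left(\gamma(k),\tfrac12\right),
\end{equation*}
and $M_k\ge$ each available candidate in $\gamma(k)$.

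To remove the $\tfrac12$, note that $\kappa_3(k)$ is bounded below in a useful way. Write
\begin{equation*}
\kappa_3=\frac{\alpha+\beta}{(\alpha+2)(2-\beta)}.
\end{equation*}
For $\beta$ near $0$ and $\alpha$ large this tends to $\tfrac12$. More usefully, Legendre's bound together with the known infinitude and the bottom $\frac{\sqrt5}{5}>\tfrac12$ of $L_2^*$ suggests that $\limsup_k\gamma(k)\ge\frac{\sqrt5}{5}$ always. Concretely:
\begin{itemize}
\item[] If $a_{k+1}\ge2$ infinitely often, then $\kappa_1(k)>\frac{1}{1+\beta_{k+1}}>\tfrac12$ there.
\item[] Otherwise $\alpha\sim[0;\overline1]$, and then $\kappa_3\to\frac{\sqrt5}{5}>\tfrac12$.
\end{itemize}
Thus the terms $<\tfrac12$ never affect the limsup, and $\limsup M_k=\limsup\gamma(k)$.

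The main obstacle is the bookkeeping at the boundaries of the windows. When $a_{k+1}=1$, we must check that the candidate $\frac{2p_k-p_{k-1}}{2q_k-q_{k-1}}$ really lies in $[q_k,q_{k+1})$. The lemma's last paragraph asserts this ($b=2$ forced), but if $2q_k-q_{k-1}\ge q_{k+1}=q_k+q_{k-1}$ the same fraction is counted in a later window. In that case, when $a_{k+1}=1$, we have $q_{k+1}=q_k+q_{k-1}$, and we expect
\begin{equation*}
\frac{2p_k-p_{k-1}}{2q_k-q_{k-1}}=\frac{p_{k+1}+p_k-2p_{k-1}}{\cdots}
\end{equation*}
to coincide with an intermediate fraction of the next level or with the $\kappa_3(k+1)$-type fraction. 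We will handle this directly: since each fraction appears in some window and the formulas assign it the same value $|q(q\alpha-p)|^{-1}$, reindexing changes no limsup. So we may simply take the union over all $k$ of the candidate sets, and
\begin{equation*}
k_2^*(\alpha)=\limsup \text{ over that union }=\limsup_k\gamma(k),
\end{equation*}
which sidesteps the window issue entirely. One last check: $\kappa_3(k)$ arises from a fraction that is not a convergent. Its determinant with $p_k/q_k$ equals $1$, and it is distinct from $p_{k\pm1}/q_{k\pm1}$ because its denominator $2q_k-q_{k-1}$ differs from both $q_{k+1}$ and $q_{k-1}$. That second comparison needs $a_{k+1}\ne1$ or $q_k\ne 2q_{k-1}$, and this degenerate coincidence occurs only finitely often or again reindexes harmlessly.
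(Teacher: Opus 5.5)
\textbf{Verdict: there is a genuine gap.} The step that discards the $\frac12$ fails in the golden-ratio case.

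Your overall route is the paper's: window decomposition plus the classification lemma. Your handling of the window boundary via the union of candidates is also sound. Since $(\alpha_{k+1}+b)(b-\beta_{k+1})$ is increasing in $b$, every far-side determinant-one fraction in window $k$ is bounded by $\kappa_3(k)$, whether or not $b=2$ itself lies in that window. Each term of $\gamma(k)$ is realized by an explicit non-convergent pair, so the lower bound holds.

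The failure is this. You assert $\frac{\sqrt5}{5}>\frac12$, but $\frac{\sqrt5}{5}=\frac{1}{\sqrt5}\approx 0.447<\frac12$. When $a_k=1$ for all large $k$ you have $\limsup_k\gamma(k)=\frac{\sqrt5}{5}$, so your bound $M_k\le\max(\gamma(k),\frac12)$ only yields $k_2^*(\alpha)\le\frac12$. Nothing in your argument prevents the fractions with large $|qp_k-pq_k|$, which you only bound by $\frac12$, from pushing the limsup above $\frac{\sqrt5}{5}$.

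The paper treats this case separately. The missing ingredient is a sharper estimate for those fractions, using $\alpha_{k+1}+\beta_{k+1}\to\sqrt5$ and $q_{k+1}/q_k\to\frac{1+\sqrt5}{2}$.

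On the far side of $p_k/q_k$ with $|qp_k-pq_k|\ge2$, the points $\alpha$ and $p/q$ lie on opposite sides of $p_k/q_k$. Hence $q|q\alpha-p|\ge \frac{2q}{q_k}+\frac{q^2}{(\alpha_{k+1}+\beta_{k+1})q_k^2}\ge 2+\frac{1}{\alpha_{k+1}+\beta_{k+1}}\to 2+\frac{1}{\sqrt5}>\sqrt5$.

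On the near side with $|qp_k-pq_k|\ge3$, the triangle inequality gives $q|q\alpha-p|\ge x\bigl(3-\frac{x}{\alpha_{k+1}+\beta_{k+1}}\bigr)$, where $x=q/q_k\in[1,q_{k+1}/q_k)$. This function is concave in $x$, and its minimum over the interval tends to $3-\frac{1}{\sqrt5}>\sqrt5$.

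When $a_{k+1}=1$, the near-side fractions of determinant $1$ or $2$ are either outside the window or the excluded pair $(2p_{k-1},2q_{k-1})$. So these two bounds show that all remaining candidates are asymptotically below $\frac{\sqrt5}{5}$, which closes the case.

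A smaller point concerns showing that $\frac{2p_k-p_{k-1}}{2q_k-q_{k-1}}$ is not a convergent. Comparing it only with $p_{k\pm1}/q_{k\pm1}$ is not enough, since $p_{k+2}/q_{k+2}$ is also a Farey neighbour of $p_k/q_k$ whenever $a_{k+2}=1$. The clean reason is that this fraction lies on the side of $p_k/q_k$ opposite to $\alpha$, where every convergent has denominator smaller than $q_k$.
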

	
	\begin{proof}
		The previous discussion implies that, if $a_{k+1}\geq2$ for infinitely many values of $k$, then $$\underset{\frac{p}{q}\neq\frac{p_n}{q_n},\forall n\in \mathbb{N}}{\underset{|p|,|q|\rightarrow \infty}{\limsup}}|q(q\alpha-p)|^{-1}=\underset{k\rightarrow \infty}{\limsup}\hspace{0.1cm}\gamma(k).$$

		Now we consider the case $a_{k}=1$ for every large $k$.
		
		The approximations with $\frac{p}{q}<\frac{p_k}{q_k}$ give $|q(q\alpha-p)|^{-1}=\frac{\alpha_{k+1}+\beta_{k+1}}{(\alpha_{k+1}+2)(2-\beta_{k+1})}$. Since, for $k$ large, $\alpha_{k+1}=[1;1,1,...]=\frac{\sqrt{5}+1}{2}$, we have $\lim_{k\rightarrow\infty}\frac{\alpha_{k+1}+\beta_{k+1}}{(\alpha_{k+1}+2)(2-\beta_{k+1})}=\frac{\sqrt{5}}{5}$.
		
		In case $\frac{p}{q}<\frac{p_k}{q_k}$, if $|qp_{k}-pq_{k}|\geq2$, we would have $|\alpha-\frac{p}{q}|=|\frac{p_{k}}{q_{k}}-\frac{p}{q}|+|\alpha-\frac{p_{k}}{q_{k}}|\geq\frac{2}{qq_{k}}+|\alpha-\frac{p_{k}}{q_{k}}|$. Since we have $|\alpha-\frac{p_{k}}{q_{k}}|=\frac{1}{(\alpha_{k+1}+\beta_{k+1})q_{k}^{2}}$ and $\lim_{k\rightarrow\infty}(\alpha_{k+1}+\beta_{k+1})=\sqrt{5}<\frac{12}{5}$,we have $|\alpha-\frac{p_{k}}{q_{k}}|>\frac{5}{12q_{k}^{2}}$ for $k$ large, so 
		
		\begin{equation*}
			\begin{split}
				\abs{q(q\alpha-p)}=\abs{q^{2}(\alpha-\frac{p}{q})}\geq\frac{2q}{q_{k}}+\frac{5q^{2}}{12q_{k}^{2}}\geq2+\frac{5}{12}=\frac{29}{12}
			\end{split}
		\end{equation*}
		
		which means that $|q(q\alpha-p)|^{-1}\leq\frac{12}{29}<\frac{\sqrt{5}}{5}$.
		
		In case $\frac{p}{q}>\frac{p_k}{q_k}$, if $|qp_{k}-pq_{k}|\geq3$, we would have $|\alpha-\frac{p}{q}|=|\frac{p}{q}-\frac{p_{k}}{q_{k}}|-|\alpha-\frac{p_{k}}{q_{k}}|\geq \frac{3}{qq_{k}}-|\alpha-\frac{p_{k}}{q_{k}}|$. Again, since $|\alpha-\frac{p_{k}}{q_{k}}|=\frac{1}{(\alpha_{k+1}+\beta_{k+1})q_{k}^{2}}$ and $\lim_{k\rightarrow\infty}(\alpha_{k+1}+\beta_{k+1})=\sqrt{5}$, since $\frac{q}{q_{k}}<\frac{q_{k+1}}{q_{k}}$, $\lim_{k\rightarrow\infty}\frac{q_{k+1}}{q_{k}}=\frac{1+\sqrt{5}}{2}$ and
		\begin{equation*}
			\abs{q\alpha-p}=q\abs{\alpha-\frac{p}{q}}\geq\frac{3}{q_{k}}-\frac{q}{q_{k}}\abs{q_{k}(\alpha-p_{k})}\geq\frac{3}{q_{k}}-\frac{1.62}{2.23q_{k}}>\frac{\sqrt{5}}{q_{k}}\geq\frac{\sqrt{5}}{q},
		\end{equation*}
		we have $q|q\alpha-p|>\sqrt{5}$ and $|q(q\alpha-p)|^{-1}<\frac{\sqrt{5}}{5}$.
		
		This and the previous discussion implies that if $a_{k}=1$ for every large $k$ then
		
		$$\underset{|p|,|q|\rightarrow\infty,(p,q)\neq(p_{n},q_{n}),\forall n\in \mathbb{N}}{\limsup}|q(q\alpha-p)|^{-1}=\max\left\{\frac{\sqrt{5}}{5},\lim_{k\rightarrow\infty}\kappa_4(k)\right\}=\frac{\sqrt{5}}{4}$$
		
		and $\underset{\frac{p}{q}\neq\frac{p_n}{q_n},\forall n\in \mathbb{N}}{\underset{|p|,|q|\rightarrow \infty}{\limsup}}|q(q\alpha-p)|^{-1}=\lim_{k\rightarrow\infty}\frac{\alpha_{k+1}+\beta_{k+1}}{(\alpha_{k+1}+2)(2-\beta_{k+1})}=\frac{\sqrt{5}}{2}$.
		
		This implies, in general, the formula
		
		$$\underset{\frac{p}{q}\neq\frac{p_{n}}{q_{n}},\forall n\in \mathbb{N}}{\underset{|p|,|q|\rightarrow \infty}{\limsup}}|q(q\alpha-p)|^{-1}=\underset{k\rightarrow\infty}{\limsup}\hspace{0.1cm}\gamma(k).$$
		
	\end{proof}
	
	\subsection{Dynamical characterization of $L_2$}

	Recall that Moshchevitin proved that, if $a_{k+1}\geq2$ for infinitely many values of $k$, then $$k_2(\alpha)=\underset{(p,q)\neq(p_{n},q_{n}),\forall n\in \mathbb{N}}{\underset{|p|,|q|\rightarrow \infty}{\limsup}}|q(q\alpha-p)|^{-1}$$ can be written into the form of $k_2(\alpha)=\limsup_{k\to \infty}\delta(k)$, where 
	
	$$\delta(k):=
	\begin{cases}
		\max\left\{\frac{\alpha_{k+1}+\beta_{k+1}}{4},\frac{\alpha_{k+1}+\beta_{k+1}}{(\alpha_{k+1}-1)(1+\beta_{k+1})},\frac{\alpha_{k+1}+\beta_{k+1}}{(\alpha_{k+1}-a_{k+1}+1)(a_{k+1}-1+\beta_{k+1})}\right\}, & \text{if } a_{k+1}\geq2\\
		\frac{\alpha_{k+1}+\beta_{k+1}}{4}, &\text{if } a_{k+1}=1
	\end{cases}
	$$

	\begin{lemma}
		If $k_2(\alpha)\geq2$, then
		\begin{equation}\label{eq:k_2(alpha)greaterthan2}
			k_2(\alpha)=\underset{(p,q)\neq(p_{n},q_{n}),\forall n\in \mathbb{N}}{\underset{|p|,|q|\rightarrow \infty}{\limsup}}|q(q\alpha-p)|^{-1}=\underset{k\rightarrow\infty}{\limsup}\hspace{0.1cm}\frac{\alpha_{k+1}+\beta_{k+1}}{4}
		\end{equation}    
	\end{lemma}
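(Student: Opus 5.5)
Using Moshchevitin's formula, $k_2(\alpha)=\limsup_k \delta(k)$, where $\delta(k)$ is the maximum of $\kappa_4(k)$ and, when $a_{k+1}\ge 2$, also of $\kappa_1(k)$ and $\kappa_2(k)$. So it suffices to show: if $k_2(\alpha)\ge 2$, then the terms $\kappa_1(k),\kappa_2(k)$ do not change the limsup. The plan is to find a uniform bound $\kappa_1(k),\kappa_2(k)\le C$ with $C<2$. Since $\limsup_k\delta(k)=\max\{\limsup_k\kappa_4(k),\limsup_k\kappa_1(k),\limsup_k\kappa_2(k)\}$ (the last two taken over the $k$ with $a_{k+1}\ge 2$), such a bound forces $\limsup\kappa_4(k)=k_2(\alpha)\ge 2$, which is \eqref{eq:k_2(alpha)greaterthan2}.

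For $\kappa_1$, write $\alpha=\alpha_{k+1}\ge 2$ and $\beta=\beta_{k+1}\in(0,1)$, and set
\[
\kappa_1=\frac{\alpha+\beta}{(\alpha-1)(1+\beta)}.
\]
The numerator minus the denominator, $(\alpha-1)(1+\beta)-(\alpha+\beta)=\alpha\beta-2\beta-1$, is not of fixed sign, so I will bound $\kappa_1$ directly. First, $\kappa_1$ is decreasing in $\alpha$, since $\frac{\alpha+\beta}{\alpha-1}=1+\frac{1+\beta}{\alpha-1}$. Hence
\[
\kappa_1\le \frac{1}{1+\beta}+\frac{1}{\alpha-1}.
\]
The delicate regime is $\alpha$ close to $2$, i.e.\ $a_{k+1}=2$ with $a_{k+2}$ large, together with $\beta$ small, i.e.\ $a_k$ large. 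There $\kappa_1$ approaches $\frac{2+\beta}{1+\beta}$, which tends to $2$ as $\beta\to 0$. So the supremum of $\kappa_1$ equals $2$ but is not attained, and a uniform bound below $2$ fails. The argument therefore has to use that $\kappa_4$ is large in exactly those configurations.

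The key step is this comparison. When $\kappa_1(k)$ is close to $2$, $\beta_{k+1}$ is close to $0$, so $a_k$ is large. Then $\kappa_4(k-1)=(\alpha_k+\beta_k)/4\ge a_k/4$ is large. More precisely, I would show
\[
\kappa_1(k)\le \max\{\kappa_4(k-1),\kappa_4(k+1),\,2-\eta\}
\]
for some fixed $\eta>0$. When $\beta<1/7$ we have $a_k\ge 7$, so $\kappa_4(k-1)>7/4$. Using $\kappa_1\le\frac{2+\beta}{1+\beta}$ together with $\beta\approx 1/a_k$, I get $\kappa_1<2$ while $\kappa_4(k-1)\ge a_k/4$. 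If $a_k\ge 8$, then $\kappa_4(k-1)\ge 2>\kappa_1(k)$. If $a_k\le 7$, then $\beta\ge 1/8$ and $\kappa_1\le 17/9<2$. This gives $\kappa_1\le\max\{\kappa_4(k-1),17/9\}$.

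For $\kappa_2$, I substitute $\alpha'=\alpha-a_{k+1}+1\in(1,2)$ and $\beta'=a_{k+1}-1+\beta$. Then
\[
\kappa_2=\frac{\alpha'+\beta'}{\alpha'\beta'}=\frac{1}{\alpha'}+\frac{1}{\beta'}.
\]
If $a_{k+1}\ge 3$, then $\beta'\ge 2$ and $\alpha'>1$, so $\kappa_2<3/2$. If $a_{k+1}=2$, then $\kappa_2=\frac{1}{1+x}+\frac{1}{1+\beta}$ with $x=[0;a_{k+2},\dots]$. This is close to $2$ only when both $x$ and $\beta$ are small, i.e.\ $a_{k+2}$ or $a_k$ is large, and the same argument as for $\kappa_1$ applies (using $a_k\le 7$, $\beta\ge 1/8$ in the same way). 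The result is $\kappa_2\le \max\{\kappa_4(k-1),\kappa_4(k+1),C\}$ with $C<2$.

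Taking limsups, $k_2(\alpha)\le\max\{\limsup\kappa_4,\,C\}$. Since $k_2(\alpha)\ge 2>C$, this gives $k_2(\alpha)=\limsup\kappa_4$. The main obstacle is the borderline regime where $\kappa_1$ or $\kappa_2$ approaches $2$. There I need the explicit threshold on $a_k$ (or $a_{k+2}$) showing that the neighbouring $\kappa_4$ already dominates; I expect the numerical constants to require some care.
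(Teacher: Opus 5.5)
Your proof is correct, and it is more complete than the paper's. The paper's proof consists only of the pointwise bounds $\kappa_1(k)\le\frac{2+\beta_{k+1}}{1+\beta_{k+1}}<2$ and $\kappa_2(k)\le\frac{a_{k+1}+\beta_{k+1}}{a_{k+1}-1+\beta_{k+1}}<2$. These come from monotonicity in $\alpha_{k+1}$, exactly as in your first step.

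Those bounds give $\limsup\kappa_1,\limsup\kappa_2\le 2$, which settles the case $k_2(\alpha)>2$ at once. As you correctly observe, they do not by themselves rule out $\limsup\kappa_1=2>\limsup\kappa_4$ when $k_2(\alpha)=2$, and the paper leaves that boundary case unaddressed.

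Your comparison with the neighbouring term closes this gap:
$$\kappa_1(k),\kappa_2(k)\le\max\{\kappa_4(k-1),17/9\},$$
obtained by splitting on $a_k\ge 8$ (where $\kappa_4(k-1)>2$) versus $a_k\le 7$ (where $\beta_{k+1}>1/8$). It in fact proves more: $k_2(\alpha)=\limsup_k\kappa_4(k)$ whenever $k_2(\alpha)>17/9$. This is essentially the device the paper only introduces in the next lemma, with the weaker threshold $21/11$, to obtain $[\frac{21}{11},\infty)\subset L_2$. Your route folds the two steps into one.

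Minor points:
\begin{itemize}
\item The term $\kappa_4(k+1)$ is never needed.
\item In the $\kappa_2$ paragraph, ``$a_{k+2}$ or $a_k$ is large'' should read ``and''. For $a_{k+1}=2$ one simply has $\kappa_2(k)=\kappa_1(k)$.
\item $\kappa_1=\frac{1}{1+\beta}+\frac{1}{\alpha-1}$ is an identity, not a consequence of monotonicity.
\end{itemize}
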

	
	\begin{proof}
		Notice that, since $\frac{\alpha+\beta}{\alpha-1}$ is decreasing in $\alpha$, and $\alpha_{k+1}\geq2$ if $a_{k+1}\geq2$, then $\frac{\alpha_{k+1}+\beta_{k+1}}{\alpha_{k+1}-1}\leq2+\beta_{k+1}$, and thus $\frac{\alpha_{k+1}+\beta_{k+1}}{(\alpha_{k+1}-1)(1+\beta_{k+1})}\leq\frac{2+\beta_{k+1}}{1+\beta_{k+1}}<2$. Also, since $\frac{\alpha+\beta}{\alpha-a_{k+1}}$ is decreasing in $\alpha$, and $\alpha_{k+1}\geq a_{k+1}$, we have
		$$\frac{\alpha_{k+1}+\beta_{k+1}}{\alpha_{k+1}-a_{k+1}+1}\leq a_{k+1}+\beta_{k+1}$$
		therefore 
		$$\frac{\alpha_{k+1}+\beta_{k+1}}{(\alpha_{k+1}-a_{k+1}+1)(a_{k+1}-1+\beta_{k+1})}\leq \frac{a_{k+1}+\beta_{k+1}}{a_{k+1}+\beta_{k+1}-1}<2,$$
		since $a_{k+1}+\beta_{k+1}\geq a_{k+1}\geq2$.
	\end{proof}

	\begin{lemma}
		\begin{equation*}
			\left[\frac{21}{11},\infty\right) \subset L_2.
		\end{equation*}
	\end{lemma}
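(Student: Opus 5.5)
The plan is to reduce the statement to the classical Lagrange spectrum. By the preceding lemma (equation \eqref{eq:k_2(alpha)greaterthan2}), whenever $k_2(\alpha)\geq 2$ we have $k_2(\alpha)=\frac14\limsup_{k\to\infty}(\alpha_{k+1}+\beta_{k+1})=\frac{k(\alpha)}{4}$. This is because $\alpha_{k+1}+\beta_{k+1}$ is exactly the quantity whose $\limsup$ gives the classical constant $k(\alpha)$. So for every $\alpha$ with $k(\alpha)\geq 8$ I expect $k_2(\alpha)=k(\alpha)/4$.

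To use this I first need the lemma's hypothesis $k_2(\alpha)\ge 2$ to follow from $k(\alpha)\geq 8$. This holds directly: from Moshchevitin's formula (or the reduction lemma), $\delta(k)\geq \kappa_4(k)=\frac{\alpha_{k+1}+\beta_{k+1}}{4}$ for every $k$. Hence $k_2(\alpha)\geq k(\alpha)/4\geq 2$, the lemma applies, and equality holds. Consequently
\begin{equation*}
\tfrac14\bigl(L\cap[8,\infty)\bigr)\subset L_2 .
\end{equation*}

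Freiman's theorem, cited in the introduction, identifies the largest half-line in $L$ as $[c_F,\infty)$ with $c_F\approx 4.5278$, so in particular $[8,\infty)\subset L$. This gives $[2,\infty)\subset L_2$.

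It remains to cover $[\frac{21}{11},2)$, and this is the main obstacle. Here one must show $k_2(\alpha)=t$ for each $t\in[\frac{21}{11},2)$, which in the $\kappa_4$ regime means $k(\alpha)=4t\in[\frac{84}{11},8)\subset[c_F,\infty)\subset L$. So such $\alpha$ exist in the classical spectrum; the difficulty is controlling $\kappa_1,\kappa_2$. From the bounds in the preceding proof, $\kappa_1<\frac{2+\beta_{k+1}}{1+\beta_{k+1}}$ and $\kappa_2<\frac{a_{k+1}+\beta_{k+1}}{a_{k+1}+\beta_{k+1}-1}$. These are below $\frac{21}{11}$ as soon as $\beta_{k+1}>\frac1{10}$ and $a_{k+1}+\beta_{k+1}\geq \frac{21}{10}$.

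I would therefore restrict to $\alpha$ whose partial quotients are bounded by $9$ (so $\beta_{k+1}>\frac1{10}$) and which contain no digit pattern making $a_{k+1}=2$ with tiny $\beta_{k+1}$. Inside this restricted class one needs classical Lagrange values filling $[\frac{84}{11},8)$. For this I would use the Hall-type result that $C(4)+C(4)$ contains an interval, together with the standard construction of sequences $\alpha$ with $\limsup(\alpha_{k+1}+\beta_{k+1})$ prescribed, keeping the largest digit near $7$. The constant $\frac{21}{11}$ presumably arises exactly as the threshold where $\kappa_1$ and $\kappa_2$ stay dominated by $\kappa_4$ for such digit sets. The delicate part will be checking this domination uniformly along the constructed sequences.
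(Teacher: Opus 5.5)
Your argument for $[2,\infty)\subset L_2$ is correct and is the same as the paper's. The gap is in $[\frac{21}{11},2)$. You call it ``the main obstacle'' and only outline a construction you do not carry out: restrict to bounded digits, rebuild Lagrange values in $[\frac{84}{11},8)$ via Hall's theorem on $C(4)+C(4)$, and postpone the ``delicate'' check that $\kappa_1,\kappa_2$ stay dominated. As written, this half of the statement is not proved.

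The missing observation is that the digit restriction you want is automatic, so no construction is needed. Since $\alpha_{k+1}+\beta_{k+1}>a_{k+1}$, any $\alpha$ with $k(\alpha)<8$ has $a_{k+1}\le 7$ for all large $k$. Hence $\beta_{k+1}=[0;a_k,\dots,a_1]\ge\frac18>\frac1{10}$ for all large $k$. Your own bounds $\kappa_1<1+\frac{1}{1+\beta_{k+1}}$ and $\kappa_2<1+\frac{1}{a_{k+1}+\beta_{k+1}-1}$ then give $\max(\kappa_1(k),\kappa_2(k))<\frac{21}{11}$ (indeed $\le\frac{17}{9}$) for all large $k$ with $a_{k+1}\ge 2$. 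Moshchevitin's formula therefore yields $k_2(\alpha)=\max\{k(\alpha)/4,\ \limsup\max(\kappa_1,\kappa_2)\}=k(\alpha)/4$ whenever $k(\alpha)\ge\frac{84}{11}$.

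Freiman's ray already gives $[\frac{84}{11},8)\subset L$. So every $4t$ with $t\in[\frac{21}{11},2)$ equals $k(\alpha)$ for some $\alpha$, and that $\alpha$ eventually lies in your ``restricted class'' for free. This gives $[\frac{21}{11},2)\subset L_2$ directly, which is the paper's argument. Your extra condition excluding ``$a_{k+1}=2$ with tiny $\beta_{k+1}$'' is redundant, since bounded digits already force $\beta_{k+1}>\frac1{10}$. The constant $\frac{21}{11}$ is simply $\frac{2+1/10}{1+1/10}$; it does not come from any Hall-type construction.
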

	\begin{proof}
		Since the classical Lagrange spectrum satisfies
		
		$$L=\left\{\underset{k\rightarrow\infty}{\limsup}\hspace{0.1cm}(\alpha_{k+1}+\beta_{k+1})\right\}\supset[8,+\infty),$$
		
		we have
		$$\left\{\underset{k\rightarrow\infty}{\limsup}\left(\frac{\alpha_{k+1}+\beta_{k+1}}{4}\right)\right\}\supset[2,+\infty).$$

		Together with \eqref{eq:k_2(alpha)greaterthan2}, we then have that $[2,+\infty)\subset L_2$, and if $k_2(\alpha)\leq 2$, then we will have that $a_{k+1}\leq 7$ for every large enough $k$. Now, as $\beta_{k+1}=[0;a_k,\cdots,a_1]$, if $\beta_{k+1}\leq \frac{1}{10}$, then $a_k\geq 10$. Therefore, we actually have that if $a_n\leq 7$ for every $n$ and $a_{k+1}\geq 2$, $\frac{2+\beta_{k+1}}{1+\beta_{k+1}}<\frac{21}{11}<2$ and $\frac{a_{k+1}+\beta_{k+1}}{\alpha_{k+1}+\beta_{k+1}-1}<\frac{21}{11}$. By the same argument, this implies that if $k_2(\alpha)\geq \frac{21}{11}$, then $a_k\leq 4$ so $\frac{\alpha_{k}+\beta_k}{4}\leq \frac{84}{11}<8$ for every large enough $k$, and since $L\supset [\frac{84}{11},+\infty)$ we will also have $L_2\supset [\frac{21}{11},+\infty)$.
		
	\end{proof}
	
	This allows us to reduce the study of $L_{2}$ to the study of a dynamical Lagrange Spectrum associated to a horseshoe.
	
	Let 
	$$C(7)=\{[0;a_{1},a_{2},a_{3},...]\in[0,1]:a_{j}\leq7,\forall j\geq1\}$$ 
	
	and 
	
	$$T:C(7)\times C(7)\rightarrow C(7)\times C(7)$$
	
	given by 
	
	$$T(x,y)=\left(\frac{1}{x}-\left\lfloor\frac{1}{x}\right\rfloor,\frac{1}{y+[\frac{1}{x}]}\right)=\left(\frac{1}{x}-\left\lfloor\frac{1}{x}\right\rfloor,\frac{1}{y+\left\lfloor\frac{1}{x}\right\rfloor}\right)$$
	
	so
	
	$$T([0;a_{0},a_{1},a_{2},...],[0;b_{1},b_{2},...])=([0;a_{1},a_{2},a_{3},...],[0;a_{0},b_{1},b_{2},...]).$$
	
	The map $T$ extends with the same expression to a neighborhood of $\Delta=C(7)\times C(7)$,and $\Delta$ is a horseshoe for this map.
	
	Let $l:\Delta\rightarrow \mathbb{R}$ be given by 
	
	\begin{equation*}
		l(x,y)=
		\begin{cases}
			\max\left\{\frac{y+\frac{1}{x}}{4},\frac{y+\frac{1}{x}}{(\frac{1}{x}-1)(1+y)},\frac{y+\frac{1}{x}}{(\frac{1}{x}-[\frac{1}{x}]+1)([\frac{1}{x}]+y-1)}\right\}, &\text{if } x<\frac{1}{2}, \\
			\frac{y+\frac{1}{x}}{4}, &\text{if } x>\frac{1}{2}.
		\end{cases}
	\end{equation*}
	
	Notice that $x=[0;a_{0},a_{1},...]<\frac{1}{2}$ if and only if $a_{0}\geq2$. Then we have $$L_{2}\cap [0,2]=L_{T,l,\Delta}\cap[0,2],$$
	where $L_{T,l,\Delta}$ denotes the dynamical spectrum associated with the data $(T,l,\Delta)$.

	\subsection{Continuity of dimension for $L_2$}
	
	We will now check that that the data $(T,l,\Delta)$ satisfies the generic hypotheses of the version of the theorem of continuity of dimension for maxima of smooth functions. According to what was mentioned before, the horseshoe $\Delta$ will be in the set $\tilde{U}$ of \Cref{main} because of the fact that the Gauss-Cantor sets are non-essentially affine. Regarding $l$, we see that $l$ can be written in a neighborhood of $\Delta$ as $l(x,y)=\max\{l_1(x,y),l_2(x,y),l_3(x,y)\}$, where $l_1,l_2,l_3$ are in $\tilde{R}_{T,\Delta}$. Set
	
	\begin{eqnarray*}
		l_1(x,y)&=& \frac{y+\frac{1}{x}}{4}\\
		l_2(x,y)&=&\begin{cases}
			\frac{y+\frac{1}{x}}{(\frac{1}{x}-1)(1+y)}, & \text{if } x <\frac{1}{2}\\
			-y-x, \text{if } x>\frac{1}{2}
			
		\end{cases} \\
		l_3(x,y)&=& \begin{cases}
			\frac{y+\frac{1}{x}}{(\frac{1}{x}-[\frac{1}{x}]+1)([\frac{1}{x}]+y-1)}, & \text{ if } x<\frac{1}{2}\\
			-y-x, \text{ if } x > \frac{1}{2}
		\end{cases}
	\end{eqnarray*}
	
	Then we have that $l(x,y)=\max\{l_1(x,y),l_2(x,y),l_3(x,y)\}$, and $l_1,l_2,l_3$ are smooth in $\{(x,y)\in (0,1)\times (0,1)|x\neq \frac{1}{2}\}$. The expression $-y-x$ is just a formality to include this function on the hypotheses of the theorem, but it will never contribute to the value of $l$ since it takes negative values. As $\frac{\partial (-y-x)}{\partial x}=\frac{\partial (-y-x)}{\partial y}=-1$, this function satisfies the generic hypotheses of the theorem.
	
	Notice that for each of the functions $$\frac{y+\frac{1}{x}}{4},\quad \frac{y+\frac{1}{x}}{(\frac{1}{x}-1)(1+y)},\quad \frac{y+\frac{1}{x}}{(\frac{1}{x}-[\frac{1}{x}]+1)([\frac{1}{x}]+y-1)},$$ if we fix $y\in C(7)$, each of them is a nonlinear Möbius map on $x$ without poles a neighborhood of $C(7)$, so then the partial derivative of each of them with respect to $x$ is always nonzero in $C(7)\times C(7)$. Similarly, fixing $x\in C(7)$, we have that $\frac{\partial }{\partial y}(\frac{y+\frac{1}{x}}{4})=\frac{1}{4}$ for every $y$, and the other two functions that define $l$ are nonlinear Möbius maps of $y$ without poles in a neighborhood of $\Delta$. This implies that this setting is in the conditions of \Cref{cor:continuity}, and therefore the function $HD(L_2\cap (-\infty,t))$ is continuous for $t< 2$. On the other hand, as $[\frac{21}{11},+\infty)\subset L_2$, then $HD(L_2\cap (-\infty,t))=1$ for $t\geq \frac{21}{11}$, which assures that this function is also continuous for $t\geq 2$. Finally, as the function $d_2$ is continuous and takes the values 0 and 1 (because of Hall's ray), it must be surjective. This proves \Cref{corE}.

	\subsection{Discontinuity of the dimension function $d_2^*$ in the point $\frac{2}{3}$}
	
	As we have seen, the formula for $k_2^*$ given by Moshchevitin for $k_2^*$ gives a dynamical characterization of $L_2^*$ as the dynamical Lagrange spectrum associated to the function $f^*:\Sigma\to \mathbb{R}$, $$f^*((a_n)_{n\in \mathbb{Z}})=\begin{cases}
		\frac{\alpha_{0}+\beta_{0}^*}{(2\alpha_{0}-1)(2\beta_{0}^*+1)}, & \text{if } a_{0}=1 \\
		\max\left\{\frac{\alpha_{0}+\beta_{0}^*}{(\alpha_{0}-1)(1+\beta_{{0}}^*)},\frac{\alpha_{0}+\beta_{0}^*}{(\alpha_{0}-a_{0}+1)(a_{0}-1+\beta_{0}^*)}\right\}, & \text{if } a_{0}\geq 2 
	\end{cases}$$ on the shift $\sigma:\Sigma\to \Sigma$.

	As was observed by Moshchevitin, the numbers of the form $x=[0;1,1,x_3,1,1,x_6,\cdots]$, with $x_{3n}$ going to infinity are such that $k_2^*(x)=\frac{2}{3}$. 
	With this in mind, we will prove that for every $\varepsilon>0$, there is $k_0$ such that for any $\underline{\theta}=(\cdots,x_{-3},1,1;x_0,1,1,x_3,1,1,x_6,\cdots)$, with $x_{3n}\geq k_0$ for $n\in \mathbb{Z}$, $\ell_{\sigma,f^*}(\underline{\theta})\leq \frac{2}{3}+\varepsilon$. In the opposite direction of what was done in section 2, for $\underline{\theta}=(\cdots,1,1,x_{-3},1,1;x_{0},1,1,x_3,\cdots)$, we have that $$\limsup_{n\to +\infty}f^*(\sigma^n(\underline{\theta}))=\limsup_{k\to +\infty}\max_{0\leq j\leq 2}f^*(\sigma^j(\sigma^{3k}(\underline{\theta})))=\limsup_{k\to +\infty}\hat{f}(\sigma^{3k}(\underline{\theta})),$$ for $\hat{f}=\max_{0\leq j\leq 2}f^*\circ \sigma^j$. For $\underline{\theta}=((a_n)_{n\in \mathbb{Z}})$ of this form, we easily see then that 
	\begin{multline*}
		\hat{f}((a_n)_{n\in \mathbb{Z}})=\max\Bigg\{\frac{\alpha_0+\beta_0^*}{(\alpha_0-1)(\beta_0^*+1)},\frac{\alpha_0+\beta_0^*}{(\alpha_0-a_0+1)(a_0-1+\beta_0^*)}, \\
		\frac{\alpha_1+\beta_1^*}{(2\alpha_1-1)(2\beta_1^*+1)},\frac{\alpha_2+\beta_2^*}{(2\alpha_2-1)(2\beta_2^*+1)}\Bigg\}
	\end{multline*}
	where the expressions for 
	\begin{equation*}
		\alpha_1=[1;1,x_3,\cdots], \beta_1^*=[0;x_0,1,1,\cdots], \alpha_2=[1;x_3,1,1,\cdots],\beta_2^*=[0;1,x_0,1,1,\cdots],
	\end{equation*}
	are the corresponding values of $\alpha_0,\beta_0^*$ but for $\sigma(\theta) \text{ and } \sigma^2(\theta)$, respectively. To see that $\hat{f}(\underline{\theta})$ stays close to $\frac{2}{3}$, we first state some useful inequalities, which hold if $k_0$ is large enough:
	
	\begin{itemize}
		\item As $x_0\leq \alpha_0\leq x_0+1$ and $\frac{x_{-3}+2}{2x_{-3}+3}\leq \beta_0^* \leq \frac{x_{-3}+1}{2x_{-3}+1}$, we have that 
		\begin{align*}
			\frac{1}{x_0+1}+\frac{2}{3}-\frac{1}{9x_{-3}+6}&=\frac{1}{x_0+1}+\frac{2x_{-3}+1}{3x_{-3}+2} \\
			&\leq \frac{\alpha_0+\beta_0^*}{(\alpha_0-1)(\beta_0^*+1)} \\
			&=\frac{1}{\alpha_0-1}+\frac{1}{\beta_0^*+1} \\
			&\leq \frac{1}{x_0}+\frac{2x_{-3}+3}{3x_{-3}+5}=\frac{1}{x_0}+\frac{2}{3}-\frac{1}{9x_{-3}+15}
		\end{align*}
		Therefore, we have that $\frac{\alpha_0+\beta_0^*}{(\alpha_0-1)(\beta_0^*+1)}=\frac{1}{x_0}+\frac{2}{3}-\frac{1}{9x_{-3}}+\eta$, with $|\eta|<\frac{1}{9x_{-3}^2}+\frac{1}{x_0^2}$. We also have the simple upper bound $\frac{\alpha_0+\beta_0^*}{(\alpha_0-1)(\beta_0^*+1)}<\frac{2}{3}+\frac{1}{x_0}$.
		
		\item Considering the symmetry of $(\alpha_0,\beta_0^*)$ and $(\beta_0^*+a_0,\alpha_0-a_0)$, the same argument in the previous item shows that $\frac{\alpha_0+\beta_0^*}{(\alpha_0-a_0+1)(a_0-1+\beta_0^*)}=\frac{1}{x_0}+\frac{2}{3}-\frac{1}{9x_3}+\eta', |\eta'|<\frac{1}{9x_3^2}+\frac{1}{x_0^2}$, and also $\frac{\alpha_0+\beta_0^*}{(\alpha_0-a_0+1)(a_0-1+\beta_0^*)}<\frac{2}{3}+\frac{1}{x_0}$ 
		
		\item As $\frac{\alpha+\beta}{(2\alpha-1)(2\beta+1)}=\frac{1}{2}\left(\frac{1}{2\alpha-1}+\frac{1}{2\beta+1} \right)$ and as $\frac{2x_3+1}{x_3+1}\leq\alpha_1=[1;1,x_3,\cdots]\leq \frac{2x_3+3}{x_3+2},\frac{1}{x_0+1}\leq\beta_1^*=[0;x_0,1,1,\cdots]<\frac{1}{x_0},\frac{x_3+2}{x_3+1}\leq \alpha_2=[1;x_3,1,1,\cdots]<\frac{x_3+1}{x_3},\frac{x_0+1}{x_0+2}\leq \beta_2^*=[0;1,x_0,1,1,\cdots]\leq \frac{x_0}{x_0+1}$, we have that 
		\begin{align*}
			\frac{\alpha_1+\beta_1^*}{(2\alpha_1-1)(2\beta_1^*+1)}&\leq \frac{1}{2}\left(\frac{x_0+1}{x_0+3}+\frac{x_3+1}{3x_3+1}\right) \\
			&=\frac{1}{2}\left(1-\frac{2}{x_0+3}+\frac{1}{3}+\frac{2}{9x_3+3} \right)<\left(\frac{2}{3}-\frac{1}{x_0}+\frac{1}{9x_3}\right)
		\end{align*}
		and $$\frac{\alpha_2+\beta_2^*}{(2\alpha_2-1)(2\beta_2^*+1)}\leq \frac{1}{2}\left(\frac{x_3+1}{2x_3+3}+\frac{x_0+1}{3x_0+5}\right)<\frac{5}{12}.$$
	\end{itemize}
	
	This implies that, if $x_{3n}\geq k_0$ for every $n$, except for $\frac{\alpha_2+\beta_2^*}{(2\alpha_1-1)(2\beta_1^*+1)}$ which is asymptotic to $\frac{5}{12}$, the expressions used in the definition of $\hat{f}$ are asymptotic to $\frac{2}{3}$.

	If we consider the subshift $S(k)$ as the subshift of $\Sigma$ generated by the words $(1,1,x), k^2\leq x\leq k^3$, the Lagrange values of elements $\theta\in S(k)$ will be close to $\frac{2}{3}$. We can also consider an accelerated version of this subshift, the map $\sigma^3:\Xi(k)\to \Xi(k)$ over the set $\Xi(k)=\{(\cdots,x_{-3},1,1;x_0,1,1,x_3,1,1,\cdots)|k^2\leq x_{3n}\leq k^3\}\subset S(k)$. The dynamics of $(\sigma^3,\Xi(k))$ is topologically conjugated to the complete subshift on the alphabet $A=\{(1,1,x)|k^2\leq x\leq k^3\}$. 
	
	Summarizing what we have just done in those terms, for every $\varepsilon>0$, there is $k_0\in \mathbb{N}$ such that for every $k\geq k_0$, $L_{\sigma^3,\hat{f},\Xi(k)}\subset L_2^*\cap(-\infty,\frac{2}{3}+\varepsilon)$.
	
	Now, if we define for $k\in \mathbb{N}$ the Gauss-Cantor set $X(k)=\{[0;1,1,x_3,1,1,x_6,\cdots]|k^2\leq x_{3n}\leq k^3\}$ and $\Lambda(k)=G^2(X(k))\times X(k)$, then $\Lambda(k)$ is a conservative horseshoe for $T^3$, with $HD(\Lambda(k))=2HD(X(k))$. Considering the function $\tilde{f}:\Lambda(k)\to \mathbb{R}$ defined by 
	\begin{align*}
		&\tilde{f}(x,y)=\\
		&\max\Bigg\{\frac{\frac{1}{x}+y}{(\frac{1}{x}-1)(y+1)},\frac{\frac{1}{x}+y}{(\frac{1}{x}-\lfloor \frac{1}{x}\rfloor+1)(\lfloor \frac{1}{x}\rfloor-1+y)},\frac{\frac{1}{x_1}+y_1}{(\frac{2}{x_1}-1)(2y_1+1)},\frac{\frac{1}{x_2}+y_2}{(2\frac{1}{x_2}-1)(2y_2+1)}\Bigg\},
	\end{align*}
	where for $x=[0;a_0,a_1,a_2,\cdots]$ and $y=[0;a_{-1},a_{-2},a_{-3},\cdots]$, $x_1=[0;a_{1},a_2,a_3\cdots],y_2=[0;a_0,a_{-1},a_{-2}\cdots],x_2=[0;a_2,a_3,a_4\cdots],y_2=[0;a_1,a_0,a_{-1},\cdots]$, we have that $L_{T^3,\tilde{f},\Lambda(k)}=L_{\sigma^3,f^*,\Xi(k)}\subset L_2^*\cap(-\infty,\frac{2}{3}+\varepsilon)$. As $(x_j,y_j)=T^j(x,y)$ for $j=1,2$ and $DT$ preserves the coordinate axes and as each of the rational functions used in the definition of $f$ are nonlinear Möbius maps when either $x\in G^2(X(k))$ or $y\in X(k)$ is fixed, the partial derivatives of each of them in the coordinate directions does not vanish for $(x,y)\in \Lambda(k)$. Therefore, the data $T^3,f,\Lambda(k)$ satisfies the hypotheses of \Cref{cor:continuity}.
	
	To conclude the proof, we just need the following:
	\begin{lemma}
		For $k\in \mathbb{N}$ large enough, $HD(X(k))>\frac{1}{2}$.
	\end{lemma}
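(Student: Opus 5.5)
We need a lower bound on the dimension of $X(k)$. It is the limit set of the iterated function system with maps $\psi_x(t)=[0;1,1,x+t]$, where $k^2\le x\le k^3$ and $t\in[0,1]$. Each $\psi_x$ is a Möbius map. Writing $t'=x+t$, we have
\[
[0;1,1,t']=\frac{1}{1+\frac{1}{1+1/t'}}=\frac{t'+1}{2t'+1},
\]
so
\[
\psi_x'(t)=-\frac{1}{(2(x+t)+1)^2}.
\]
The plan is to apply the standard pressure/mass-distribution lower bound for conformal IFS satisfying the open set condition. The images $\psi_x([0,1])$ have disjoint interiors, since they are distinct cylinders of the continued fraction expansion. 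With bounded distortion, this gives $HD(X(k))\ge s$ whenever
\[
\sum_{x=k^2}^{k^3}\Bigl(\min_{t\in[0,1]}|\psi_x'(t)|\Bigr)^{s}\ge 1 .
\]
I would cite this in the form: if every map of a finite IFS of injective contractions with disjoint images of $[0,1]$ has contraction ratio at least $r_x$, and $\sum r_x^s\ge1$, then $HD\ge s$. One can see this directly by comparing with a self-similar subsystem, or by a mass distribution argument via Moran's equation.

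Now take $s=\frac12+\eta$ for a small fixed $\eta>0$, say $s=0.55$. The lower bound on the derivative is $\min|\psi_x'|\ge (2x+3)^{-2}\ge (3x)^{-2}$ for $x\ge 3$. Therefore
\[
\sum_{x=k^2}^{k^3}(3x)^{-2s}\ \ge\ 3^{-2s}\int_{k^2}^{k^3}(u+1)^{-1-2\eta}\,du\ \approx\ \frac{3^{-2s}}{2\eta}\bigl(k^{-4\eta}-k^{-6\eta}\bigr).
\]
This is the only delicate point. Because the exponent $2s$ is slightly larger than $1$, the sum is only a truncated logarithmic-type sum, so the choice of $\eta$ must depend on $k$. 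Take $\eta=\frac{c}{\log k}$. Then $k^{-4\eta}=e^{-4c}$ and $k^{-6\eta}=e^{-6c}$, and the sum is at least roughly
\[
\frac{\log k}{2c}\cdot 3^{-1-2\eta}\bigl(e^{-4c}-e^{-6c}\bigr),
\]
which tends to infinity with $k$. In fact one may simply fix a small $\eta$, such as $\eta=0.01$, and note that $\frac{1}{2\eta}\cdot\frac{1}{9}\cdot\bigl(k^{-4\eta}-k^{-6\eta}\bigr)$ is positive but eventually decays, so it exceeds $1$ only in a range of $k$. The cleaner choice is therefore $\eta$ depending on $k$ as above. This gives $HD(X(k))\ge \frac12+\frac{c}{\log k}>\frac12$ for all large $k$, which is what the lemma asserts; no uniform gap is needed.

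The main obstacle is making the IFS dimension lower bound rigorous with the minimum-derivative estimate. Since the $\psi_x$ are not affine, I would handle this by the distortion-free comparison. At level $n$, each cylinder $\psi_{x_1}\circ\cdots\circ\psi_{x_n}([0,1])$ has length at least $\prod_i r_{x_i}$ with $r_x=(2x+3)^{-2}$, by the chain rule and the pointwise bound $|\psi_x'|\ge r_x$ everywhere on $[0,1]$. Distinct level-$n$ cylinders are disjoint intervals, with gaps between siblings comparable to their lengths, because consecutive digits $x$ give separated intervals. One then places the Bernoulli measure with weights $p_x=r_x^{s}/\sum_y r_y^{s}$, and the mass distribution principle yields $HD\ge s$ once $\sum_x r_x^s\ge1$. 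Verifying the gap condition needed for the mass distribution principle, namely that a small interval meets boundedly many cylinders of comparable size, is routine for continued fraction cylinders, since neighboring cylinders with digits $x$ and $x+1$ have comparable lengths.
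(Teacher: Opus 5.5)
Your proposal is correct and is essentially the paper's argument: the level-one lower bound $HD(X(k))\ge s$ whenever $\sum_x(\min|\psi_x'|)^s\ge 1$ (the paper takes this from Palis--Takens as $HD\ge\gamma_1$), combined with $|\psi_x'|\asymp(2x+1)^{-2}$ and the growth $\sum_{k^2\le x\le k^3}(2x+1)^{-1}\sim\frac12\log k$. The only difference is cosmetic: the paper argues by contradiction ($\gamma_1\le\frac12$ would make $\sum(2j+1)^{-2\gamma_1}$ unbounded), while you exhibit $s=\frac12+c/\log k$ explicitly, which also gives the quantitative bound $HD(X(k))\ge\frac12+c/\log k$ (your remark that sibling cylinders have gaps comparable to their lengths is inaccurate, since the intervals $\psi_{x_1}\circ\cdots\circ\psi_{x_n}([0,1])$ actually abut, but this is harmless because in one dimension the lower bound on their lengths already bounds how many of them meet a small interval).
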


	The proof of this lemma will be done in the next section. This lemma implies that $HD(\Lambda(k))>1$. Therefore, we have that for every $\varepsilon$, 
	\begin{align*}
		d_2^*\left(\frac{2}{3}+\varepsilon\right)=HD(L_2^*\cap (-\infty,\frac{2}{3}+\varepsilon))&\geq HD(L_{\sigma^3,\hat{f},\Xi(k)})\\
		&=HD(L_{T^3,\tilde{f},\Lambda(k)})=\min\{1,HD(\Lambda(k))\}=1.
	\end{align*}

	The discontinuity of the dimension function in this case goes in opposition to \Cref{thm:main} and to the case of $L_2$. Let us clarify why the results are different. What we did was to construct a smooth by parts function $\hat{f}$ and, for every $\varepsilon>0$, a horseshoe $\Lambda=\Lambda(k)$ for the map $T^3$ such that $HD(\Lambda(k))>1$ and $L_{T^3,\tilde{f},\Lambda(k)}\subset L_2^*\cap [-\infty,\frac{2}{3}+\varepsilon)$. However, what is not true is that for some fixed horseshoe $\Lambda$ and some $\varepsilon>0$ we have $L_2^*\cap (-\infty,\frac{2}{3}+\varepsilon)=L_{T^3,\tilde{f},\Lambda}$, because otherwise the function $d_2^*$ would have to be continuous. We can readily see that all the $\Lambda(k)$ for $k\geq k_0$ cannot be all contained in a same horseshoe defined by $T$, because the associated shift would need to have an infinite number of symbols, and therefore would not be compact.
	
	We can see more explicitly how to construct subsets of $L_2^*\cap (-\infty,\frac{2}{3}+\varepsilon)$ of Hausdorff dimension 1 by a proof similar to \cite[Lemma 3]{M3}. Given $\varepsilon>0$, let $k_0$ be large enough such that, for every $\underline{\theta}=(\cdots,1,1,x_{-3},1,1;x_0,1,1,x_3,\cdots)$ with $x_{3n}\geq k_0$, $\hat{f}(\underline{\theta})\leq \frac{2}{3}+\varepsilon$. Consider one such $\underline{\theta}$ such that $x_{-3}=2k_1,x_0=k_0,x_3=k_1$, where $k_1=\lfloor k_0^{3/2}\rfloor$ and all other positions are greater or equal to $k_0^2$. By the earlier estimates, we have that 
	\begin{align*}
		\frac{\alpha_1+\beta_1^*}{(2\alpha_1-1)(2\beta_1^*+1)}&< \frac{2}{3}-\frac{1}{k_0}+\frac{1}{9k_1}<\frac{2}{3}<\frac{2}{3}+\frac{1}{k_0}-\frac{1}{9k_1}+\eta' \\
		&=\frac{\alpha_0+\beta_0^*}{(a_0-1+\beta_0^*)(\alpha_0-a_0+1)} \\
		&<\frac{2}{3}+\frac{1}{k_0}-\frac{1}{18k_1}+\eta=\frac{\alpha_0+\beta_0^*}{(\alpha_0-1)(\beta_0^*+1)}
	\end{align*}
	for $|\eta|<\frac{1}{k_0^2}+\frac{1}{36k_1^2}, |\eta'|<\frac{1}{k_0^2}+\frac{1}{9k_1^2}$, so 
	\begin{align*}
		\hat{f}(\underline{\theta})=\frac{\alpha_0+\beta_0^*}{(\alpha_0-1)(\beta_0^*+1)}&\in \left[\frac{2}{3}+\frac{1}{k_0}-\frac{1}{18k_1}-\frac{1}{k_0^2}-\frac{1}{36k_1^2},\frac{2}{3}+\frac{1}{k_0}-\frac{1}{18k_1}+\frac{1}{k_0^2}+\frac{1}{36k_1^2}\right]\\
		&\subset \left[\frac{2}{3}+\frac{1}{2k_0}-\frac{1}{9k_1},\frac{2}{3}+\frac{1}{2k_0}+\frac{1}{9k_1}\right]
	\end{align*}
	Regarding now $\sigma^3(\underline{\theta})$ and $\sigma^{-3}(\underline{\theta})$, we have that 
	$$\frac{\alpha_4+\beta_4^*}{(2\alpha_4-1)(2\beta_4^*+1)}<\frac{2}{3}-\frac{1}{k_1}+\frac{1}{9x_6}<\frac{2}{3},$$ $$\frac{\alpha_{-1}+\beta_{-1}^*}{(2\alpha_{-1}-1)(2\beta_{-1}^*+1)}<\frac{2}{3}-\frac{1}{2k_1}+\frac{1}{9k_0}<\frac{2}{3}+\frac{1}{2k_0}-\frac{1}{9k_1}<\frac{\alpha_0+\beta_0^*}{(\alpha_0-1)(\beta_0^*+1)},$$ and $$\frac{\alpha_3+\beta_3^*}{(\alpha_3-1)(\beta_3^*+1)},\frac{\alpha_3+\beta_3^*}{(a_3-1+\beta_3^*)(\alpha_3-a_3+1)},\frac{\alpha_{-3}+\beta_{-3}^*}{(\alpha_{-3}-1)(\beta_{-3}^*+1)},\frac{\alpha_{-3}+\beta_{-3}^*}{(a_{-3}-1+\beta_{-3}^*)(\alpha_{-3}-a_{-3}+1)}$$ are smaller than $$\frac{2}{3}+\frac{1}{k_1}<\frac{2}{3}+\frac{1}{2k_0}-\frac{1}{9k_1}<\frac{\alpha_0+\beta_0^*}{(\alpha_0-1)(\beta_0^*+1)}.$$ For all other positions, the terms $x_{3n_0-3},x_{3n_0},x_{3n_0+3}$ will be at least $k_1=\lfloor k_0^{3/2}\rfloor $, so all the corresponding expressions will also be smaller than $\frac{\alpha_0+\beta_0^*}{(\alpha_0-1)(\beta_0^*+1)}$. Therefore, we have that for any $\underline{\theta}=(\cdots,1,1,x_{-3},1,1;x_0,1,1,x_3,\cdots)$ with $x_{3n}\geq k_0$ with $x_{-3}=k_1,x_0=k_0,x_3=2k_1,k_1=\lfloor k_0^{3/2}\rfloor$ and $x_{3n}\geq k_0^2$ for $|n|>1$, $\sup_{n\in \mathbb{Z}}\hat{f}(\sigma^{3n}(\underline{\theta}))=\hat{f}(\underline{\theta})=\frac{\alpha_0+\beta_0^*}{(\alpha_0-1)(\beta_0^*+1)}$. With the same proof, given such $\underline{\theta}$, considering $$\underline{\theta}'= (\cdots,\tau_{n},\tau_{n-1},\cdots,\tau_1;\tau_0,\tau_1,\cdots,\tau_n,\cdots),$$ where $$\tau_n=(x_{-3(n+1)},1,1,x_{-3n},1,1,\cdots,x_{-6},1,1,k_1,1,1,k_0,1,1,2k_1,1,1,x_6,\cdots,x_{3n},1,1,x_{3(n+1)},1,1),$$ the positions $n$ where $\limsup_{n\to \infty}\hat{f}(\sigma^{3n}(\underline{\theta}'))$ is realized are the positions of each block $\tau_n$ corresponding to $k_0$, and any sequence $\hat{\tau_n}\in \Sigma$ with central positions coinciding with $\tau_n$ having the value $k_0$ in the zero position is such that $\hat{\tau_n}\to \underline{\theta}$. Therefore, $\ell_{\sigma^3,\hat{f}}(\underline{\theta}')=\limsup_{n\to \infty}\hat{f}(\sigma^{3n}(\underline{\theta}'))=\hat{f}(\underline{\theta})=\frac{\alpha_0+\beta_0^*}{(\alpha_0-1)(\beta_0^*+1)}=\frac{1}{\alpha_0-1}+\frac{1}{\beta_0^*+1}$. This means that, for a large enough $k_0$, considering the regular Cantor set $X(k_0)$  as defined before and the embeddings $h,g:X(k_0)\to \mathbb{R}$ defined by $h(x)=[0;k_0-1,1,1,k_1,1,1,x_3,1,1,x_6,\cdots]$ and $g(x)=[0;1,1,1,2k_1,1,1,x_6,\cdots]$ for $x=[0;1,1,x_3,1,1,x_6,\cdots]$, then $h(X(k_0))+g(X(k_0))\subset L_2^*\cap[\frac{2}{3},\frac{2}{3}+\frac{1}{k_0})$. As $h,g$ are diffeomorphisms of $X(k)$ with its images, by the dimension formula for arithmetic sums of regular Cantor sets, we have that 
	\begin{align*}
		HD(h(X(k_0))+g(X(k_0)))&=\min\{1,h(HD(X(k_0)))+g(HD(X(k_0)))\} \\
		&=\min\{1,2HD(X(k_0))\}=1
	\end{align*}
	
	\subsection{Structure of $L_2^*$ near $\frac{2}{3}$}
	
	Until now, we have seen that the numbers $\alpha$ equivalent to elements of $X(k)$ for large $k$ are such that $k_2^*(\alpha)\in [\frac{2}{3},\frac{2}{3}+\frac{1}{k}]$. We will now show a converse for that statement: that if an irrational number $\alpha$ has $k_{2}^{*}$ value very close to $\frac{2}{3}$ (belonging to $[\frac{2}{3},\frac{2}{3}+\delta]$ for a suitable small $\delta >0$), then $\alpha$ is equivalent to a number of the form $[0;1,1,x_{3},1,1,x_6,\cdots]$, where $x_{3n}\geq 22$ for all $n\geq 1$. For this, we use the other formula for $k_2^*(\alpha)$.
	
	\begin{lemma}
		If $\alpha\in\R\setminus\Q$ is such that $k_2^{*}(\alpha)\in[\frac{2}{3},\frac{\sqrt{493}}{33})$. Then $\alpha$ is equivalent to $[0;1,1,x_{3},1,1,x_6,\cdots]$ where $x_{3n}\geq 145$ for all $n\geq 1$. Moreover, these constants are optimal because 
		\begin{equation*}
			k_2^{*}([0;\overline{1,1,1,1,21}])=\frac{\sqrt{493}}{33}=0.67283646\dots
		\end{equation*}
		\begin{equation*}
			k_2^*([0;\overline{1,1,145}])=\frac{\sqrt{21317}}{217}=0.67282684\dots<\frac{\sqrt{493}}{33}.
		\end{equation*}
	\end{lemma}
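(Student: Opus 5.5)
Write $\alpha=[a_0;a_1,a_2,\dots]$ and use $k_2^*(\alpha)=\limsup_k \gamma(k)$ from the formula proved above. The plan is to show that every local digit configuration other than the pattern $\dots,1,1,x,1,1,x',\dots$ with large $x$ forces some term $\kappa_1,\kappa_2$ or $\kappa_3$ to be $\ge \frac{\sqrt{493}}{33}$ (up to $o(1)$) infinitely often. The lemma's threshold is $c:=\frac{\sqrt{493}}{33}\approx 0.672836$, and we assume $k_2^*(\alpha)<c$.

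\textbf{Step 1: runs of $1$'s.} The periodic point $[0;\overline{1}]$ gives $\kappa_3\to\frac{\sqrt5}{5}\approx0.447$, which is small, so $1$'s alone are harmless. What we need is a bound on the large digits.
\begin{itemize}
\item If $a_{k+1}=m\ge2$, then $\kappa_1(k)=\frac{1}{\alpha_{k+1}-1}+\frac{1}{1+\beta_{k+1}}$.
\item If $a_k=1$ then $\beta_{k+1}\ge\frac12$; if $a_k\ge2$ then $\beta_{k+1}<\frac12$, giving $\frac{1}{1+\beta_{k+1}}>\frac23$ and hence $\kappa_1>\frac23+\frac{1}{m}$.
\item The analogous statement for $\kappa_2$ uses the symmetry $(\alpha,\beta)\leftrightarrow(\beta+a,\alpha-a)$ already exploited above.
\end{itemize}
So two consecutive digits $\ge2$ give a value $\ge\frac23+\frac1{m+1}$. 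We will refine this with the exact expressions and show that eventually no two digits $\ge 2$ are adjacent. Next we handle runs of exactly one $1$, or of three or more $1$'s, between large digits. Here the expansions $\beta=[0;1,x,\dots]$ and $[0;1,1,1,\dots]$ put $\frac1{1+\beta}$ near $\frac12$ or near $\frac{1}{1+0.6}\approx0.62$. We will compute them carefully against the $\kappa_3$ term at the $1$'s adjacent to the big digit. The aim is to show that only runs of length exactly two (giving $\beta\approx\frac{x+1}{2x+1}\approx\frac12$, so $\frac1{1+\beta}\approx\frac23$) and runs of length four survive below $c$.

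\textbf{Step 2: excluding runs of four $1$'s.} The extremal example $[0;\overline{1,1,1,1,21}]$ is exactly this case. We will show that a block $1,1,1,1,x$ occurring infinitely often forces a limsup $\ge c$ for every $x$: some term is decreasing in $x$ and another is increasing in $x$, and their maximum is minimised at $x=21$ with value $c$. This requires exact Möbius evaluations of $\kappa_1,\kappa_2,\kappa_3$ for the periodic words, together with monotonicity in the neighbouring tail digits. The monotonicity reduces the general case to comparing with periodic extremes, following the standard Markov-type comparison argument via alternating order of continued fractions.

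\textbf{Step 3: forcing $x_{3n}\ge145$.} Once eventual periodic structure $1,1,x$ is established, the estimates in the previous subsection give
\[
\kappa_1(k)=\tfrac1{x_0}+\tfrac23-\tfrac1{9x_{-3}}+\eta .
\]
For the purely periodic word we have $k_2^*([0;\overline{1,1,x}])=\frac23+\frac1x-\frac1{9x}+O(x^{-2})$. Solving $k_2^*([0;\overline{1,1,x}])<c$ exactly, using the closed form of the periodic quadratic irrational, yields $x\ge145$, with $x=145$ giving $\frac{\sqrt{21317}}{217}$. For a non-periodic sequence with some $x_{3n}\le144$ infinitely often, monotonicity of $\frac1{\alpha_0-1}+\frac1{\beta_0^*+1}$ in the neighbouring digits reduces the problem to the worst case where the neighbours are as large as possible. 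Since $\frac1{9x_{-3}}$ is then tiny, the value is at least $\frac23+\frac1{144}-\varepsilon>c$, because $\frac1{145}\cdot\frac89\approx0.00613$ and $c-\frac23\approx0.00617$. I expect the main obstacle to be this step together with Step 2: the margins are about $10^{-5}$, so the "limsup over neighbours" must be computed exactly, not asymptotically. The optimality claims then reduce to direct evaluation of the two periodic quadratic irrationals.
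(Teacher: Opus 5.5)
There is a genuine gap, most concretely in Step 3. You reduce a position with $x_0\le 144$ to ``the worst case where the neighbours are as large as possible.'' The monotonicity runs the other way. Both $\frac{1}{1+\beta_0^*}=[0;1,1,1,x_{-3},\dots]$ and $\frac{1}{\alpha_0-1}=[0;x_0-1,1,1,x_3,\dots]$ are \emph{increasing} in the neighbouring large digits; in your own expansion, $-\frac{1}{9x_{-3}}$ is most negative when $x_{-3}$ is small. So large neighbours maximise $\kappa_1$ rather than bound it from below. The claimed bound $\frac23+\frac1{144}-\varepsilon$ is in fact false at an arbitrary such position: for $x_0=144$ with $x_{\pm3}=21$ one gets $\kappa_1,\kappa_2\approx 0.6686$ and $\kappa_3\approx 0.665$, all below $c$. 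The missing idea, which is what the paper does, is to set $k=\liminf_j x_{3j}$ and look only at positions where $x_{3j}=k$. There both neighbours are eventually $\ge k$, so
\[
\kappa_1>[0;k-1,1,1,k,2]+[0;1,1,1,k,2]\ge[0;143,1,1,144,2]+[0;1,1,1,144,2]>0.67286>c
\]
whenever $k\le 144$.

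Steps 1 and 2 also leave out the mechanism that drives the paper's proof. Impose $\kappa_1<\frac{2}{3-\epsilon}$ and $\kappa_3<\frac{2}{3-\epsilon}$ at the \emph{same} large digit $a_m$. This traps $\beta_m$ between $\frac{(1-\epsilon)\alpha_m+2}{2\alpha_m-5+\epsilon}$ and $\frac{(1+\epsilon)\alpha_m+8}{2\alpha_m+7-\epsilon}$, which is possible only if $\epsilon(4\alpha_m^2+4\alpha_m+10)>54$, i.e.\ $\alpha_m>21.565$ for large $m$ once $k_2^*(\alpha)\le c$. Together with $\kappa_3$ being increasing, this one inequality makes every large digit $\ge 21$ and leaves only runs of exactly two or four $1$'s between large digits. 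Your bound $\frac23+\frac1m$ alone never excludes large $m$.

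For Step 2, ``monotonicity reduces to periodic extremes'' is not a valid reduction. $\kappa_1$ is decreasing and $\kappa_3$ increasing in $(\alpha_m,\beta_m)$, and at the periodic point of $\overline{1,1,1,1,21}$ all three $\kappa_i$ equal $c$. Pushing tails to an extreme lowers one term while raising the other, so it cannot bound their maximum from below. The paper instead propagates forward:
\begin{enumerate}
\item Four $1$'s before $a_m$ give $\beta_m>0.6$, which forces $a_m=21$ (otherwise $\kappa_3>0.6734$).
\item This in turn forces four $1$'s after $a_m$ (otherwise $\alpha_m<[21;1,1,21]<21.565$).
\item Hence the tail is $\overline{21,1,1,1,1}$, so $k_2^*(\alpha)=c$, which the hypothesis excludes.
\end{enumerate}
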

	
	\begin{proof}
		Suppose that for $\alpha_0=[a_0;a_1,\cdots]$ the $k_2^*(\alpha)<\frac{2}{3-\epsilon}$, for some small $\epsilon>0$. Then, if $m\in\N$ is such that $a_{m}>1$, we should have, for $\alpha=\alpha_{m}=[a_m;a_{m+1},\cdots], \beta=\beta_{m}=[0;a_{m-1},\cdots,a_1]$, the inequalities 
		
		$$\frac{\alpha+\beta}{(\alpha+2)(2-\beta)}<\frac{2}{3-\epsilon},$$
		
		and
		
		$$\frac{\alpha+\beta}{(\alpha-1)(1+\beta)}<\frac{2}{3-\epsilon},$$ provided $m$ is large. 
		
		The first inequality implies $(3-\epsilon)\alpha+(3-\epsilon)\beta<8+4\alpha-4\beta-2\alpha\beta$,so $(2\alpha+7-\epsilon)\beta<(1+\epsilon)\alpha+8$, and thus $$\beta<\frac{(1+\epsilon)\alpha+8}{2\alpha+7-\epsilon}$$
		and the second inequality implies $(3-\epsilon)\alpha+(3-\epsilon)\beta<2\alpha-2\beta-2+2\alpha\beta$, so $(2\alpha-(5-\epsilon))\beta>(1-\epsilon)\alpha+2$ and thus
		$$\beta>\frac{(1-\epsilon)\alpha+2}{2\alpha-(5-\epsilon)}.$$

		This implies 
		$$\frac{(1-\epsilon)\alpha+8}{2\alpha-(5-\epsilon)}<\frac{(1+\epsilon)\alpha+2}{2\alpha+7-\epsilon},$$
		so 
		$$\epsilon(4\alpha^{2}+4\alpha+10)>54,$$
		so
		$$\epsilon> \frac{54}{4\alpha^{2}+4\alpha+10}=\frac{54}{(2\alpha+1)^{2}+9},$$
		and
		$$2\alpha+1>\sqrt{\frac{54}{\epsilon}-9}.$$

		If we take $\epsilon=\frac{3}{113}$,this gives $2\alpha+1>\sqrt{18\times113-9}=45$, and thus $\alpha>22$, so $a_{m}=\lfloor\alpha\rfloor\geq22$. Moreover, since 
		$$\beta<\frac{(1+\epsilon)\alpha+8}{2\alpha+7-\epsilon}$$
		which is decreasing in $\alpha$, we have 
		$$\beta<\frac{(1+\frac{3}{113})\times22+8}{44+7-\frac{3}{113}}=\frac{3}{5}=[0;1,1,2]$$
		which implies that the continued fraction of $\beta$ starts with $[0;1,1,k,...]$, with $k>1$.
		
		If we call a large coefficient any coefficient $a_{m}\geq2$, this implies that, for $m$ large, if $a_{m}$ is large than $a_{m}\geq22,a_{m-1}=1,a_{m-2}=1$ and $a_{m-3}$ is large, which implies the structure described in the beginning, provided that $k_{2}^{*}(\alpha)
		<\frac{2}{3-\frac{3}{113}}=\frac{113}{168}=0.672619...$ and the continued fraction of $\alpha$ does not end with infinitely many coefficients equal to 1 (in this case, the $k_2^*(\alpha)$ would be $\frac{\sqrt{5}}{5}<\frac{1}{2}<\frac{2}{3}$).
		
		It is possible to determine the smallest element no less than $\frac{2}{3}$ of $L_2^*$,which is the $k_2^*$ value of an irrational number $\alpha$ whose continued fraction representation does not have the structure describe above: it is $\frac{\sqrt{493}}{33}=0.67283646...=k([0;\overline{1,1,1,1,21}])$.
		
		Notice that $\frac{\sqrt{493}}{33}=\frac{2}{3-\epsilon}$, with $\epsilon=0.027509...$, so as before, if the $\mathbb{L}_{2}^{*}$ value of $\alpha$ is $\leq \frac{\sqrt{493}}{33}$ then if $m$ is large and $a_{m}\geq2$, we should have
		$$2\alpha+1=2\alpha_{m}+1\geq \sqrt{\frac{54}{\epsilon}-9}>\sqrt{\frac{54}{0.0276}-9}>44.13,$$ so $\alpha=\alpha_{m}>21.565$, and in particular, $a_{m}\geq21$.
		
		Now, if $m$ is large and $a_{m}\geq2$, let $n\geq1$ be minimum such that $a_{m-n}\geq2$.Then, if $n$ is even or $n\geq6$, then $\beta_{m}\geq[0;1,1,1,1,1,1]>0.615$, and thus, since $\frac{\alpha_{m}+\beta_{m}}{(\alpha_{m}+2)(2-\beta_{m})}$ is increasing in $\alpha_{m}$ and in $\beta_{m}$,
		$$\frac{\alpha_{m}+\beta_{m}}{(\alpha_{m}+2)(2-\beta_{m})}>\frac{21.5+0.615}{23.5\times(2-0.615)}>0.679>\frac{\sqrt{493}}{33}$$
		a contradiction for $m$ large. Therefore $n=3$ or $n=5$, since 
		$$\beta_{m}=\beta>\frac{(1-\epsilon)\alpha+2}{2\alpha-(5-\epsilon)}>\frac{1-\epsilon}{2}>\frac{1}{3}, \hspace{0.5cm}\left(\epsilon<\frac{1}{3}\right)$$
		so $\beta_{m}=[0;a_{m-1},...]>\frac{1}{3}$, and thus $a_{m-1}\leq2$, and therefore $a_{m-1}=1$ (since, for $m$ large, $a_{m-1}\geq2$ implies $a_{m-1}\geq21$), if $n$ is always equal to $3$ then we are done.
		
		We will show that if $n\geq5$ for $m$ large then $a_{m}=21$,$a_{m+5j}=21,\forall j\in \mathbb{N}$ and $a_{n}=1$ if $n\geq m$ and $n\not\equiv m \pmod 5$. It's enough to prove that $a_{m+1}=a_{m+2}=a_{m+3}=a_{m+4}=1$, and $a_{m}=a_{m+5}=21$, and then use induction. First of all, if $a_{m}>21$ then $\alpha=\alpha_{m}>[22,1,1]=22.5$ and $\beta_{m}>[0;1,1,1,1]=0.6$, so
		$$\frac{\alpha_{m}+\beta_{m}}{(\alpha_{m}+2)(2-\beta_{m})}>\frac{22.5+0.6}{24.5\times1.4}>0.6734>\frac{\sqrt{493}}{33}$$
		contradiction for $m$ large.
		
		Therefore $a_{m}=21$. If we don't have $a_{m+1}=a_{m+2}=a_{m+3}=a_{m+4}=1$, then $a_{m+3}\geq2$. In this case, we should have $\alpha=\alpha_{m}<[21;1,1,21]<21.512<21.565$, a contradiction. This implies one claim.
		
		Now suppose that the $k_2^*(\alpha)$ belongs to $[\frac{2}{3},\frac{\sqrt{493}}{33})$. Then the structure of the continued factor of $\alpha$ is $a_{m},1,1,a_{m+3},1,1,a_{m+6},...,$ with $a_{m+3j}\geq21,\forall j\geq0$,for $m$ large enough. Let $$k=\liminf_{j\rightarrow \infty}a_{m+3j}.$$ 
		
		If $k=+\infty$ then $k_2^*(\alpha)$ is equal to $\frac{2}{3}$. Otherwise $k\geq21$, $k\in \mathbb{N}$, and for $n=m+6j$ large with $a_{n}=k$, we have
		\begin{align*}
			\frac{\alpha_{n}+\beta_{n}}{(\alpha_{n}-1)(1-\beta_{n})}&=\frac{1}{\alpha_{n}-1}+\frac{1}{1+\beta_{n}}\\&=[0;k-1,1,1,a_{n+3},1,1,...]+[0;1,1,1,a_{n-3},1,1,...]\\&>[0;k-1,1,1,k,2]+[0;1,1,1,k,2]
		\end{align*}
		If $k\leq144$, this is $\geq [0;143,1,1,k,2]+[0;1,1,1,144,2]>0.67286>\frac{\sqrt{493}}{33}$, a contradiction. So,indeed the structure of the continued fraction of $\alpha$ is $a_{m},1,1,a_{m+3},1,1,a_{m+6},...,$ with $a_{m+3j}\geq145,\forall j\geq0$, for all $m$ large enough with $a_{m}\geq2$, provided that $k_2^*(\alpha)\in [\frac{2}{3},\frac{\sqrt{493}}{33})$.
		
		This lower estimate of $145$ cannot be improved, since for $\alpha=[0;\overline{1,1,145}]$, since  $k_2^*(\alpha)=\frac{\sqrt{21317}}{217}=0.67282684<\frac{\sqrt{493}}{33}$.
		
	\end{proof}

	\subsection{Estimates on the Hausdorff dimension of $X(k)$}
	
	We now prove that $HD(X(k))>\frac{1}{2}$ for every sufficiently large $k$.
	We will use the dimension estimates for regular Cantor sets of Palis-Takens \cite[Page 68]{PT}.
	Given a Markov partition $\mathcal{R}^{1}=\{K_{1},...,K_{m}\}$ for a regular Cantor set and for $n\geq2$, let $\mathcal{R}^{n}$ denote the set of connected components of $\Psi^{-(n-1)}(K_{i}),K_{i}\in\mathcal{R^{1}}$ where $\Psi$ is the associated map. For $R\in\mathcal{R}^{n}$ take $\Lambda_{n,R}=\sup|(\Psi^{n})'|_{R}|$. Define $\gamma_{n}>0$ by 
	$$\sum_{R\in\mathbf{R}^{n}}(\Lambda_{n,R})^{-\gamma_{n}}=1$$
	We have that $HD(X(k))>\gamma_{n} \forall n\geq1$. So we can estimate $HD(X(k))$ by $\gamma_{n}$.
	
	Notice $X(k)$ is a regular Cantor set and the $\Psi$ for it is $G^{3}$,the third iterate of Gauss map,take the Markov partition as $\{R_{j}=\{[0;1,1,j,...]\}|k^{2}\leq j\leq k^{3}\}$.
	$$\Psi|_{R_{j}}(x)=\frac{q_{3}^{j}x-p_{3}^{j}}{-q_{2}^{j}+p_{2}^{j}}$$
	so
	$$\Psi'|_{R_{j}}(x)=\frac{(-1)^{n}}{(-q_{2}^{j}x+p_{2}^{j})^{2}}$$
	where $\dfrac{p^{j}_k}{q^{j}_k} = [0;b_{1}^{j},...,b_{k}^{j}]$ and $(1,1,j) = (b^{j}_{1}, b^{j}_{2}, b^{j}_{3})$,so $q_{3}^{j}=jq_{2}^{j}+q_{1}^{j}=2j+1$.
	
	To use the estimate from Palis-Takens, we use a classical estimate on the derivative of iterates of the Gauss map (cite Geometric properties of Lagrange and Markov spectra), which in our context translates as
	$(q_{3}^{j})^{2}\leq|\Psi'|_{R_{j}}|\leq4(q_{3}^{j})^{2}$. Then, as each branch $\Psi|_{R_i}$ is onto, we have that $HD(X(k))\geq \gamma_1=\gamma_1(k)$, where $\gamma_1$ is defined by
	$$\sum_{k^{2}\leq j\leq k^{3}}\sup|\Psi'|_{R_{j}}|^{-\gamma_{1}}=1$$
	This means that
	$$\sum_{k^2\leq j\leq k^3}(4q_3^j)^{-2\gamma_1}=16^{-\gamma_1}\sum_{k^{2}\leq j\leq k^{3}}(2j+1)^{-2\gamma_{1}}\leq1,\forall k$$
	and
	$$\sum_{k^2\leq j\leq k^3}(2j+1)^{-2\gamma_1}\leq 16^{\gamma_1}$$.
	
	But it is well known that
	$$\sum_{k^2\leq j\leq k^3}(2j+1)^{-1}$$
	is asymptotic to 
	$\frac{1}{2}(\log(k^3)-\log(k^2))=\frac{1}{2}\log(k)$. On the other hand, we have $\gamma_1(k)\leq HD(X(k))\leq 1$, which implies that $$\sum_{k^2\leq j\leq k^3}(2j+1)^{-2\gamma_1}\leq 16^{\gamma_1}$$ is uniformly bounded on $k$, and therefore $\gamma_1$ must be larger than $\frac{1}{2}$ if $k$ is large enough.

	\bibliographystyle{plain}
	\bibliography{bibliography}
	
\end{document}